\let\oldtocsection=\tocsection
\let\oldtocsubsection=\tocsubsection
\let\oldtocsubsubsection=\tocsubsubsection
\renewcommand{\tocsection}[2]{\hspace{0em}\oldtocsection{#1}{#2}}
\renewcommand{\tocsubsection}[2]{\hspace{1em}\oldtocsubsection{#1}{#2}}
\renewcommand{\tocsubsubsection}[2]{\hspace{2em}\oldtocsubsubsection{#1}{#2}}
\newcommand{\mat}[4]{{\setlength{\arraycolsep}{0.5mm}\left[
\begin{array}{cc}#1&#2\\#3&#4\end{array}\right]}}
\newcommand{\forget}[1]{}
\newtheorem{lemma}{Lemma}[section]
\newtheorem{theorem}[lemma]{Theorem}
\newtheorem{corollary}[lemma]{Corollary}
\newtheorem*{theorem*}{Theorem ($\GL(2)$ twisting)}
\newcommand{\Q}{{\mathbb Q}}
\newcommand{\Z}{{\mathbb Z}}
\newcommand{\N}{{\mathbb N}}
\newcommand{\R}{{\mathbb R}}
\newcommand{\C}{{\mathbb C}}
\newcommand{\p}{\mathfrak p}
\newcommand{\GL}{{\rm GL}}
\newcommand{\GSp}{{\rm GSp}}
\newcommand{\trace}{{\rm tr}}
\newcommand{\SSp}{{\rm Sp}}
\begin{document}
 
\title{Fourier coefficients for twists of Siegel paramodular forms (expanded version)}
\author[Johnson-Leung and Roberts]{Jennifer Johnson-Leung\\
Brooks Roberts}
\thanks{The first author was partially supported by an NSA Young Investigator's Award.}
\subjclass[2010]{11F30, 11F46}
\keywords{Siegel modular form, paramodular level, Fourier coefficients, twisting}

\begin{abstract}
In this paper, we calculate the Fourier coefficients of the paramodular twist of a Siegel modular form of paramodular level $N$ by a nontrivial quadratic Dirichlet character mod $p$ for $p$ a prime not dividing $N$.  As an application, these formulas can be used to verify the nonvanishing of the twist for particular examples.  We also deduce that the twists of Maass forms are identically zero.
\end{abstract}
\maketitle
\section{Introduction}
Let $S_k(\Gamma^{\mathrm{para}}(N))$ be the space of Siegel cusp forms of paramodular level $N$, weight $k$, and degree two; if $N=1$, then this is the space of Siegel cups forms, $S_k(\SSp(4,\Z))$.  Let $p\nmid N$ be an odd prime, and let $\chi=\big(\frac{\cdot}{p}\big)$ be the nontrivial quadratic Dirichlet character of conductor $p$.  In \cite{JR3}, we introduced a twisting map $\mathcal{T}_\chi:
S_k(\Gamma^{\mathrm{para}}(N))\rightarrow S_k(\Gamma^{\mathrm{para}}(Np^4))$.  This map is induced by a corresponding local twisting map, which we studied in \cite{JR2}.  The nature of this local map implies that $\mathcal{T}_\chi$ is an analog, at the level of Siegel modular forms, of the map sending a cuspidal automorphic representation of $\GSp(4)/\Q$ to its twist by $\chi$.  

In this paper, we explicitly calculate $\mathcal{T}_\chi$ in terms of Fourier coefficients.  Let $F\in S_k(\Gamma^{\mathrm{para}}(N))$ have the Fourier expansion 
$$
F(Z)=\sum_{S\in A(N)^+}a(S)e^{2\pi i \trace{(SZ)}},
$$
and write the Fourier expansion of $\mathcal{T}_\chi(F)$ as
$$
\mathcal{T}_\chi(F)(Z)=\sum_{S\in A(Np^4)^+}W(\chi)a_\chi(S)e^{2\pi i \trace{(SZ)}}.
$$
Here, $A(N)^+$ and $A(Np^4)^+$ are defined as in \eqref{A(N)definition}, $W(\chi)$ is the Gauss sum of $\chi$, and $S=\left[\begin{smallmatrix}\alpha&\beta\\\beta&\gamma\end{smallmatrix}\right]$ with $\alpha, 2\beta, \gamma\in\Z$.   The main result of this work calculates the coefficients $a_\chi(S)$ in terms of $a(S)$.  In particular, we prove that if $p\nmid 2\beta$ then
$$
a_\chi(S)=p^{1-k}\chi(2\beta)\sum_{b=1}^{p-1}\chi(b)a(S[\begin{bmatrix}1&-bp^{-1}\\&p\end{bmatrix}]).
$$
Here $S[A]={}^tASA$ for a $2\times2$ matrix $A$.  This formula is analogous to the formula for the twist of elliptic modular forms $a_\chi(n)=W(\chi)\chi(n)a(n)$ given, for example, in \cite{Sh}.  When $p\mid 2\beta$, the formula for $a_\chi(S)$ is more complicated.  The full statement appears in Theorem \ref{maintheorem} below.

As a corollary of the theorem, we also prove that if $N=1$ and $F$ is in the Maass space, then $\mathcal{T}_\chi(F)=0$.  This result may be viewed as an additional check of the formulas of the theorem since, as explained below, the vanishing on the Maass space can also be proven by a different argument.  In the more complicated expressions for $a_\chi(S)$, the vanishing requires cancellations between terms coming from different pieces of the operator as expressed in \cite{JR3}. In our view, this nontrivial cancellation indicates that the expressions in the theorem do not enjoy further nontrivial reductions. 

On the other hand, $\mathcal{T}_\chi(F)$ is generally nonzero.  For example, if $N=1$, $p=3$, and $F=\Upsilon20$, we verify below that $\mathcal{T}_\chi(F)\neq0$ using the formula of Theorem \ref{maintheorem} and the coefficients provided in the database \cite{LMFDB} .  More generally, if $F$ is a paramodular newform and the local component at $p$ of the corresponding automorphic representation is generic, then $\mathcal{T}_\chi(F)$ is a nonzero newform \cite{JR2}.  Therefore, our result provides an additional source of explicit examples of paramodular newforms.  These examples might be used to investigate the paramodular conjecture of Brumer and Kramer \cite{BK} or the paramodular B\"ocherer's conjecture \cite{RT}.

We note that in \cite{An} Andrianov studies a twist, depending on $\chi$,  on Fourier coefficients of Siegel modular forms with respect to $\Gamma_0(N)$ and the principal congruence subgroups.  The motivation for this map appears to be rather different from the twist considered in our work.  Indeed, it is easy to see from the formulas below that this twist does not agree with $\mathcal{T}_\chi$ in the case when the groups coincide ($N=1$).
\section{Notation}\label{notation}
Let 
$$
J=\begin{bmatrix} &\mathbf{1_2}\\-\mathbf{1_2}&\end{bmatrix}.
$$
The algebraic group $\GSp(4)$ is defined as the subgroup of $g\in\GL(4)$ such that ${}^tgJg=\lambda(g)J$ for some $\lambda(g)\in\GL(1)$ called the similitude factor of $g$.  We let $\SSp(4)$ be the kernel of the homomorphism defined by $g\mapsto\lambda(g)$. define $\GSp(4,\R)^+$ as the subgroup of $\GSp(4,\R)$ such that $\lambda(g)>0$ and  For $N$ a positive integer, we define the paramodular group of level $N$ as 
$$
\Gamma^{\mathrm{para}}(N)=\SSp(4,\Q)\cap\begin{bmatrix}\Z&\Z&N^{-1}\Z&\Z\\ N\Z&\Z&\Z&\Z\\N\Z&N\Z&\Z&N\Z\\N\Z&\Z&\Z&\Z\end{bmatrix}.
$$
Let $\mathfrak{H}_2$ be the Siegel upper half plane of degree 2.  The group $\GSp(4,\R)^+$ acts on $\mathfrak{H}_2$ by
$$
h\langle Z \rangle=(AZ+B)(CZ+D)^{-1}, \qquad h=\begin{bmatrix}A&B\\C&D\end{bmatrix},\qquad Z\in\mathfrak{H}_2.
$$
For a function $F:\mathfrak{H}_2\rightarrow \C$, $h\in\GSp(4,\R)^+$ as above, and a positive integer $k$, we define the function $F|_k:\mathfrak{H}_2\rightarrow \C$ via
$$
(F|_kh)(Z)=\lambda(h)^kj(h,Z)^{-k}F(h\langle Z\rangle),\qquad Z\in\mathfrak{H}_2,
$$
where $j(h,Z)=\det(CZ+D)$ is the factor of automorphy.
We use the notation of \cite{PY} as regards Siegel modular forms defined with respect to the paramodular groups. In particular, we let $S_k(\Gamma^{\mathrm{para}}(N))$ be the space of Siegel modular cusp forms of weight $k$, degree two, and paramodular level $N$.  Let $F \in S_k(\Gamma^{\mathrm{para}}(N))$. $F$ has a Fourier expansion
\begin{equation}\label{fouriereq}
F(Z) = \sum_{S \in A(N)^+} a(S) e^{2\pi i \mathrm{tr} (SZ)}
\end{equation}
for $Z \in \mathfrak{H}_2$. Here, $A(N)^+$ is the set of all  $2 \times 2$ matrices $S$ of the form:
\begin{equation}\label{A(N)definition}
S=\mat{\alpha}{\beta}{\beta}{\gamma},\quad \alpha\in N\Z,\quad\gamma \in \Z,  \quad \beta \in \frac{1}{2}\Z, \quad \alpha>0, \quad \det \mat{\alpha}{\beta}{\beta}{\gamma} = \alpha\gamma-\beta^2>0. 
\end{equation}
Since $F|_kg=F$ for the elements $g \in \Gamma^{\mathrm{para}}(N)$ of the form
$$
g
=\begin{bmatrix} 1&&mN^{-1}& \\ &1&& \\ &&1& \\ &&&1 \end{bmatrix}, \qquad m \in \Z
$$
or of the form
$$
g = \begin{bmatrix} A& \\ & {}^t A^{-1} \end{bmatrix}, \qquad A = \mat{a_1}{a_2}{a_3}{a_4} \in \GL(2,\Z), \ a_3 \equiv 0 (N)
$$
we have for all $S$ in $A(N)^+$, 
$$
a(S) \neq 0 \implies N\mid\alpha
$$
and
$$
a(\,{}^t\hspace{-1mm}AS A) = \det (A)^k a(S), \qquad A = \mat{a_1}{a_2}{a_3}{a_4} \in \GL(2,\Z), \ a_3 \equiv 0 (N).
$$
We fix $p$ to be an odd prime with $p\nmid N$. For $a\in \Z$ and $\xi$ a Dirichlet character mod $p$, we define the Gauss sum
$$
W(\xi,a)=\sum_{b\in(\Z/p\Z)^\times}\xi(b)e^{2\pi i abp^{-1}},
$$
and we let $W(\xi)=W(\xi,1)$.   Throughout this work, $\chi$ is the nontrivial quadratic Dirichlet character mod $p$.  Note that $\chi=\big(\frac{\cdot}{p}\big)$ is the Legendre symbol, and $W(\chi,a)=\chi(a)W(\chi)$.  For $S$ as in \eqref{A(N)definition}, we let 
\begin{equation}\label{fSpolynomialeq}
f_S(X)=\alpha p^{-4}X^2-2\beta p^{-2}X+\gamma.
\end{equation}
\section{Results}
In this section, we present the main results of this paper.  The proofs appear in Section \ref{maintheoremproof}.
\begin{theorem}\label{maintheorem}
Let $N$ and $k$ be positive integers, $p$ an odd prime with $p\nmid N$, and $\chi$ the nontrivial quadratic Dirichlet character mod $p$, i.e. $\chi=\left(\frac{\cdot}{p}\right)$. Let $\mathcal{T}_\chi:S_k(\Gamma^{\mathrm{para}}(N))\rightarrow S_k(\Gamma^{\mathrm{para}}(Np^4))$ be the twisting map from Theorem 3.1 of \cite{JR3}.  Let $F\in S_k(\Gamma^{\mathrm{para}}(N))$ have the Fourier expansion
$$
F(Z) = \sum_{S \in A(N)^+} a(S) e^{2\pi i \mathrm{tr} (SZ)}.
$$
Then the twist of $F$ has the Fourier expansion
$$
\mathcal{T}_\chi(F)(Z)=\sum_{S\in A(Np^4)^+}W(\chi)a_\chi(S) e^{2\pi i \mathrm{tr}(SZ)}
$$
where $a_\chi(S)$ for $S=\begin{bmatrix}\alpha&\beta\\ \beta&\gamma\end{bmatrix}$ is given as follows:
\renewcommand{\theenumi}{\roman{enumi}}%
\begin{enumerate}
%item(i)
\item If $p\nmid 2\beta$ and $p^4\mid\alpha$, then 
\begin{equation}\label{case1eq}
a_\chi(S)=p^{1-k}\chi(2\beta)\sum_{\substack{b\in(\Z/p\Z)^\times}}\chi(b)a(S[\begin{bmatrix}1&-bp^{-1}\\&p\end{bmatrix}]).
\end{equation}
%item(ii)
\item If $p\mid\mid2\beta$ and $p^4\mid\mid \alpha$, then
\begin{align}\nonumber
a_\chi(S)&=p^{-1}\sum_{\substack{a,b\in(\Z/p\Z)^\times}}\chi\big(ab(2\beta p^{-1}a -\gamma)\big)a(S[\begin{bmatrix}1&-(a+b)p^{-1}\\&1\end{bmatrix}])\\\nonumber
 &+p^{-1}\sum_{\substack{a,z\in(\Z/p\Z)^\times\\z\not\equiv1(p)}} \chi\big(az(1-z)(az\alpha p^{-4}-2\beta p^{-1})\big)a(S[\begin{bmatrix}p^{-1}&-ap^{-2}\\&p\end{bmatrix}])\\\nonumber
&-p^{-1}\chi(\alpha p^{-4})a(S[\begin{bmatrix}p^{-2}&\\&p^2\end{bmatrix}])+p^{k-2}\chi(-\alpha p^{-4})a(S[\begin{bmatrix}p^{-1}&-xp^{-3}\\&1\end{bmatrix}]) \\
&+p^{k-2}\chi(-\gamma)a(S[\begin{bmatrix}1&yp^{-2}\\&p^{-1}\end{bmatrix}]),\label{case2eq}
\end{align}
where $x\alpha p^{-4}\equiv2\beta p^{-1}(p^2)$ and $y2\beta p^{-1}\equiv -\gamma(p^2)$.\\
%item(iii)
\item If $p\mid\mid 2\beta$ and $p^5\mid\alpha$, then
\begin{align}\nonumber
a_\chi(S) =&p^{-1}\sum_{\substack{a,b\in(\Z/p\Z)^\times}}\chi\big(ab( 2\beta p^{-1}a-\gamma)\big)a(S[\begin{bmatrix}1&-(a+b)p^{-1}\\&1\end{bmatrix}])\\
 &+p^{k-2}\chi(-\gamma)a(S[\begin{bmatrix}1&yp^{-2}\\&p^{-1}\end{bmatrix}])-p^{-1}\chi(2\beta p^{-1})\sum_{\substack{a\in(\Z/p\Z)^\times}}\chi(a)a(S[\begin{bmatrix}p^{-1}&-ap^{-2}\\&p\end{bmatrix}]),\label{case3eq}
 \end{align}
 where $y\in(\Z/p^2\Z)^\times$ is such that $2\beta p^{-1}y\equiv -\gamma(p^2).$\\
%item(iv)
\item If $p^2\mid 2\beta$ and $p^4\mid\mid\alpha$, then
\begin{align}\label{case4eq}
a_\chi(S)&=b_\chi(S)+c_\chi(S)+d_\chi(S)
\end{align}
where
 \begin{align*}
 b_\chi(S) &=(1-p^{-1})\chi(\gamma)a(S)-p^{-1}\chi( \gamma)\sum_{\substack{b\in(\Z/p\Z)^\times}}a(S[\begin{bmatrix}1&-bp^{-1}\\&1\end{bmatrix}])\\
 &+p^{k-3}\sum_{\substack{b,x,y\in(\Z/p\Z)^\times\\x,y\not\equiv1(p)}}\chi\big(y(1-x)(\alpha p^{-4} \big(\frac{y-x}{1-y}\big)b^2 +2\beta  p^{-2}b -\gamma x^{-1})\big)a(S[\begin{bmatrix}p^{-1}&-bp^{-2}\\&1\end{bmatrix}])\\
&+p^{k-3}\big(\chi(\gamma)+p\chi(-4\det(S)p^{-4}\gamma)-\chi( -\alpha p^{-4})W(\mathbf{1},2\beta p^{-2})\big) a(S[\begin{bmatrix}p^{-1}&\\&1\end{bmatrix}])\\
&+p^{k-3}\chi(-\alpha p^{-4})\sum_{x\in\Z/p\Z}W(\mathbf{1},2\beta p^{-2}-x\alpha p^{-4})a(S[\begin{bmatrix}p^{-1}&-xp^{-2}\\&1\end{bmatrix})\\
&+p^{2k-4}\sum\limits_{\substack{b\in(\Z/p^2\Z)^\times\\p^2\mid(\alpha p^{-4}b^2-2\beta p^{-2}b+\gamma)}}\sum\limits_{\substack{z\in(\Z/p\Z)^\times\\z\not\equiv1(p)}}\chi\big(z(1-z)(\gamma -z\alpha p^{-4}b^2)\big)a(S[\begin{bmatrix}p^{-1}&-bp^{-3}\\&p^{-1}\end{bmatrix}])\\
&-p^{-1}\chi(\alpha p^{-4})\sum_{\substack{a\in(\Z/p\Z)^\times}} a(S[\begin{bmatrix}p^{-1}&-ap^{-2}\\&p\end{bmatrix}])\\
&+p^{k-3}\chi(\alpha p^{-4})(p\chi(-4\det(S)p^{-4})-W(\mathbf{1},\gamma))a(S[\begin{bmatrix}p^{-2}&\\&p\end{bmatrix}])\\
&+(1-p^{-1})\chi(\alpha p^{-4})a(S[\begin{bmatrix}p^{-2}&\\&p^2\end{bmatrix}]),
 \end{align*}

\begin{align*}
c_\chi(S)=&\begin{cases}p^{2k-4}\chi(\alpha p^{-4})W(\mathbf{1},4\det(S)p^{-5})a(S[\begin{bmatrix}p^{-2}&\\&1\end{bmatrix}])& \text{if}\,\, p^5\mid \det(S)\,\, \text{and}\,\,\chi(\gamma\alpha p^{-4})=1\\
0 &\text{otherwise,}
\end{cases}
\end{align*}
and
\begin{align*}
d_\chi(S)&=\begin{cases}p^{3k-5}\chi(\alpha p^{-4})W(\mathbf{1},4p^{-6}\det(S))a(S[\begin{bmatrix}p^{-2}&-ap^{-3}\\&p^{-1}\end{bmatrix}])&\begin{array}[t]{l}\text{if}\,\,p^6\mid4\det(S)\\\text{and}\,\,p^8\nmid4\det(S)\end{array}\\
\begin{array}[t]{l}
\chi(\alpha p^{-4})\Big(p^{3k-5}(p-1)a(S[\begin{bmatrix}p^{-2}&-ap^{-3}\\&p^{-1}\end{bmatrix}])\\
\,\,+p^{4k-6}a(S[\begin{bmatrix}p^{-2}&-bp^{-4}\\&p^{-2}\end{bmatrix}])\Big)
\end{array}&\text{if}\,\,p^8\mid 4\det(S)\\
0&\text{otherwise},
\end{cases}
\end{align*}
where $a\in (\Z/p\Z)^\times$ satisfies $2\alpha p^{-4}a\equiv2\beta p^{-2}(p)$ and $b\in (\Z/p^2\Z)^\times$ satisfies $2\alpha p^{-4}b\equiv2\beta p^{-2}(p^2)$. (If $a$ or $b$ does not exist, then the corresponding term is 0.)\\
%item(v)
\item If $p^2\mid2\beta$ and $p^5\mid\alpha$, then
\begin{align}\nonumber
a_\chi(S) &=(1-p^{-1})\chi(\gamma)a(S)-p^{-1}\chi( \gamma)\sum_{\substack{b\in(\Z/p\Z)^\times}}a(S[\begin{bmatrix}1&-bp^{-1}\\&1\end{bmatrix}])\\
\nonumber
 &-p^{k-3}\sum_{b\in(\Z/p\Z)^\times}\sum_{\substack{x\in(\Z/p\Z)^\times\\x\not\equiv1(p)}}\chi\big((1-x) (2\beta  bp^{-2}-\gamma x^{-1})\big)a(S[\begin{bmatrix}p^{-1}&-bp^{-2}\\&1\end{bmatrix}])\\\nonumber
&+p^{k-3}\chi(\gamma)(1-p+p\chi(-4\det(S)p^{-4}))a(S[\begin{bmatrix}p^{-1}&\\&1\end{bmatrix}])\\
&-p^{2k-4}\chi( -\gamma)\sum\limits_{\substack{b\in(\Z/p^2\Z)^\times\\p^2\mid(\alpha p^{-4}b^2-2\beta p^{-2}b+\gamma)}}a(S[\begin{bmatrix}p^{-1}&-bp^{-3}\\&p^{-1}\end{bmatrix}]).\label{case5eq}
\end{align}
\end{enumerate}
\end{theorem}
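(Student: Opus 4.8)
The plan is to begin from the explicit description of the twisting operator in Theorem~3.1 of \cite{JR3}, where $\mathcal{T}_\chi$ is presented as a finite $\C$-linear combination of slash operators
\[
\mathcal{T}_\chi(F)=\sum_i c_i\,F|_k g_i,\qquad g_i\in\GSp(4,\Q)^+,
\]
with each coefficient $c_i$ assembled from values of $\chi$ and powers of $p$. Using the invariance of $F$ under $\Gamma^{\mathrm{para}}(N)$, the representatives $g_i$ may be arranged to factor through the Siegel parabolic, so that each acts on $\mathfrak{H}_2$ as a similitude scaling composed with a $\GL(2)$-Levi element $\diag(A,\lambda\,{}^tA^{-1})$ (acting by $Z\mapsto AZ\,{}^tA$) and a Siegel unipotent translation $Z\mapsto Z+X$ with $X={}^tX$. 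Thus computing the Fourier expansion of $\mathcal{T}_\chi(F)$ reduces to understanding how these three types of factor act on a Fourier expansion and then extracting the coefficient of $e^{2\pi i\trace(SZ)}$ for each fixed $S\in A(Np^4)^+$.

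First I would record the two elementary transformation rules. A Siegel translation $Z\mapsto Z+X$ multiplies the coefficient at $S$ by $e^{2\pi i\trace(SX)}$, while a Levi element acts by $Z\mapsto AZ\,{}^tA$ up to scaling, so that reindexing through $\trace(S\cdot AZ\,{}^tA)=\trace(S[A]\,Z)$ expresses its effect on the coefficient at $S$ as a value $a(S[A])$ weighted by an explicit power of $\det(A)$ and $\lambda$, where $S[A]={}^tASA$ as in the statement. Substituting the Fourier expansion of $F$ into the formula for $\mathcal{T}_\chi$ and collecting the coefficient of $e^{2\pi i\trace(SZ)}$ for fixed $S$, one obtains a sum over the pieces $g_i$ and over residues --- the variables $a,b,x,y,z$ of the statement, running modulo $p$ or $p^2$ --- of terms of the shape $\chi(\cdots)\,e^{2\pi i(\cdots)}\,a(S[A])$. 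The $\chi$-weighted translations are what produce the Gauss sums $W(\chi,\cdot)$ and the incomplete sums $W(\triv,\cdot)$, while the Levi factors produce the arguments $S[A]$.

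The core of the argument is the evaluation of these character sums, and this is exactly where the five cases originate. The governing quadratic is $f_S(X)=\alpha p^{-4}X^2-2\beta p^{-2}X+\gamma$ from \eqref{fSpolynomialeq}, whose discriminant equals $-4\det(S)p^{-4}$. Because $S\in A(Np^4)^+$ forces $p^4\mid\alpha$, the reduction type of $f_S$ modulo $p$ and modulo $p^2$ is controlled precisely by whether $\val(2\beta)\in\{0,1\}$ or $\val(2\beta)\ge2$ and by whether $\val(\alpha)=4$ or $\val(\alpha)\ge5$: the leading coefficient $\alpha p^{-4}$ is a unit iff $p^4\mid\mid\alpha$, and whether a given sum sees a genuine quadratic, a linear, or a constant character determines whether it evaluates to a multiple of $W(\chi)$, to a degenerate sum, or to zero. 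Completing the square in $f_S$, applying the standard evaluations of quadratic Gauss sums, and using $W(\chi,a)=\chi(a)W(\chi)$ converts each sum into the stated closed form; the thresholds $p^5\mid\det(S)$, $p^6\mid4\det(S)$, and $p^8\mid4\det(S)$ in cases (iv)--(v) are simply the levels of discriminant divisibility at which these Gauss sums degenerate, giving rise to the piecewise terms $c_\chi(S)$ and $d_\chi(S)$.

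The step I expect to be the main obstacle is the bookkeeping in the degenerate regimes. Since $\mathcal{T}_\chi$ splits into several double-coset pieces, a single coefficient $a_\chi(S)$ receives contributions from all of them, and when $p^2\mid2\beta$ the multivariable sums (over $b,x,y$ or over $b,z$) must be evaluated jointly and then recombined. Correctly tracking which residues survive, normalizing the powers of $p$ coming from the $\det(A)^k$ and $\lambda^k$ factors in each piece, and matching the degenerate Gauss-sum values against the discriminant conditions is delicate; moreover, as the authors note, genuine cancellation occurs between pieces (this is what forces $\mathcal{T}_\chi(F)=0$ on the Maass space when $N=1$), so no piece may be discarded in advance. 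I would therefore expect cases (iv) and (v), where $f_S$ has $p$-integral coefficients and can reduce to a linear or constant polynomial, to demand the most careful analysis.
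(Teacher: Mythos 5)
Your proposal takes essentially the same route as the paper: the authors likewise start from the decomposition $\mathcal{T}_\chi F=\sum_{l=1}^{14}F|_k\mathcal{T}_\chi^l$ of \cite{JR3}, with each piece an explicit sum of Siegel translations $\left[\begin{smallmatrix}1&Q\\&1\end{smallmatrix}\right]$ times Levi elements $\left[\begin{smallmatrix}P&\\&{}^tP^{-1}\end{smallmatrix}\right]$, evaluate the inner exponential sums as (incomplete) Gauss sums, reindex via the bijections $S\mapsto{}^tPSP$, and reduce everything to character sums of quadratic polynomials governed by $f_S$ and its discriminant $-4\det(S)p^{-4}$, exactly as you describe. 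Your sorting of the five cases by the valuations of $2\beta$ and $\alpha$, and your identification of the thresholds $p^5\mid\det S$, $p^6\mid4\det S$, $p^8\mid4\det S$ as the degeneration levels producing the piecewise terms $c_\chi(S)$ and $d_\chi(S)$, matches the paper's case-by-case assembly of the fourteen contributions $a_l(S)$.
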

As mentioned in the introduction, it is easy to verify that $\mathcal{T}_\chi$ is not identically zero.  For example, let $N=1$, $p=3$, and let $F$ be the Siegel cusp form $\Upsilon20$ in \cite{LMFDB}.  Then using \eqref{case1eq} the coefficient $a_\chi([\begin{smallmatrix}81&22\\22&6\end{smallmatrix}])$ is calculated to be nonzero as follows:
\begin{align*}
a_\chi(\begin{bmatrix}81&22\\22&6\end{bmatrix})&=3^{-19}\chi(44)\big(a(\begin{bmatrix}81&39\\39&19\end{bmatrix})-a(\begin{bmatrix}81&12\\12&2\end{bmatrix})\big)\\
&=-3^{-19}\big(a(\begin{bmatrix}1&0\\0&18\end{bmatrix})-a(\begin{bmatrix}2&0\\0&9\end{bmatrix})\big)\\
&=-3^{-19}(2256995864880+4329978670800)\\
&=-\frac{2256995864880}{1162261467}.
\end{align*}

On the other hand, when restricted to the Maass space, $\mathcal{T}_\chi$ is identically zero, as asserted by the following corollary.
\begin{corollary}\label{corollary}
Let the notation be as in Theorem \ref{maintheorem}, and assume that $k$ is even and $N=1$.  If $F$ is in the Maass space as defined in \cite{Ma}, then $\mathcal{T}_\chi(F)=0$.
\end{corollary}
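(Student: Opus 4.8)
The plan is to deduce the vanishing directly from the formulas of Theorem~\ref{maintheorem}, which is the computation advertised in the introduction as an additional check. The input is the characterization of the Maass space from \cite{Ma}: for $N=1$ and $k$ even, $F$ lies in the Maass space if and only if there is a function $c$ on the nonnegative integers, depending only on the discriminant, with
$$
a\!\left(\begin{bmatrix}\alpha&\beta\\\beta&\gamma\end{bmatrix}\right)=\sum_{d\mid\gcd(\alpha,2\beta,\gamma)}d^{k-1}c\!\left(\frac{4\det S}{d^2}\right).
$$
I would use two features of this relation throughout. First, $c$ sees only $4\det S$, while $\det(S[A])=\det(A)^2\det S$; hence within each individual sum of Theorem~\ref{maintheorem} the matrices $S[A]$ that occur all share one discriminant, determined by $\det(A)$, and the finitely many values of $\det(A)$ in a given case index the distinct families. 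Second, for any divisor $d$ prime to $p$ the relation $\gcd(d,p)=1$ lets me absorb the entries of $S[A]$ carrying a factor $\alpha p^{-j}$ into the divisibility conditions, making $d\mid\gcd(S[A])$ independent of the summation variables; the divisors $d$ divisible by $p$, which occur only once $p^2\mid2\beta$, require the finer analysis that produces the explicit $p$-powers and the Gauss sums $W(\mathbf 1,\cdot)$ of the statement.

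First I would carry out case (i) as the prototype. With $A_b=\left[\begin{smallmatrix}1&-bp^{-1}\\&p\end{smallmatrix}\right]$, the matrix $S[A_b]$ has upper-left entry $\alpha$, off-diagonal $\beta p-\alpha bp^{-1}$, lower-right entry $\alpha b^2p^{-2}-2\beta b+\gamma p^2$, and determinant $p^2\det S$. Because $p^4\mid\alpha$ and $p\nmid2\beta$, one checks that $\gcd\big(S[A_b]\big)=\gcd(\alpha,2\beta,\gamma)$ for every $b$: a divisor $d$ of the content is prime to $p$ and divides $\alpha p^{-2}$, $2\beta$ and $\gamma$, hence divides all three entries, while the lower-right entry is $\equiv-2\beta b\not\equiv0\pmod p$, so $\gcd(S[A_b])$ is prime to $p$ and no larger $d$ occurs. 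Consequently the Maass relation makes $a(S[A_b])=\sum_{d\mid\gcd(\alpha,2\beta,\gamma)}d^{k-1}c(4p^2\det S/d^2)$ independent of $b$, and \eqref{case1eq} collapses to this common value times $\sum_{b\in(\Z/p\Z)^\times}\chi(b)=0$. The whole mechanism is visible here: the Maass relation contracts the dependence on the summation variables into a common factor $c\big(4\det(A)^2\det S/d^2\big)$, leaving a character sum that must be shown to vanish.

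For cases (ii)--(v) I would follow the same three steps: (a) expand every $a(S[A])$ by the Maass relation, recording for each term the discriminant $4\det(A)^2\det S$ and the admissible $d$; (b) collect all contributions to a fixed value $c(D/d^2)$; and (c) show the resulting coefficient, a sum over the variables $a,b,z,x,y$ of products of $\chi$ with the explicit weights, is zero. In step (c) the cancellations come from $\sum_{b}\chi(b)=0$, from $W(\chi,a)=\chi(a)W(\chi)$, and from the elementary sums $W(\mathbf 1,\cdot)$ already present; in cases (iv) and (v) several distinct discriminants $4p^{-2j}\det S$ occur, and one must match the explicit powers $p^{2k-4},p^{3k-5},p^{4k-6}$ against the factor $d^{k-1}$ contributed by divisors $d$ divisible by $p$.

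The hard part---precisely the cancellation flagged in the introduction---is cases (iv) and (v), where $p^2\mid2\beta$. There the numerous terms of $b_\chi(S)$ must cancel both among themselves and against the conditionally defined pieces $c_\chi(S)$ and $d_\chi(S)$, whose presence is governed by divisibility conditions such as $p^5\mid\det S$ or $p^8\mid4\det S$. The obstacle is organizational rather than conceptual: one must verify that, in each divisibility regime for $\det S$, the discriminants produced by the various $S[A]$ line up in exactly the pattern that makes the positive and negative contributions annihilate, including the delicate matching of $p$-powers. Finally I would note that the vanishing also follows without the formulas: a level-one Maass form is a Saito--Kurokawa lift, so its local component at $p$ is non-generic, of Saito--Kurokawa type, and the local twisting map of \cite{JR2} annihilates its paramodular vector; the agreement of this conceptual argument with the formula-based computation is the consistency check referred to in the introduction.
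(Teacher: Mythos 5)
Your strategy coincides with the paper's own proof: expand each $a(S[A])$ via the Maass relation \eqref{maassfourier}, use that for $d$ prime to $p$ the condition $d\mid(\alpha',2\beta',\gamma')$ is independent of the summation variables (this is exactly the remark with which the paper opens its proof of the corollary), stratify the divisor sum by the power of $p$ dividing the content of $S[A]$, and reduce the vanishing to character-sum identities. Your case (i) is the paper's computation verbatim in substance, including the key observation that the lower-right entry of $S[A_b]$ is $\equiv-2\beta b\ (p)$, so the content is prime to $p$, the divisor sum is independent of $b$, and $\sum_b\chi(b)=0$ annihilates the term; and your closing representation-theoretic argument (Saito--Kurokawa type local components fail to be paramodular after twisting, by Lemma 5.5.2 of \cite{RS}) is the same alternative proof the paper records.

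The gap is that for cases (ii)--(v) you assert the cancellations rather than prove them, and the toolkit you name ($\sum_b\chi(b)=0$, $W(\chi,a)=\chi(a)W(\chi)$, elementary $W(\mathbf 1,\cdot)$ sums) is not sufficient to carry out step (c) of your plan. Concretely, the paper requires the closed-form evaluation of complete quadratic character sums (Lemma \ref{jlemma}), of the sum in Lemma \ref{mslemma}, and of the triple sums over $b,x,y$ in Lemma \ref{mmlemma} (nine separate cases); and, independently of any character-sum identity, it requires the solution structure of $f_S(x)\equiv 0\ (p^2)$ together with the finer congruences controlling the $p$-valuation of the content of $S[\left[\begin{smallmatrix}p^{-1}&-bp^{-3}\\&p^{-1}\end{smallmatrix}\right]]$ (Lemmas \ref{p22lemma}, \ref{quadeqlemma}, \ref{ccondlemma}). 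In case (iv) these lemmas decide whether the $l=8$ term contributes the two roots $b_\pm$, a full fiber $\{b_0+py\}$, or nothing, according as $p^4\mid\mid D(S)$ with $D(S)p^{-4}$ a square mod $p^2$, $p^6\mid D(S)$, or $p^5\mid\mid D(S)$; and the matching of $b_\chi(S)$ against the conditional pieces $c_\chi(S)$ and $d_\chi(S)$ hinges on the side conditions $\chi(\gamma\alpha p^{-4})=1$ and the existence of $a$ with $2\alpha p^{-4}a\equiv 2\beta p^{-2}\ (p)$, analyzed separately for $\val_p(D(S))\in\{4,5,6,7\}$ and $p^8\mid D(S)$, each split by $p\mid\gamma$ or not. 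Describing this as ``organizational rather than conceptual'' mislabels what is missing: without the mod-$p^2$ root-counting lemmas and the evaluated triple sums, the coefficient of each $C_\phi(D(S)/(p^jd)^2)$ cannot even be written down, and the cancellation that constitutes the entire content of the corollary remains unverified. So your proposal is the correct plan, identical in route to the paper's, with case (i) done and the decisive computations of cases (ii)--(v) outstanding.
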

The proof of this corollary is a calculation using the formulas given in Theorem \ref{maintheorem}.  This result is expected as a consequence of the  nature of the twisting map.  Indeed, we can give a representation theoretic proof of the corollary as follows.
Let $F$ be as in the corollary, and let $V$ be the subspace of the cuspforms on the ad\`eles of $\GSp(4)/\Q$ generated by $F$.  Then $V$ is a direct sum of finitely many irreducible subspaces, $V_i$.  We have $V_i\simeq\bigotimes_{v}\pi_{i,v}$ where $\pi_{i,v}$ is an irreducible, admissible representation of $\GSp(4,\Q_v)$ with trivial central character, and for finite $v$, $\pi_{i,v}$ is unramified of Saito-Kurokawa type (\cite{RS} Section 5.5, \cite{RS2}, \cite {Sc}).  By the construction in \cite{JR2} and \cite{JR3}, $\mathcal{T}_\chi(F)$ lies in a space that is the direct sum of irreducible subspaces isomorphic to $\bigotimes_v(\pi_{i,v}\otimes\chi_v)$.  Here, we regard $\chi$ as a character of the id\`eles as in \cite{JR3}.  However, by Lemma 5.5.2 of \cite{RS}, we see that $\pi_{i,p}\otimes\chi_p$ is not paramodular, and hence $\mathcal{T}_\chi(F)=0$.  In our view, the explicit proof of the corollary is strong evidence for the accuracy of the formulas given in the theorem.

\section{Proofs of the Theorem and Corollary}
We begin with a lemma that will be useful in evaluating character sums.
\label{maintheoremproof}
\begin{lemma}
\label{jlemma}
Let $A,B,C \in \Z$ and assume that $A \not \equiv 0\, (p)$ or $B \not \equiv 0\, (p)$. Set $D=B^2-4AC$. Then
$$
\sum_{x \in \Z/p\Z} \chi(Ax^2+Bx+C)= 
\begin{cases}
(p-1)\chi(A)&\text{if $D\equiv 0\, (p)$}\\
-\chi(A)&\text{if $D\not\equiv 0\, (p)$}.
\end{cases}
$$
In particular, if $a_1,a_2,b_1,b_2 \in \Z$ with $a_1 \not \equiv 0\, (p)$ and $a_2 \not \equiv 0\, (p)$, then
$$
\sum_{x \in \Z/p\Z} \chi(a_1x+b_1)\chi(a_2x+b_2)= 
\begin{cases}
(p-1)\chi(a_1a_2)&\text{if $a_1b_2 \equiv a_2b_1\, (p)$}\\
-\chi(a_1a_2)&\text{if $a_1b_2 \not\equiv a_2b_1\, (p)$}.
\end{cases}
$$
\end{lemma}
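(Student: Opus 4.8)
The plan is to reduce the quadratic character sum to a linear one by completing the square, which is the one maneuver available here since $p$ is odd and $\chi$ is multiplicative. First I would dispose of the degenerate case $A \equiv 0\,(p)$: the hypothesis then forces $B \not\equiv 0\,(p)$, so $Ax^2 + Bx + C \equiv Bx + C\,(p)$, and since $\chi$ depends only on the residue mod $p$ the summand is $\chi(Bx+C)$. As $x$ runs over $\Z/p\Z$ the quantity $Bx+C$ runs over all residues, so the sum is $\sum_{w}\chi(w) = 0$. Since $\chi(A)=0$ and $D \equiv B^2 \not\equiv 0\,(p)$ in this case, this matches the claimed value $-\chi(A) = 0$.

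For the main case $A \not\equiv 0\,(p)$, I would multiply and divide by $4A$, using $4A(Ax^2+Bx+C) = (2Ax+B)^2 - D$ together with multiplicativity of $\chi$ and the identity $\chi(4A) = \chi(4)\chi(A) = \chi(A)$ (as $\chi(4) = \chi(2)^2 = 1$). Because $2A$ is invertible mod $p$, the substitution $u = 2Ax+B$ is a bijection of $\Z/p\Z$, and the sum becomes $\chi(A)\sum_{u \in \Z/p\Z}\chi(u^2 - D)$.

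The crux is the evaluation of $\sum_{u}\chi(u^2 - D)$, the only point that requires genuine care since it is where the dichotomy $D \equiv 0$ versus $D \not\equiv 0$ enters. If $D \equiv 0\,(p)$ this is $\sum_{u \neq 0}\chi(u^2) = p-1$, giving $(p-1)\chi(A)$. If $D \not\equiv 0\,(p)$, I would count preimages: using $\#\{u : u^2 = t\} = 1 + \chi(t)$ for every $t$, we get $\sum_{u}\chi(u^2 - D) = \sum_{t}(1+\chi(t))\chi(t-D)$. The term $\sum_t \chi(t-D)$ vanishes, and in $\sum_t \chi(t)\chi(t-D)$ the contribution of $t=0$ is zero while for $t \neq 0$ the substitution $s = Dt^{-1}$ (a bijection of the nonzero residues, valid since $D \not\equiv 0$) gives $\sum_{s \neq 0}\chi(1-s) = \sum_{w \neq 1}\chi(w) = -1$. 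Hence the sum is $-\chi(A)$.

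Finally, for the ``in particular'' clause I would expand $\chi(a_1 x + b_1)\chi(a_2 x + b_2) = \chi\big(a_1 a_2 x^2 + (a_1 b_2 + a_2 b_1)x + b_1 b_2\big)$ and apply the first part with $A = a_1 a_2$, $B = a_1 b_2 + a_2 b_1$, $C = b_1 b_2$. The discriminant then simplifies to $D = (a_1 b_2 - a_2 b_1)^2$, so $D \equiv 0\,(p)$ precisely when $a_1 b_2 \equiv a_2 b_1\,(p)$, while $\chi(A) = \chi(a_1 a_2)$; this yields the stated two cases.
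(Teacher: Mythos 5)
Your proof is correct, and at its crux it takes a cleaner route than the paper. The setup coincides: both dispose of $A \equiv 0\,(p)$ by noting the summand degenerates to $\chi(Bx+C)$ with $B$ invertible, and both complete the square (you via the identity $4A(Ax^2+Bx+C)=(2Ax+B)^2-D$ together with $\chi(4A)=\chi(A)$, the paper by factoring out $\chi(A)$ and normalizing to the monic case — the same reduction in different clothing). The genuine divergence is in evaluating $\sum_u \chi(u^2-D)$ for $D \not\equiv 0\,(p)$. The paper splits into two subcases: when $D$ is a nonzero square it factors $u^2-D=(u+s)(u-s)$ and telescopes the resulting product of linear characters to $-1$; when $D$ is a nonsquare it runs a considerably longer manipulation parametrizing the sum over the subgroup of squares before landing on $\chi(D/4)=-1$. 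Your counting identity $\#\{u : u^2 = t\} = 1+\chi(t)$ converts the sum to $\sum_t (1+\chi(t))\chi(t-D)$ and kills both subcases in one uniform stroke — the $\sum_t \chi(t-D)$ piece vanishes over a full period, and the substitution $s = Dt^{-1}$ reduces the rest to $\sum_{w\neq 1}\chi(w) = -1$. This is shorter and avoids the quadratic-residue case analysis entirely, at the mild cost of invoking the (standard) preimage-count identity. You also gain a small point of completeness: you derive the ``in particular'' clause explicitly by expanding $\chi(a_1x+b_1)\chi(a_2x+b_2)$ as a quadratic with discriminant $(a_1b_2-a_2b_1)^2$, whereas the paper leaves that deduction to the reader.
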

\begin{proof}
Assume first that $A \equiv 0\, (p)$; by assumption, $B \not \equiv 0\, (p)$.  We have
$$
\sum_{x \in \Z/p\Z} \chi(Ax^2+Bx+C)= \sum_{x \in \Z/p\Z} \chi(Bx+C)= \sum_{x \in \Z/p\Z} \chi(x)= 0,
$$
as claimed. 

Now assume that $A \not \equiv 0\, (p)$, so that we may consider $A\in(\Z/p\Z)^\times$. Then
$$
\sum_{x \in \Z/p\Z} \chi(Ax^2+Bx+C)= \chi(A) \sum_{x \in \Z/p\Z} \chi(x^2+A^{-1}Bx+A^{-1}C).
$$
Since $B^2-4AC \equiv 0\, (p)$ if and only if $(A^{-1}B)^2-4(A^{-1}C)\equiv 0\, (p)$ we may assume for the remainder of the proof that $A\equiv 1\,(p)$. 

Assume that $D\equiv 0\, (p)$. We have
$$
\sum_{x \in \Z/p\Z} \chi(x^2+Bx+C)
=\sum_{x \in \Z/p\Z} \chi((x+B/2)^2)
=p-1.
$$
Assume that $D\not\equiv 0\, (p)$ and there exists $s \in \Z$ such that $(s,p)=1$ and $s^2 \equiv B^2-4C\, (p)$. Then
\begin{align*}
\sum_{x \in \Z/p\Z} \chi(x^2+Bx+C)
&=\sum_{x \in \Z/p\Z} \chi((x+B/2)^2-(s/2)^2)\\
&=\sum_{x \in \Z/p\Z} \chi(x+B/2+s/2) \chi(x+B/2-s/2) \\
&=\sum_{x \in \Z/p\Z} \chi(x) \chi(x-s) \\
&=\sum_{x \in (\Z/p\Z)^\times} \chi(x) \chi(x-s) \\
&=\sum_{x \in (\Z/p\Z)^\times} \chi(1-sx^{-1}) \\
&=\sum_{x \in (\Z/p\Z)^\times} \chi(1-sx) \\
&=\sum_{x \in (\Z/p\Z)^\times} \chi(1-x) \\
&=-1+\sum_{x \in \Z/p\Z} \chi(1-x) \\
&=-1.
\end{align*}
Assume that $D \not\equiv 0\, (p)$ and $B^2-4C$ is not a square mod $p$.   We have
\begin{align*}
\sum_{x \in \Z/p\Z} \chi(x^2+Bx+C)
&=\sum_{x \in \Z/p\Z} \chi((x+B/2)^2 -D/4)\\
&=\sum_{x \in \Z/p\Z} \chi(x^2 -D/4)\\
&=\chi(-D/4)+2\sum_{y \in (\Z/p\Z)^{\times 2} } \chi(y -D/4)\\
&=\chi(-D/4)+2\sum_{y \in (\Z/p\Z)^\times } \chi(y -D/4) -2\sum_{y \in (D/4)(\Z/p\Z)^{\times 2} } \chi(y -D/4)\\
&=\chi(-D/4)-2\chi(-D/4)+2\sum_{y \in \Z/p\Z} \chi(y -D/4) -2\!\!\!\!\sum_{y \in (D/4)(\Z/p\Z)^{\times 2} }\!\!\! \chi(y -D/4)\\
&=-\chi(-D/4) -2\sum_{y \in (D/4)(\Z/p\Z)^{\times 2} } \chi(y -D/4)\\
&=-\chi(-D/4) -2\sum_{y \in (\Z/p\Z)^{\times 2} } \chi(D/4) \chi(y -1)\\
&=-\chi(-D/4) - \chi(D/4)\sum_{x \in (\Z/p\Z)^{\times } } \chi(x^2 -1)\\
&=-\chi(-D/4) - \chi(D/4)\big( -\chi(-1)+ \sum_{x \in \Z/p\Z} \chi(x^2 -1)\big) \\
&=-\chi(-D/4) - \chi(D/4)\big( -\chi(-1) -1\big) \\
&=\chi(D/4) \\
&=-1.
\end{align*}
This completes the proof. 
\end{proof}
\begin{proof}[Proof of Theorem \ref{maintheorem}]
Let $F \in S_k(\Gamma^{\mathrm{para}}(N))$ with Fourier expansion given in \eqref{fouriereq}. In \cite{JR3}  Theorem 3.1 it was proven that $\mathcal{T}_\chi F = \sum_{l=1}^{14} F|_k\mathcal{T}_\chi^l$, where each $\mathcal{T}_\chi^l$ is an explicit finite formal sum of the form
$$
\mathcal{T}_\chi^l=\sum_{i \in I, j \in J} c_i d_j \begin{bmatrix} 1 & Q_{ij} \\ & 1 \end{bmatrix} \begin{bmatrix} P_i & \\ & {}^tP_i^{-1} \end{bmatrix}. 
$$
Let $Z \in \mathfrak{H}_2$. It follows that each $(F|_k \mathcal{T}_\chi^l)(Z)$ has the form
\begin{align}\label{lpartexpansioneq}
(F|_k \mathcal{T}_\chi^l)(Z) =& \sum_{S \in A(N)^+} a(S) \sum_{i \in I} c_i \det(P_i)^k \big( \sum_{j \in J} d_j e^{2\pi i \mathrm{tr}(SQ_{ij})} \big) e^{2\pi i\mathrm{tr}({}^tP_iSP_iZ)}\\
=&\nonumber \sum_{S \in A(Np^4)^+} a_l(S) e^{2\pi i \mathrm{tr}(SZ)}.
\end{align}
The $a_l(S)$ for each $l\in\{1,\dots,14\}$ and $S$ as in \eqref{A(N)definition} are computed as follows:

%l=1
\noindent{\bf The case $l=1$:}  From \cite{JR3} Theorem 3.1, we have
\begin{multline*}
(F|_k\mathcal{T}_\chi^1)(Z)=p^{-11} \sum_{S \in A(N)^+} a(S) \sum_{\substack{a,b,x\in(\Z/p^3\Z)^\times}}\chi(ab)\\ \big( \sum_{z\in\Z/p^4\Z} e^{2\pi i \mathrm{tr}(S\begin{bmatrix}zp^{-4}&-bp^{-2}\\-bp^{-2}&-x^{-1}p^{-1}\end{bmatrix})} \big) e^{2\pi i\mathrm{tr}(S[\begin{bmatrix}1&(a+xb)p^{-1}\\&1\end{bmatrix}]Z)}.
\end{multline*}
 Then the inner sum is calculated as
\begin{align*}
\sum_{z\in\Z/p^4\Z}e^{2\pi i (-\gamma p^{-1}x^{-1}-2b\beta p^{-2}+\alpha z p^{-4})}
&=\begin{cases} p^{4}e^{-2\pi i (\gamma px^{-1}+2b\beta)p^{-2}}&\text{if } p^4\mid \alpha,\\ 
0&\text{otherwise.}\end{cases}
\end{align*}
Now, $(F|_k \mathcal{T}_\chi^1)(Z)$ is computed as follows:
\begin{align*}
 &p^{-7}\sum_{S\in A(Np^4)^+}\sum_{\substack{a,b,x\in(\Z/p^3\Z)^\times}}a(S)\chi(ab)e^{-2\pi i (\gamma px^{-1}+2b\beta)p^{-2}}e^{2\pi i \trace (S[\begin{bmatrix}1&(a+bx)p^{-1}\\&1\end{bmatrix} ]Z)}\\
 &= p^{-7}\sum_{S\in A(Np^4)^+}\sum_{\substack{a,b,x\in(\Z/p^3\Z)^\times}}a(S)\chi(abx)e^{-2\pi i (\gamma p+2b\beta)x^{-1}p^{-2}}e^{2\pi i \trace (S[\begin{bmatrix}1&(a+b)p^{-1}\\&1\end{bmatrix} ]Z)}\\
  &= p^{-7}\sum_{S\in A(Np^4)^+}\sum_{\substack{a,b\in(\Z/p^3\Z)^\times\\x_0\in(\Z/p\Z)^\times}}\sum_{x_1\in\Z/p^2\Z}a(S)\chi(abx_0)e^{-2\pi i (\gamma p+2b\beta)(x_0+px_1)p^{-2}}e^{2\pi i \trace (S[\begin{bmatrix}1&(a+b)p^{-1}\\&1\end{bmatrix} ]Z)}\\
&= p^{-5}\sum_{\substack{S\in A(Np^4)^+\\p\mid 2\beta}}\sum_{\substack{a,b\in(\Z/p^3\Z)^\times}}a(S)\chi(ab)W(\chi, -\gamma-2\beta p^{-1}b)e^{2\pi i \trace (S[\begin{bmatrix}1&(a+b)p^{-1}\\&1\end{bmatrix} ]Z)}.\\
\end{align*}
First interchanging the order of summation, consider the map $S\mapsto {}^tPSP$, for $P=\left[\begin{smallmatrix}1&(a+b)p^{-1}\\&1\end{smallmatrix}\right]$.  Evidently, this map is a bijection from $\{S\in A(Np^4)^+ : p\mid 2\beta\}$ to itself.  Continuing the calculation, we have
\begin{align*}
&=p^{-5}\sum_{\substack{a,b\in(\Z/p^3\Z)^\times}}\sum_{\substack{S\in A(Np^4)^+\\p\mid 2\beta}}a(S)\chi(ab)W(\chi, -\gamma-2\beta p^{-1}b)e^{2\pi i \trace (S[\begin{bmatrix}1&(a+b)p^{-1}\\&1\end{bmatrix} ]Z)}\\
&=p^{-5}\sum_{\substack{a,b\in(\Z/p^3\Z)^\times}}\sum_{\substack{S\in A(Np^4)^+\\p\mid 2\beta}}a(S[\begin{bmatrix}1&-(a+b)p^{-1}\\&1\end{bmatrix}])\chi(ab)\\
&\qquad W(\chi, -\gamma+2\beta p^{-1}(a+b)-\alpha(a+b)^2p^{-2}-2(\beta-\alpha(a+b)p^{-1})p^{-1}b)e^{2\pi i \trace (SZ)}\\
&=p^{-1}\sum_{\substack{S\in A(Np^4)^+\\p\mid 2\beta}}\sum_{\substack{a,b\in(\Z/p\Z)^\times}}a(S[\begin{bmatrix}1&-(a+b)p^{-1}\\&1\end{bmatrix}])\chi(ab)W(\chi, -\gamma+2\beta p^{-1}a)e^{2\pi i \trace (SZ)}.
\end{align*}
%l=2
\noindent{\bf The case $l=2$:}  
From \cite{JR3} Theorem 3.1, we have
\begin{multline*}
(F|_k\mathcal{T}_\chi^2)(Z)=p^{k-11} \sum_{S \in A(N)^+} a(S) \sum_{b\in(\Z/p^3\Z)^\times}\chi(b)\\ \big( \!\!\!\sum_{\substack{a,x,y\in(\Z/p^3\Z)^\times\\x,y\not\equiv1(p)}}\!\!\!\!\chi(axy) e^{2\pi i \mathrm{tr}(S\begin{bmatrix}-ab(1-(1-y)^{-1}x)p^{-3}&-ap^{-2}  \\-ap^{-2}& -ab^{-1}(1-x)^{-1}p^{-1}  \end{bmatrix})} \big) e^{2\pi i\mathrm{tr}(S[\begin{bmatrix}p&bp^{-1}\\&1\end{bmatrix}]Z)}.
\end{multline*}
The inner sum is calculated as
\begin{align*}
&\sum_{\substack{a,x,y\in(\Z/p^3\Z)^\times\\x,y\not\equiv1(p)}}\chi(axy) e^{2\pi i \big(-a\gamma b^{-1}p^{-1}(1-x)^{-1}-2a\beta p^{-2}-ab(1-x(1-y)^{-1})\alpha p^{-3}\big)}\\
=&\sum_{\substack{a_0\in(\Z/p\Z)^\times\\x,y\in(\Z/p^3\Z)^\times\\x,y\not\equiv1(p)}}\sum_{a_1\in\Z/p^2\Z}\chi(a_0xy) e^{2\pi i \big(-\gamma b^{-1}(1-x)^{-1}p^2-2\beta p-b(1-x(1-y)^{-1})\alpha\big)(a_0+pa_1)p^{-3}}.\\
\intertext{We see that this sum vanishes unless $p^2\mid-2\beta p-b(1-x(1-y)^{-1})\alpha$, which implies that $p\mid\alpha$. Then our sum is}
=&p^{2}\sum\limits_{\substack{a_0\in(\Z/p\Z)^\times\\x,y\in(\Z/p^3\Z)^\times\\x,y\not\equiv1(p)\\p^2\mid-2\beta p-b(1-x(1-y)^{-1})\alpha}}\chi(a_0xy) e^{2\pi i \big(-\gamma b^{-1}(1-x)^{-1}p^2-2\beta p-b(1-x(1-y)^{-1})\alpha\big)a_0 p^{-3}}\\
=&p^{2}\sum\limits_{\substack{a_0,x_0\in(\Z/p\Z)^\times\\x_1\in\Z/p^2\Z\\y\in(\Z/p^3\Z)^\times\\x_0,y\not\equiv1(p)\\p^2\mid-2\beta p-b(1-(x_0+px_1)(1-y)^{-1})\alpha}}\chi(a_0x_0y) e^{2\pi i \big(-\gamma b^{-1}(1-x_0)^{-1}p^2-2\beta p-b(1-(x_0+px_1)(1-y)^{-1})\alpha\big)a_0 p^{-3}}.\\
\intertext{We see that this sum vanishes unless $p^2\mid\alpha$.  Our two conditions are equivalent to the conditions that $p^2\mid\alpha$ and $p\mid 2\beta$.  Assume this.  Continuing the calculation, we find that our sum is}
=&p^{4}\sum\limits_{\substack{a_0,x_0\in(\Z/p\Z)^\times\\y\in(\Z/p^3\Z)^\times\\x_0,y\not\equiv1(p)}}\chi(a_0x_0y) e^{2\pi i \big(-\gamma b^{-1}(1-x_0)^{-1}p^2-2\beta p-b(1-x_0(1-y)^{-1})\alpha\big)a_0 p^{-3}}\\
=&p^{6}\sum\limits_{\substack{x,y\in(\Z/p\Z)^\times\\x,y\not\equiv1(p)}}\chi(xy) W(\chi,-\gamma b^{-1}(1-x)^{-1}-2\beta p^{-1}-b(1-x(1-y)^{-1})\alpha p^{-2}).
\end{align*}

We now calculate the full sum, 
\begin{multline*}
(F|_k \mathcal{T}_\chi^2)(Z)=p^{k-5}\sum_{\substack{S\in A(Np^2)^+\\p\mid2\beta}}\sum_{\substack{b\in(\Z/p^3\Z)^\times\\x,y\in(\Z/p\Z)^\times\\x,y\not\equiv1(p)}}a(S)\chi(bxy) \\W(\chi, -\gamma b^{-1}(1-x)^{-1}-2\beta p^{-1}-b(1-x(1-y)^{-1})\alpha p^{-2}) e^{2\pi i\trace(S[\begin{bmatrix}p&bp^{-1}\\&1\end{bmatrix}]Z)}.
\end{multline*}
First interchanging the order of summation, consider the map $S\mapsto {}^tPSP$, for $P=\left[\begin{smallmatrix}p&bp^{-1}\\&1\end{smallmatrix}\right]$.  Evidently, this map is a bijection between $\{S\in A(Np^2)^+ : p\mid2\beta\}$ and $\{S\in A(Np^4)^+: p^2\mid 2\beta\}$.  Continuing the calculation, we have
\begin{align*}
&=p^{k-5}\sum_{\substack{b\in(\Z/p^3\Z)^\times\\x,y\in(\Z/p\Z)^\times\\x,y\not\equiv1(p)}}\sum_{\substack{S\in A(Np^4)^+\\p^2\mid2\beta}}a(S[\begin{bmatrix}p^{-1}&-bp^{-2}\\&1\end{bmatrix}])\chi(bxy) e^{2\pi i\trace(SZ)}\\
&\qquad W(\chi, -(\gamma-b2\beta p^{-2}+b^2\alpha p^{-4}) b^{-1}(1-x)^{-1}-2(\beta p^{-2}-b\alpha p^{-4})-b(1-x(1-y)^{-1})\alpha p^{-4})\\
=&p^{k-5}\sum_{\substack{S\in A(Np^4)^+\\p^2\mid2\beta}}\sum_{\substack{b\in(\Z/p^3\Z)^\times\\x,y\in(\Z/p\Z)^\times\\x,y\not\equiv1(p)}}a(S[\begin{bmatrix}p^{-1}&-bp^{-2}\\&1\end{bmatrix}])\chi(bxy) e^{2\pi i\trace(SZ)}\\
&\qquad W(\chi, (1-x)^{-1}(-b^{-1}\gamma+x2\beta p^{-2}+x(1-y)^{-1}(y-x)b\alpha p^{-4} ))\\
=&p^{k-3}\sum_{\substack{S\in A(Np^4)^+\\p^2\mid2\beta}}\sum_{\substack{b,x,y\in(\Z/p\Z)^\times\\x,y\not\equiv1(p)}}a(S[\begin{bmatrix}p^{-1}&-bp^{-2}\\&1\end{bmatrix}])\chi((1-x)y) \\
&\qquad W(\chi, -\gamma x^{-1}+2\beta  bp^{-2}+\alpha (1-y)^{-1}(y-x)b^2 p^{-4} )e^{2\pi i\trace(SZ)}.
\end{align*}
%case l=3
\noindent{\bf The case $l=3$:} 
From \cite{JR3} Theorem 3.1, we have 
\begin{multline*}
(F|_k\mathcal{T}_\chi^3(Z)=p^{k-6}\sum_{S\in A(N)^+}a(S)\\\big(\sum_{\substack{a\in(\Z/p^2\Z)^\times\\ b\in(\Z/p^3\Z)^\times\\ z\in(\Z/p\Z)^\times\\z\not\equiv1(p)}}\chi(b(1-z))e^{2\pi i \mathrm{tr}(S\begin{bmatrix}
-bp^{-3}&ap^{-2}\\
ap^{-2}&-a^2b^{-1}zp^{-1}
\end{bmatrix})}\big)e^{2\pi i \mathrm{tr}(S[\begin{bmatrix}p&\\&1\end{bmatrix}]Z)}.
\end{multline*}
The inner sum is calculated as
\begin{align*}
&\sum_{\substack{a\in(\Z/p^2\Z)^\times\\ b\in(\Z/p^3\Z)^\times\\ z\in(\Z/p\Z)^\times\\z\not\equiv1(p)}}\chi(b(1-z))e^{2\pi i \big(- a^2b^{-1}z\gamma p^2+2a\beta p-b\alpha\big)p^{-3}}\\
&=\sum_{\substack{a_1\in\Z/p\Z\\ b_1\in\Z/p^2\Z\\a_0,b_0, z\in(\Z/p\Z)^\times\\z\not\equiv1(p)}}\chi(b_0(1-z))e^{2\pi i \big(- a_0^2b_0^{-1}z\gamma p^2+2(a_0+pa_1)\beta p-(b_0+pb_1)\alpha\big)p^{-3}}\\
\intertext{This is zero unless $p\mid 2\beta$ and $p^2\mid\alpha$.  Assume this.  Our sum is}
&=p^{3}\sum\limits_{\substack{a, b,z\in(\Z/p\Z)^\times\\z\not\equiv1(p)}}\chi(b(1-z))e^{2\pi i(- a^2z\gamma +2a\beta p^{-1}-\alpha p^{-2})bp^{-1}}\\
&=p^{3}\sum\limits_{\substack{a, b,z\in(\Z/p\Z)^\times\\z\not\equiv1(p)}}\chi(bz)e^{2\pi i(- a^2(1-z)\gamma +2a\beta p^{-1}-\alpha p^{-2})bp^{-1}}\\
&=p^{3}\sum\limits_{\substack{a, b\in(\Z/p\Z)^\times}}\chi(b)\big((\sum_{z\in(\Z/p\Z)^\times}\chi(z)e^{2\pi ia^2z\gamma bp^{-1}}) -e^{2\pi i a^2\gamma bp^{-1}}\big)e^{2\pi i(- a^2\gamma +2a\beta p^{-1}-\alpha p^{-2})bp^{-1}}\\
&=p^{3}\big(W(\chi,\gamma)\sum\limits_{\substack{a, b\in(\Z/p\Z)^\times}}e^{2\pi i(- a^2\gamma +2a\beta p^{-1}-\alpha p^{-2})bp^{-1}}\big)-p^3\big(\sum\limits_{\substack{a, b\in(\Z/p\Z)^\times}}\chi(b)e^{2\pi i(2a\beta p^{-1}-\alpha p^{-2})bp^{-1}}\big)\\
&=p^{3}\big(W(\chi,\gamma)\sum\limits_{\substack{a\in(\Z/p\Z)^\times}}W(\mathbf{1},- a^2\gamma +2a\beta p^{-1}-\alpha p^{-2})\big)-p^3W(\mathbf{1},2\beta p^{-1})W(\chi,-\alpha p^{-2}).\\
\end{align*}
We now calculate the full sum
\begin{multline*}
(F|_k\mathcal{T}_\chi^3(Z)=p^{k-3}\sum_{\substack{S\in A(Np^2)^+\\p\mid2\beta}}a(S)\Big(\big(W(\chi,\gamma)\sum\limits_{\substack{a\in(\Z/p\Z)^\times}}W(\mathbf{1},- a^2\gamma +2a\beta p^{-1}-\alpha p^{-2})\big)\\-W(\mathbf{1},2\beta p^{-1})W(\chi,-\alpha p^{-2})\Big)e^{2\pi i \mathrm{tr}(S[\begin{bmatrix}p&\\&1\end{bmatrix}]Z)}.
\end{multline*}
Now, consider the map $S\mapsto {}^tPSP$, for $P=\left[\begin{smallmatrix}p&\\&1\end{smallmatrix}\right]$.  Evidently, this map is a bijection between $\{S\in A(Np^2)^+ : p\mid2\beta\}$ and $\{S\in A(Np^4)^+: p^2\mid 2\beta\}$.  Continuing the calculation, we have
\begin{align*}
&=p^{k-3}\sum_{\substack{S\in A(Np^2)^+\\p\mid2\beta}}a(S)\Big(\big(W(\chi,\gamma)\sum\limits_{\substack{a\in(\Z/p\Z)^\times}}W(\mathbf{1},- a^2\gamma +2a\beta p^{-1}-\alpha p^{-2})\big)\\
&\qquad\qquad-W(\mathbf{1},2\beta p^{-1})W(\chi,-\alpha p^{-2})\Big)e^{2\pi i \mathrm{tr}(S[\begin{bmatrix}p&\\&1\end{bmatrix}]Z)}\\
&=p^{k-3}\sum_{\substack{S\in A(Np^4)^+\\p^2\mid2\beta}}a(S[\begin{bmatrix}p^{-1}&\\&1\end{bmatrix}])\Big(\big(W(\chi,\gamma)\sum\limits_{\substack{a\in(\Z/p\Z)^\times}}W(\mathbf{1},- a^2\gamma +2a\beta p^{-2}-\alpha p^{-4})\big)\\
&\qquad\qquad-W(\mathbf{1},2\beta p^{-2})W(\chi,-\alpha p^{-4})\Big)e^{2\pi i \mathrm{tr}(SZ)}.
\end{align*}
We calculate 
\begin{multline*}
W(\chi,\gamma)\sum\limits_{\substack{a\in(\Z/p\Z)^\times}}W(\mathbf{1},- a^2\gamma +2a\beta p^{-2}-\alpha p^{-4})\\
=W(\chi,\gamma)(-W(\mathbf{1},-\alpha p^{-4})+\sum\limits_{\substack{a\in\Z/p\Z}}W(\mathbf{1},- a^2\gamma +2a\beta p^{-2}-\alpha p^{-4})).
\end{multline*}
  Note that this is zero if $p\mid\gamma$; assume that $p\nmid\gamma$.  We notice that the Gauss sum $W(\mathbf{1},- a^2\gamma +2a\beta p^{-2}-\alpha p^{-4})$ is $p-1$ if $a$ is a root of the polynomial $- x^2\gamma +2x\beta p^{-2}-\alpha p^{-4}$ in $x$ modulo $p$ and -1 otherwise.  Thus, we need only count the solutions for a given $S$ modulo $p$.  The discriminant of the polynomial is $-4\det(S)p^{-4}$.  Hence, there are no solutions when $-4\det(S)p^{-4}$ is not a square mod $p$, one solution when $p\mid(-4\det(S)p^{-4})$, and two solutions when $-4\det(S)p^{-4}$ is a square mod $p$ and $p\nmid-4\det(S)p^{-4}$.  Thus, the final answer is 
\begin{multline*}
p^{k-3}\sum_{\substack{S\in A(Np^4)^+\\p^2\mid2\beta}}a(S[\begin{bmatrix}p^{-1}&\\&1\end{bmatrix}])\Big(W(\chi,\gamma)\big(-W(\mathbf{1},-\alpha p^{-4})+p\chi(-4\det(S)p^{-4})\big)\\-W(\chi, -\alpha p^{-4})W(\mathbf{1},2\beta p^{-2})\Big)e^{2\pi i \trace(SZ)}.
\end{multline*}
%case l=4
\noindent{\bf The case $l=4$:}
From \cite{JR3} Theorem 3.1, we have that
\begin{multline*}
(F|_k\mathcal{T}_\chi^4)(Z)=\\p^{k-10}\sum_{S\in A(N)^+}a(S)\sum_{x\in (\Z/p^4\Z)^\times}
\big(\sum_{\substack{a\in\Z/p^4\Z\\ b\in(\Z/p^3\Z)^\times}} \chi(b)e^{2\pi i \mathrm{tr}(S\begin{bmatrix}(ax-bp)p^{-4}& ap^{-2} \\ap^{-2} &\end{bmatrix})}\big)e^{2\pi i \mathrm{tr}(S[\begin{bmatrix}p&xp^{-2}\\&1\end{bmatrix}]Z)}.
\end{multline*}
The inner sum is calculated as
\begin{align*}
&\sum_{\substack{a\in\Z/p^4\Z\\ b\in(\Z/p^3\Z)^\times}} \chi(b)
e^{2\pi i \big(a(x\alpha+2\beta p^2)-b\alpha p\big)p^{-4}}=\begin{cases}p^{6}W(\chi,-\alpha p^{-2})& \text{if}\, p^4\mid(x\alpha+2\beta p^2),\\
0& \text{otherwise}.\end{cases}
\end{align*}
Now, consider the map $S\mapsto {}^tPSP$, for $P=\left[\begin{smallmatrix}p&xp^{-2}\\&1\end{smallmatrix}\right]$, first interchanging the order of summation.  Evidently, this map is a bijection between $\{S\in A(N)^+:p^4\mid(x\alpha+2\beta p^2)\}$ and $\{S\in A(Np^4)^+:p\mid 2\beta\, \text{and}\, p^2\mid(2\beta p^{-1}-x\alpha p^{-4})\}$. Continuing the calculation, we have
\begin{align*}
=&p^{k-4}\sum_{x\in(\Z/p^4\Z)^\times}\sum\limits_{\substack{S\in A(N)^+\\p^4\mid(x\alpha+2\beta p^2)}}a(S)W(\chi,-\alpha p^{-2}) e^{2\pi i \trace(S[\begin{bmatrix}p&xp^{-2}\\&1\end{bmatrix}]Z)}\\
=&p^{k-4}\sum_{x\in(\Z/p^4\Z)^\times}\sum\limits_{\substack{S\in A_(Np^4)^+\\p\mid2\beta,\,p^2\mid(2\beta p^{-1}-x\alpha p^{-4})}}a(S[\begin{bmatrix}p^{-1}&-xp^{-3}\\&1\end{bmatrix}])W(\chi,-\alpha p^{-4}) e^{2\pi i \trace(SZ)}\\
=&p^{k-2}\sum\limits_{\substack{S\in A(Np^4)^+\\p\mid2\beta}}\,\,\sum_{\substack{x\in(\Z/p^2\Z)^\times\\x\alpha p^{-4}\equiv2\beta p^{-1}(p^2)}}a(S[\begin{bmatrix}p^{-1}&-xp^{-3}\\&1\end{bmatrix}])W(\chi,-\alpha p^{-4}) e^{2\pi i \trace(SZ)}.\\
\end{align*}
Note that there is only one such $x$ for each $S$.\\

%case l=5
\noindent{\bf The case $l=5$:}
From \cite{JR3} Theorem 3.1 we have that
\begin{multline*}
(F|_k\mathcal{T}_\chi^5)(Z)=\\p^{k-9}\sum_{S\in A(N)^+}a(S)\sum_{x\in \Z/p^3\Z}
\big(\sum_{\substack{a,b\in(\Z/p^3\Z)^\times}} \chi(b)e^{2\pi i \mathrm{tr}(S\begin{bmatrix}(ax-b)p^{-3}&ap^{-2} \\ap^{-2} &\end{bmatrix})}\big)e^{2\pi i \mathrm{tr}(S[\begin{bmatrix} p& xp^{-1}\\&1\end{bmatrix}]Z)}.
\end{multline*}
Reasoning as in the previous cases, we conclude that 
\begin{multline*}
(F|_k\mathcal{T}_\chi^5)(Z)=\\p^{k-3}\sum_{\substack{S\in A(Np^4)^+\\p^2\mid2\beta}}\sum_{x\in\Z/p\Z}a(S[\begin{bmatrix}p^{-1}&-xp^{-2}\\&1\end{bmatrix})W(\mathbf{1},2\beta p^{-2}-x\alpha p^{-4})W(\chi,-\alpha p^{-4})e^{2\pi i\trace(SZ)}.
\end{multline*}

%case l=6
\noindent{\bf The case $l=6$:}
From \cite{JR3} Theorem 3.1, we have that
\begin{multline*}
(F|_k\mathcal{T}_\chi^6)(Z)=\\p^{2k-6}\sum_{S\in A(N)^+}a(S)
\big(\sum_{\substack{a,b\in(\Z/p^2\Z)^\times\\x\in(\Z/p\Z)^\times}} \chi(bx)e^{2\pi i \mathrm{tr}(S\begin{bmatrix}b(1+xp)p^{-2}&ap^{-2}\\
ap^{-2}&a^2b^{-1}p^{-2}\end{bmatrix})}\big)e^{2\pi i \mathrm{tr}(S[\begin{bmatrix} p^2&\\&1\end{bmatrix}]Z)}.
\end{multline*}
The inner sum is calculated as
\begin{align*}
&\sum_{\substack{a,b\in(\Z/p^2\Z)^\times\\ x\in(\Z/p\Z)^\times}} \chi(bx)
e^{2\pi i \big(a^2b^{-1}\gamma+2a\beta+\alpha b(px+1)\big)p^{-2}}\\
=&\sum_{\substack{a\in(\Z/p^2\Z)^\times\\ x,b_0\in(\Z/p\Z)^\times}} \sum_{b_1\in\Z/p\Z}\chi(b_0x)
e^{2\pi i \big(a^2(b_0^{-1}-b_1b_0^{-2}p)\gamma+2a\beta+\alpha (b_0+b_1p)(px+1)\big)p^{-2}}\\
=&p\sum_{\substack{a\in(\Z/p^2\Z)^\times\\ x,b_0\in(\Z/p\Z)^\times\\\alpha\equiv a^2b_0^{-2}\gamma  (p)}} \chi(b_0x)
e^{2\pi i \big(a^2b_0^{-1}\gamma+2a\beta+\alpha b_0(px+1)\big)p^{-2}}\\
=&p\sum_{\substack{ x,a_0,b_0\in(\Z/p\Z)^\times\\\alpha\equiv a_0^2b_0^{-2} \gamma (p)}}\sum_{a_1\in\Z/p\Z} \chi(b_0x)
e^{2\pi i \big((a_0+pa_1)^2b_0^{-1}\gamma+2(a_0+pa_1)\beta+\alpha b_0(px+1)\big)p^{-2}}\\
=&p^2\sum_{\substack{ x,a_0,b_0\in(\Z/p\Z)^\times\\\alpha\equiv a_0^2b_0^{-2}\gamma (p)\\2\beta\equiv-2a_0b_0^{-1}\gamma(p)}} \chi(b_0x)
e^{2\pi i \big(a_0^2b_0^{-1}\gamma+2a_0\beta+\alpha b_0(px+1)\big)p^{-2}}.\\
\intertext{If $p\mid\alpha$ then the sum on $x$ forces the expression to vanish.  So we assume that $p\nmid\alpha$. Now our sum is}
&p^{2}\sum\limits_{\substack{a_0,b_0\in(\Z/p\Z)^\times\\\alpha\equiv a_0^2b_0^{-2}\gamma (p)\\2\beta\equiv-2a_0b_0^{-1}\gamma(p)}} \sum\limits_{x\in(\Z/p\Z)^\times}\chi(b_0x)e^{2\pi i\alpha b_0 x p^{-1}}
e^{2\pi i \big(a_0^2b_0^{-1}\gamma+2a_0\beta+\alpha b_0\big)p^{-2}}\\
=&p^{2} W(\chi,\alpha)
\sum\limits_{\substack{a_0,b_0\in(\Z/p\Z)^\times\\\alpha\equiv a_0^2b_0^{-2}\gamma (p)\\2\beta\equiv-2a_0b_0^{-1}\gamma(p)}}e^{2\pi i \big(a_0^2b_0^{-1}\gamma+2\beta a_0+\alpha b_0\big)p^{-2}}\\
=&p^2W(\chi,\alpha)
\sum\limits_{\substack{a\in(\Z/p\Z)^\times\\\alpha\equiv a^2\gamma (p)\\2\beta\equiv-2a\gamma(p)}}W(\mathbf{1},(a^2\gamma+2\beta a+\alpha)p^{-1}).\\
\intertext{Indeed, we note that the expression $a^2\gamma+2\beta a+\alpha$ is well-defined modulo $p^2$.  Moreover, a calculation shows that if $a$ satisfies the congruence conditions above, then $4\gamma(a^2\gamma+2\beta a+\alpha)$ is congruent to $4\det(S)$ modulo $p^2$.  Hence our inner sum is}
&\begin{cases} p^{2} W(\chi,\alpha)W(\mathbf{1},4\det(S)p^{-1})&\text{if}\,p\nmid \alpha,\,\chi(\alpha)=\chi(\gamma),\, p\mid 4\det(S),\\
0&\text{otherwise.}
\end{cases}
\end{align*}
Continuing as in previous cases, we find that 
\begin{align*}
(F|_k\mathcal{T}_\chi^6)(Z)=p^{2k-4}\sum_{\substack{S\in A(Np^4)^+\\p^4\mid\mid\alpha,\, p^5\mid 4\det(S)\\\chi(\alpha p^{-4})=\chi(\gamma)}}a(S[\begin{bmatrix}p^{-2}&\\&1\end{bmatrix}])W(\chi,\alpha p^{-4})W(\mathbf{1},4\det(S)p^{-5})e^{2\pi i \trace(SZ)}.
\end{align*}
%Case l=7
\noindent{\bf The case $l=7$:}
From \cite{JR3} Theorem 3.1, we have that
\begin{multline*}
(F|_k\mathcal{T}_\chi^7)(Z)=\\p^{k-7}\sum_{S\in A(N)^+}a(S)\sum_{a\in(\Z/p^3\Z)^\times}\chi(a)\big(\sum_{\substack{ b\in(\Z/p\Z)^\times\\z\in\Z/p^4\Z}}\chi(b)e^{2\pi i \trace(S\begin{bmatrix} zp^{-4} & b p^{-1}\\ bp^{-1} &\end{bmatrix})}\big)e^{2\pi i \trace(S[\begin{bmatrix} 1 & -ap^{-1}\\ &p\end{bmatrix}]Z)}.
\end{multline*}
The inner sum is calculated as 
\begin{align*}
\sum_{\substack{ b\in(\Z/p\Z)^\times\\z\in\Z/p^4\Z}}\chi(b)  e^{2\pi i \big(\alpha z p^{-4}+2\beta b p^{-1}\big)}
=\begin{cases}p^{4}W(\chi,2\beta) & p^4\mid\alpha,\\
0&\text{otherwise}.
\end{cases}
\end{align*}
Now, consider the map $S\mapsto {}^tPSP$, for $P=\left[\begin{smallmatrix}1 & -ap^{-1}\\ &p\end{smallmatrix}\right]$, first interchanging the order of summation.  Evidently, this map is a bijection between $A(Np^4)^+$ and $\{S\in A(Np^4)^+: p\mid\mid2\beta\,\text{and}\,\gamma\equiv-2\beta p^{-1}a(p^2)\}$.  Continuing the calculation, we have
 \begin{align*}
&=p^{k-3}\sum_{\substack{S\in A(Np^4)^+}}\sum_{a\in(\Z/p^3\Z)^\times}\chi(a)a(S)W(\chi,2\beta)e^{2\pi i \trace(S[\begin{bmatrix} 1 & -ap^{-1}\\ &p\end{bmatrix}]Z)}\\
&=p^{k-3}\sum_{a\in(\Z/p^3\Z)^\times}\sum_{\substack{S\in A(Np^4)^+\\p||2\beta,\,2\beta p^{-1}a\equiv -\gamma(p^2)}}\chi(a)a(S[\begin{bmatrix}1&ap^{-2}\\&p^{-1}\end{bmatrix}])W(\chi,2(\beta p^{-1}+a\alpha p^{-2}))e^{2\pi i \trace(SZ)}\\
&=p^{k-2}\sum_{\substack{S\in A(Np^4)^+\\p||2\beta}}\,\,\sum_{\substack{a\in(\Z/p^2\Z)^\times\\2\beta p^{-1}a\equiv -\gamma(p^2)}}a(S[\begin{bmatrix}1&ap^{-2}\\&p^{-1}\end{bmatrix}])W(\chi,-\gamma)e^{2\pi i \trace(SZ)}.\\
\end{align*}
Notice that there is exactly one $a$ which satisfies the congruence condition.\\

%Case l=8
\noindent{\bf The case $l=8$:}
From \cite{JR3} we have that
\begin{multline*}
(F|_k\mathcal{T}_\chi^8)(Z)=p^{2k-9}\sum_{S\in A(N)^+}a(S)\sum_{b\in(\Z/p^3\Z)^\times}\chi(b)\\\big(\sum_{\substack{a,z\in(\Z/p^3\Z)^\times\\z\not\equiv1(p)}}\chi(az(1-z))e^{2\pi i \trace(S\begin{bmatrix}ab(1-z)p^{-3}  & ap^{-1}\\ap^{-1}&\end{bmatrix})}\big)e^{2\pi i \trace(S[\begin{bmatrix}
p&bp^{-1}\\
&p
\end{bmatrix}]Z)}.
\end{multline*}
We calculate the inner sum
\begin{align*}
&\sum_{\substack{a,z\in(\Z/p^3\Z)^\times\\z\not\equiv1(p)}}\chi(az(1-z))e^{2\pi i a(\alpha b (1-z)+2\beta p^2)p^{-3}}\\
=&\sum_{\substack{a_0\in (\Z/p\Z)^\times\\z\in(\Z/p^3\Z)^\times\\z\not\equiv1(p)}}\sum_{a_1\in\Z/p^2\Z}\chi(a_0z(1-z))e^{2\pi i (a_0+pa_1)(\alpha b (1-z)+2\beta p^2)p^{-3}}\\
=&\begin{cases}p^{2}\sum\limits_{\substack{z\in(\Z/p^3\Z)^\times\\z\not\equiv1(p)}}\chi(z(1-z))W(\chi, b (1-z)\alpha p^{-2}+2\beta)&\text{if}\, p^2\mid\alpha\\
0&\text{otherwise.}
\end{cases}
\end{align*}
Now, consider the map $S\mapsto {}^tPSP$, for $P=\left[\begin{smallmatrix}p&bp^{-1}\\&p\end{smallmatrix}\right]$, first interchanging the order of summation.  Evidently, this map is a bijection between $A(Np^2)^+$ and $\{S\in A(Np^4)^+ : p^2\mid2\beta,\, p^2\mid f_S(b)\}.$  Continuing the calculation, we have
\begin{align*}
=&p^{2k-7}\sum_{\substack{S\in A(Np^2)^+}}\sum\limits_{\substack{b,z\in(\Z/p^3\Z)^\times\\z\not\equiv1(p)}}\chi(bz(1-z))a(S)W(\chi, b (1-z)\alpha p^{-2}+2\beta)e^{2\pi i \trace(S[
\begin{bmatrix}p&bp^{-1}\\&p\end{bmatrix}]Z)}\\
=&p^{2k-7}\sum\limits_{\substack{b,z\in(\Z/p^3\Z)^\times\\z\not\equiv1(p)}}\sum_{\substack{S\in A(Np^4)^+\\p^2\mid 2\beta,\,p^2\mid f_S(b)}}\chi(bz(1-z))a(S[\begin{bmatrix}p^{-1}&-bp^{-3}\\&p^{-1}\end{bmatrix}])\\
&\quad W(\chi, -b (z+1)\alpha p^{-4}+2\beta p^{-2})e^{2\pi i \trace(SZ)}\\
=&p^{2k-4}\sum_{\substack{S\in A(Np^4)^+\\\, p^2\mid 2\beta}}\,\,\sum\limits_{\substack{b\in(\Z/p^2\Z)^\times\\z\in(\Z/p\Z)^\times\\z\not\equiv1(p)\\p^2\mid f_S(b)}}\chi(z(1-z))a(S[\begin{bmatrix}p^{-1}&-bp^{-3}\\&p^{-1}\end{bmatrix}])\\
&\quad W(\chi,  -z\alpha p^{-4}b^2+\gamma)e^{2\pi i \trace(SZ)}.
\end{align*}
%The case l=9
\noindent{\bf The case $l=9$:}
From \cite{JR3} we have that 
\begin{multline*}
(F|_k \mathcal{T}^9_\chi)(Z)=\\p^{3k-6}\sum_{S\in A(N)^+}a(S)\sum_{a\in(\Z/p^2\Z)^\times}\big(\sum_{\substack{b,x\in(\Z/p\Z)^\times}} \chi(b)e^{2\pi i\trace(S \begin{bmatrix}bp^{-1}& \\&xp^{-1} \end{bmatrix})}\big)e^{2\pi i \trace(S[\begin{bmatrix}p^2&a\\&p \end{bmatrix}]Z)}.
\end{multline*}
The inner sum is calculated as 
\begin{align*}
&\sum_{\substack{b,x\in(\Z/p\Z)^\times}} \chi(b)
e^{2\pi i (\alpha b+\gamma x)p^{-1}}=W(\chi,\alpha)W(\mathbf{1},\gamma).
\end{align*}
Now, consider the map $S\mapsto {}^tPSP$, for $P=\left[\begin{smallmatrix}p^2&a\\&p\end{smallmatrix}\right]$, first interchanging the order of summation.  Evidently, this map is a bijection between $A(N)^+$ and $\{S\in A(Np^4)^+ : p^2\mid 2\beta,\, 2\alpha p^{-4}a\equiv2\beta p^{-2}(p),\,p^2\mid f_S(a)\}$.  Continuing the calculation, we have
\begin{align*}
=&p^{3k-6}\sum_{S\in A(N)^+}\sum_{a\in(\Z/p^2\Z)^\times}a(S)W(\chi,\alpha)W(\mathbf{1},\gamma)e^{2\pi i \trace(S[\begin{bmatrix}p^2&a\\&p\end{bmatrix}]Z)}\\
=&p^{3k-6}\sum_{a\in(\Z/p^2\Z)^\times}\sum_{\substack{S\in A(Np^4)^+\\p^2\mid2\beta,\,p^2\mid f_S(a)\\2\alpha p^{-4}a\equiv2\beta p^{-2}(p)}}a(S[\begin{bmatrix}p^{-2}&-ap^{-3}\\&p^{-1}\end{bmatrix}])W(\chi,\alpha p^{-4})W(\mathbf{1},f_S(a)p^{-2})e^{2\pi i \trace(SZ)}\\
=&p^{3k-5}\sum_{\substack{S\in A(Np^4)^+\\p^4\mid\mid\alpha,\,p^2\mid\mid 2\beta\\ p^6\mid4\det(S)}}\sum_{\substack{a\in(\Z/p\Z)^\times\\2\alpha p^{-4}a\equiv2\beta p^{-2}(p)}}a(S[\begin{bmatrix}p^{-2}&-ap^{-3}\\&p^{-1}\end{bmatrix}])W(\chi,\alpha p^{-4})\chi(-4\det(S)p^{-6})e^{2\pi i \trace(SZ)}\\
\end{align*}
Notice that at most one value of $a$ satisfies the condition in the final expression.\\

%The case l=10
\noindent{\bf The case $l=10$:}
From \cite{JR3} we have that
\begin{multline*}
(F|_k \mathcal{T}^{10}_\chi)(Z)=\\p^{4k-6}\sum_{S\in A(N)^+}a(S)\sum_{a\in(\Z/p^2\Z)^\times}\big(\sum_{\substack{b\in(\Z/p\Z)^\times}} \chi(b)e^{2\pi i\trace(S \begin{bmatrix}bp^{-1}& \\& \end{bmatrix})}\big)e^{2\pi i \trace(S[\begin{bmatrix}p^2&a\\&p^2 \end{bmatrix}]Z)}.
\end{multline*}
The inner sum is calculated as
\begin{align*}
\sum_{b\in(\Z/p\Z)^\times} \chi(b)e^{2\pi i \alpha b p^{-1}}=W(\chi,\alpha).
\end{align*}
Now, consider the map $S\mapsto {}^tPSP$, for $P=\left[\begin{smallmatrix}p^2&a\\&p^2\end{smallmatrix}\right]$, first interchanging the order of summation.  Evidently, this map is a bijection between $A(N)^+$ and $\{S\in A(Np^4)^+:p^4\mid\mid\alpha,\,p^2\mid2\beta,\,2\alpha p^{-4}a\equiv2\beta p^{-2}(p^2),\,p^4\mid f_S(a)\}$.  Continuing the calculation, we have
\begin{align*}
&=p^{4k-6}\sum_{S\in A(N)^+}\sum_{a\in(\Z/p^2\Z)^\times}a(S)W(\chi,\alpha)e^{2\pi i \trace( S[\begin{bmatrix}
p^2&a\\
&p^2
\end{bmatrix}]Z)}\\
&=p^{4k-6}\sum_{a\in(\Z/p^2\Z)^\times}\sum_{\substack{S\in A(Np^4)^+\\p^4\mid\mid\alpha,\,p^2\mid2\beta,\,p^4\mid f_S(a)\\2\alpha p^{-4}a\equiv2\beta p^{-2}(p^2)}}a(S[\begin{bmatrix}p^{-2}&-ap^{-4}\\&p^{-2}\end{bmatrix}])W(\chi,\alpha p^{-4})e^{2\pi i \trace( SZ)}\\
&=p^{4k-6}\sum_{\substack{S\in A(Np^4)^+\\p^4\mid\mid\alpha,\,p^2\mid2\beta\\p^{8}\mid 4\det(S)}}\sum_{\substack{a\in(\Z/p^2\Z)^\times\\2\alpha p^{-4}a\equiv2\beta p^{-2}(p^2)}}a(S[\begin{bmatrix}p^{-2}&-ap^{-4}\\&p^{-2}\end{bmatrix}])W(\chi,\alpha p^{-4})e^{2\pi i \trace( SZ)}.
\end{align*}
There is at most one such value of $a$.\\

%case l=11
\noindent{\bf The case $l=11$:}
From \cite{JR3} we have 
\begin{multline*}
(F|_k \mathcal{T}^{11}_\chi)(Z)=p^{-k-10}\sum_{S\in A(N)^+}a(S)\sum_{b\in(\Z/p^4\Z)^\times}\chi(b)\\\big(\sum_{\substack{a\in(\Z/p^2\Z)^\times\\x\in\Z/p^3\Z\\z\in\Z/p^4\Z}} \chi(a)e^{2\pi i\trace(S \begin{bmatrix}zp^{-4} & (ap+xb)p^{-3} \\ (ap+xb)p^{-3}&xp^{-2} \end{bmatrix})}\big)e^{2\pi i \trace(S[\begin{bmatrix} 1 & bp^{-2}   \\ &p^{-1} \end{bmatrix}]Z)}
\end{multline*}
The inner sum is calculated as
\begin{align*}
\sum_{\substack{a\in(\Z/p^2\Z)^\times\\x\in\Z/p^3\Z\\z\in\Z/p^4\Z}}\chi(a)  e^{2\pi i (\alpha z+2\beta(ap+bx)p+\gamma x p^2)p^{-4}}=&\begin{cases}
p^8W(\chi,2\beta p^{-1})&\text{if}\, p^4\mid \alpha,\, p^3\mid(\gamma p+2\beta b),\\
0&\text{otherwise}.
\end{cases}\\
\end{align*}
Now, consider the map $S\mapsto {}^tPSP$, for $P=\left[\begin{smallmatrix} 1 & bp^{-2}   \\ &p^{-1}\end{smallmatrix}\right]$, first interchanging the order of summation.  Evidently, this map is a bijection between $\{S\in A(Np^4): p^2\mid\mid 2\beta,\,p^2\mid(\gamma p+2\beta b)\}$ and $\{S\in A(Np^4)^+:p\nmid 2\beta\}$.  Continuing the calculation, we have
\begin{align*}
=&p^{-k-2}\sum_{\substack{b\in(\Z/p^4\Z)^\times}}\sum_{\substack{S\in A(Np^4)^+\\p\mid\mid2\beta\\p^3\mid(\gamma p+2\beta b)}}\chi(b)a(S)W(\chi,2\beta p^{-1})e^{2\pi i \trace(S[
\begin{bmatrix} 1 & bp^{-2}   \\ &p^{-1} \end{bmatrix}]Z)}\\
=&p^{-k-2}\sum_{\substack{b\in(\Z/p^4\Z)^\times}}\sum_{\substack{S\in A(Np^4)^+\\p\nmid2\beta}}\chi(b)a(S[\begin{bmatrix}1&-bp^{-1}\\&p\end{bmatrix}])W(\chi,2\beta)e^{2\pi i \trace(SZ)}\\
=&p^{1-k}\sum_{\substack{S\in A(Np^4)^+\\p\nmid2\beta}}\sum_{\substack{b\in(\Z/p\Z)^\times}}\chi(b)a(S[\begin{bmatrix}1&-bp^{-1}\\&p\end{bmatrix}])W(\chi,2\beta)e^{2\pi i \trace(SZ)}.
\end{align*}
%case l=12
\noindent{\bf The case $l=12$:}
From \cite{JR3} we have
\begin{multline*}
(F|_k \mathcal{T}^{12}_\chi)(Z)=p^{-12}\sum_{S\in A(N)^+}a(S)\sum_{a\in(\Z/p^4\Z)^\times}\chi(a)\\\big(\sum_{\substack{y\in\Z/p^4\Z\\b,z\in(\Z/p^3\Z)^\times\\z\not\equiv1(p)}} \chi(bz(1-z))e^{2\pi i\trace(S \begin{bmatrix}a(y-b(1-z)p)p^{-4}&yp^{-3}\\yp^{-3} &a^{-1}(y+bp)p^{-2} \end{bmatrix})}\big)e^{2\pi i \trace(S[\begin{bmatrix} p&ap^{-2}\\&p^{-1} \end{bmatrix}]Z)}.
\end{multline*}
The inner sum is given by 
\begin{align*}
&\sum_{\substack{y\in\Z/p^4\Z\\b,z\in(\Z/p^3\Z)^\times\\z\not\equiv1(p)}} \chi(bz(1-z))e^{2\pi i \big(y(\alpha a+2\beta p +\gamma p^2 a^{-1})p^{-4}+b(\gamma a^{-1}p^2+\alpha(z-1)a)p^{-3}\big)}.\\
\intertext{The sum on the $y$ variable implies that this vanishes unless $p^4\mid \alpha a^2+2\beta ap +\gamma p^2$.  We assume this.  Continuing, the sum is now}
&p^4\sum\limits_{\substack{b_0\in(\Z/p\Z)^{\times}\\z\in(\Z/p^3\Z)^\times\\z\not\equiv1(p)}} \sum\limits_{b_1\in\Z/p^2\Z}\chi(b_0z(1-z))e^{2\pi i (b_0+pb_1)(\gamma a^{-1}p^2+\alpha(z-1)a)p^{-3}}.\\
\intertext{The sum on $b_1$ implies that the expression vanishes unless $p^2\mid\alpha$.  Hence the inner sum is given by}
&\begin{cases} p^{6}\sum\limits_{\substack{z\in(\Z/p^3\Z)^\times\\z\not\equiv1(p)}} \chi(z(1-z))W(\chi,\gamma a^{-1} +\alpha p^{-2}(z-1)a)& \text{if}\,p^4\mid (\alpha a^2+2\beta ap +\gamma p^2),\, p^2\mid\alpha,\\
0&\text{otherwise}.
\end{cases}\\
\end{align*}
Reasoning as in the previous cases we conclude that
\begin{multline*}
(F|_k\mathcal{T}^{12}_\chi)(Z)=\\p^{-1}\sum_{\substack{S\in A(Np^4)^+\\p\mid2\beta}}\sum_{\substack{a,z\in(\Z/p\Z)^\times\\z\not\equiv1(p)}} \chi(az(1-z))a(S[\begin{bmatrix}p^{-1}&-ap^{-2}\\&p\end{bmatrix}])W(\chi,az\alpha p^{-4}-2\beta p^{-1})e^{2\pi i \trace(SZ)}.\\
\end{multline*}
%case l=13
\noindent{\bf The case $l=13$:}
From \cite{JR3} we have
\begin{multline*}
(F_k\mathcal{T}^{13}_\chi)(Z)=\\p^{k-6}\sum_{S\in A(N)^+}a(S)\big(\sum_{\substack{a\in(\Z/p^2\Z)^\times\\b\in(\Z/p^3\Z)^\times\\x\in(\Z/p\Z)^\times}} \chi(bx)e^{2\pi i\trace(S \begin{bmatrix}b(1+x)p^{-1}&ap^{-2}\\
ap^{-2}&a^2b^{-1}p^{-3} \end{bmatrix})}\big)e^{2\pi i \trace(S[\begin{bmatrix} p^2&\\
&p^{-1}
\end{bmatrix}]Z)}.
\end{multline*}
The inner sum is given by
\begin{align*}
&\sum_{\substack{a\in(\Z/p^2\Z)^\times\\b\in(\Z/p^3\Z)^\times\\x\in(\Z/p\Z)^\times}} \chi(bx)e^{2\pi i \big(\gamma a^2 b^{-1}+\alpha b(x+1)p^2+2\beta a p\big)p^{-3}} \\
=&W(\chi,\alpha)\sum_{\substack{a\in(\Z/p^2\Z)^\times\\b\in(\Z/p^3\Z)^\times}} e^{2\pi i (\gamma a^2 p^{-2}b^{-1}+2\beta a p^{-1}+\alpha b)p^{-1}} \\
=&W(\chi,\alpha)\sum_{\substack{a\in(\Z/p^2\Z)^\times\\b_0\in(\Z/p\Z)^\times\\b_1\in\Z/p^2\Z}} e^{2\pi i (\gamma a^2 +2\beta a p+\alpha p^2)(b_0+pb_1)p^{-3}} \\
=&W(\chi,\alpha)\sum_{\substack{a\in(\Z/p^2\Z)^\times\\b_0\in(\Z/p\Z)^\times}} e^{2\pi i (\gamma a^2 +2\beta a p+\alpha p^2)b_0p^{-3}}\sum_{b_1\in\Z/p^2\Z} e^{2\pi i (\gamma a^2 +2\beta a p)b_1p^{-2}}\\
=&W(\chi,\alpha)\sum_{\substack{a\in(\Z/p^2\Z)^\times\\b_0\in(\Z/p\Z)^\times}} e^{2\pi i (\gamma a^2 +2\beta a p+\alpha p^2)b_0p^{-3}}\sum_{b_2,b_3\in\Z/p\Z} e^{2\pi i (\gamma a^2 +2\beta a p)(b_2+pb_3)p^{-2}}\\
=&W(\chi,\alpha)\sum_{\substack{a\in(\Z/p^2\Z)^\times\\b_0\in(\Z/p\Z)^\times}} e^{2\pi i (\gamma a^2 +2\beta a p+\alpha p^2)b_0p^{-3}}\sum_{b_2\in\Z/p\Z} e^{2\pi i (\gamma a^2 +2\beta a p)b_2p^{-2}}\sum_{b_3\in\Z/p\Z}e^{2\pi i (\gamma a^2)b_3p^{-1}}.\\
\intertext{This is zero unless $p\mid\gamma$.  Assume this.  Continuing the calculation we have}
=&pW(\chi,\alpha)\sum_{\substack{a\in(\Z/p^2\Z)^\times\\b_0\in(\Z/p\Z)^\times}} e^{2\pi i (\gamma a^2 +2\beta a p+\alpha p^2)b_0p^{-3}}\sum_{b_2\in\Z/p\Z} e^{2\pi i (\gamma p^{-1} a+2\beta  )b_2p^{-1}}\\
=&p^2W(\chi,\alpha)\sum_{\substack{a_0,b_0\in(\Z/p\Z)^\times\\a_1\in\Z/p\Z\\\gamma p^{-1} a_0\equiv-2\beta (p)}} e^{2\pi i (\gamma (a_0+pa_1)^2 +2\beta (a_0+pa_1)p+\alpha p^2)b_0p^{-3}}\\
=&p^2W(\chi,\alpha)\sum_{\substack{a_0,b_0\in(\Z/p\Z)^\times\\\gamma p^{-1} a_0\equiv-2\beta (p)}} e^{2\pi i (\gamma a_0^2 +2\beta a_0p+\alpha p^2)b_0p^{-3}}\sum_{a_1\in\Z/p\Z}e^{2\pi i (\gamma p^{-1}2a_0 +2\beta )a_1b_0p^{-1}}.\\
\intertext{Evidently, this is zero unless $p^2\mid\gamma$.  Assume this.  Then it follows that the sum is zero unless $p\mid 2\beta$.  We further assume this so that our sum is}
=&p^3W(\chi,\alpha)\sum_{\substack{a_0,b_0\in(\Z/p\Z)^\times}} e^{2\pi i (\gamma p^{-2}a_0^2 +2\beta p^{-1}a_0+\alpha)b_0p^{-1}}\\
=&p^3W(\chi,\alpha)\sum_{\substack{a_0\in(\Z/p\Z)^\times}} W(\mathbf{1},\gamma p^{-2}a_0^2 +2\beta p^{-1}a_0+\alpha).
\end{align*}
To calculate this, we notice first that the sum is zero if $p\mid\alpha$, so assume that $p\nmid \alpha$.  We assume first that $p^3\mid \gamma$.  In this case, 
\begin{align*}
\sum_{\substack{a_0\in(\Z/p\Z)^\times}} W(\mathbf{1},\gamma p^{-2}a_0^2 +2\beta p^{-1}a_0+\alpha)
=&-W(\mathbf{1},\alpha)+\sum_{\substack{a_0\in\Z/p\Z}} W(\mathbf{1},2\beta p^{-1}a_0+\alpha)\\
=&1+\sum_{\substack{a_0\in\Z/p\Z}} W(\mathbf{1},2\beta p^{-1}a_0+\alpha)\\
=&\begin{cases}1&\text{if}\quad p^2\nmid2\beta,\\
1-p&\text{if}\quad p^2\mid2\beta.\end{cases}
\end{align*}
Now, we assume that $p^2\mid\mid\gamma$.  In this case, the Gauss sum $W(\mathbf{1},\gamma p^{-2}a_0^2 +2\beta p^{-1}a_0+\alpha)$ is $p-1$ if $a_0$ is a root of the polynomial $x^2\gamma p^{-2} +2x\beta p^{-1}+\alpha $ modulo $p$ and -1 otherwise.  Thus, we need only count the solutions for a given $S$ modulo $p$.  The discriminant of the polynomial is $-4\det(S)p^{-2}$.  Hence, there are no solutions when $-4\det(S)p^{-2}$ is not a square mod $p$, one solution when $p\mid(-4\det(S)p^{-2})$, and two solutions when $-4\det(S)p^{-2}$ is a square mod $p$ and $p\nmid-4\det(S)p^{-2}$.  Thus, 
\begin{align*}
\sum_{\substack{a_0\in(\Z/p\Z)^\times}} W(\mathbf{1},\gamma p^{-2}a_0^2 +2\beta p^{-1}a_0+\alpha)
=&-W(\mathbf{1},\alpha)+\sum_{\substack{a_0\in\Z/p\Z}} W(\mathbf{1},\gamma p^{-2}a_0^2 +2\beta p^{-1}a_0+\alpha)\\
=&1+p\chi(-4\det(S)p^{-2}).\\
\end{align*}
To summarize,  we find the the inner sum is given by the formula
$$
p^3W(\chi,\alpha)(p\chi(-4\det(S)p^{-2})-W(\mathbf{1},\gamma p^{-2})).
$$

Now, consider the map $S\mapsto {}^tPSP$, for $P=\left[\begin{smallmatrix} p^2 &   \\ &p^{-1}\end{smallmatrix}\right]$, first interchanging the order of summation.  Evidently, this map is a bijection between $\{S\in A(N):p\mid2\beta,\,p^2\mid\gamma\}$ and $\{S\in A(Np^4):p^2\mid 2\beta\}$. Continuing the calculation, we have 
\begin{align*}
=&p^{k-3}\sum_{\substack{S\in A_(N)^+\\p^2\mid\gamma,\,p\mid 2\beta}}a(S)W(\chi,\alpha)(p\chi(-4\det(S)p^{-2})-W(\mathbf{1},\gamma p^{-2}))e^{2\pi i \trace(S[\begin{bmatrix}p^2&\\&p^{-1}\end{bmatrix}]Z)}\\
=&p^{k-3}\sum_{\substack{S\in A(Np^4)^+\\p^2\mid 2\beta}}a(S[\begin{bmatrix}p^{-2}&\\&p\end{bmatrix}])W(\chi,\alpha p^{-4})(p\chi(-4\det(S)p^{-4})-W(\mathbf{1},\gamma))e^{2\pi i \trace(SZ)}.
\end{align*}
%Case l=14
\noindent{\bf The case $l=14$:}
From \cite{JR3} we have that
\begin{multline*}
(F|_k\mathcal{T}^{14}_\chi)(Z)=
p^{-6}\sum_{S\in A(N)^+}a(S)\big(\sum_{\substack{a\in(\Z/p^2\Z)^\times\\b\in(\Z/p\Z)^\times\\x\in\Z/p^4\Z}} \chi(b)e^{2\pi i \trace(S\begin{bmatrix}
bp^{-1}&ap^{-2}\\
ap^{-2}&x p^{-4}
\end{bmatrix})}e^{2\pi i \trace(S[\begin{bmatrix} p^2 & \\ & p^{-2} \end{bmatrix}]Z)}\big).
\end{multline*}
The inner sum is calculated as
\begin{align*}
\sum_{\substack{a\in(\Z/p^2\Z)^\times\\b\in(\Z/p\Z)^\times\\x\in\Z/p^4\Z}} \chi(b)
e^{2\pi i (x\gamma p^{-4}+\alpha b p^{-1}+2\beta a p^{-2} )}
=\begin{cases}p^{5}W(\chi,\alpha)W(\mathbf{1},2\beta p^{-1})&p^4\mid\gamma,\, p\mid2\beta,\\
0&\text{otherwise}.
\end{cases}\\
\end{align*}
Reasoning as in the previous cases we conclude that
\begin{align*}
(F|_k\mathcal{T}^{14}_\chi)(Z)
=&p^{-1}\sum_{\substack{S\in A(Np^4)^+\\p\mid2\beta}}a(S[\begin{bmatrix}p^{-2}&\\&p^2\end{bmatrix}])W(\chi,\alpha p^{-4})W(\mathbf{1},2\beta p^{-1})e^{2\pi i \trace(SZ)}.
\end{align*}

We now turn to the five assertions of the theorem. For each $S\in A(Np^4)^+$, we have that 
$W(\chi)a_\chi(S)=\sum_{l=1}^{14}a_l(S),
$
where $a_l(S)$ are given above.  \\
%PROOF of 5 statements begins
%proof of (i)

\noindent{\bf Proof of (i):} Assume that $S\in A(Np^4)^+$ is such that $p\nmid 2\beta$.  Then, $a_l(S)=0$ for all $l\neq 11$.
Hence, 
$$a_\chi(S)=p^{1-k}\chi(2\beta)\sum_{\substack{b\in(\Z/p\Z)^\times}}\chi(b)a(S[\begin{bmatrix}1&-bp^{-1}\\&p\end{bmatrix}]),$$
which proves statement (i).\\
%proof of (ii)

\noindent{\bf Proof of (ii):} Assume that $S\in A(Np^4)^+$ is such that $p\mid\mid2\beta$ and $p^4\mid\mid\alpha$.  Then, $a_l(S)=0$ for $l\neq1,4,7,12,14$.  Hence,
\begin{align*}
a_\chi(S)=&p^{-1}\sum_{\substack{a,b\in(\Z/p\Z)^\times}}\chi\big(ab( -\gamma+2\beta p^{-1}a)\big)a(S[\begin{bmatrix}1&-(a+b)p^{-1}\\&1\end{bmatrix}])\\
 &+p^{k-2}\chi(-\alpha p^{-4})\sum_{\substack{x\in(\Z/p^2\Z)^\times\\x\alpha p^{-4}\equiv2\beta p^{-1}(p^2)}}a(S[\begin{bmatrix}p^{-1}&-xp^{-3}\\&1\end{bmatrix}]) \\
 &+p^{k-2}\chi(-\gamma)\sum_{\substack{a\in(\Z/p^2\Z)^\times\\2\beta p^{-1}a\equiv -\gamma(p^2)}}a(S[\begin{bmatrix}1&ap^{-2}\\&p^{-1}\end{bmatrix}])\\
 &+p^{-1}\sum_{\substack{a,z\in(\Z/p\Z)^\times\\z\not\equiv1(p)}} \chi\big(az(1-z)(az\alpha p^{-4}-2\beta p^{-1})\big)a(S[\begin{bmatrix}p^{-1}&-ap^{-2}\\&p\end{bmatrix}])\\
  &+p^{-1}\chi(\alpha p^{-4})W(\mathbf{1},2\beta p^{-1})a(S[\begin{bmatrix}p^{-2}&\\&p^2\end{bmatrix}]),
 \end{align*}
 which proves statement (ii).\\

 \noindent{\bf Proof of (iii):} Assume that $S\in A(Np^5)^+$ is such that $p\mid \mid 2\beta$.  Then, $a_l(S)=0$ for $l\neq1,7,12$.  Hence,
 \begin{align*}
 a_\chi(S)  =&p^{-1}\sum_{\substack{a,b\in(\Z/p\Z)^\times}}\chi(ab)\chi( -\gamma+2\beta p^{-1}a)a(S[\begin{bmatrix}1&-(a+b)p^{-1}\\&1\end{bmatrix}])\\
 &+p^{k-2}\chi(-\gamma)\sum_{\substack{a\in(\Z/p^2\Z)^\times\\2\beta p^{-1}a\equiv -\gamma(p^2)}}a(S[\begin{bmatrix}1&ap^{-2}\\&p^{-1}\end{bmatrix}])\\
 &+p^{-1}\sum_{\substack{a,z\in(\Z/p\Z)^\times\\z\not\equiv1(p)}} \chi(az(1-z))\chi(-2\beta p^{-1})a(S[\begin{bmatrix}p^{-1}&-ap^{-2}\\&p\end{bmatrix}])\\
   =&p^{-1}\sum_{\substack{a,b\in(\Z/p\Z)^\times}}\chi\big(ab( -\gamma+2\beta p^{-1}a)\big)a(S[\begin{bmatrix}1&-(a+b)p^{-1}\\&1\end{bmatrix}])\\
 &-p^{-1}\chi(2\beta p^{-1})\sum_{\substack{a\in(\Z/p\Z)^\times}}\chi(a)a(S[\begin{bmatrix}p^{-1}&-ap^{-2}\\&p\end{bmatrix}])\\
  &+p^{k-2}\chi(-\gamma)a(S[\begin{bmatrix}1&yp^{-2}\\&p^{-1}\end{bmatrix}]),
 \end{align*}
 where $y\in(\Z/p^2\Z)^\times$ is such that $2\beta p^{-1}y\equiv -\gamma(p^2)$, and we note that 
 $$
 \sum_{\substack{z\in(\Z/p\Z)^\times\\z\not\equiv1(p)}}\chi(z(1-z))=-\chi(-1),
 $$
 by Lemma \ref{jlemma}. This completes the proof of the third assertion.\\
 
 %proof of iv
 \noindent{\bf Proof of (iv):} Assume that $S\in A(Np^4)^+$ is such that $p^2\mid 2\beta$ and $p^4\mid\mid\alpha$.  Then $a_l(S)=0$ for $l=4,7,11$.  We define $W(\chi)b_\chi(S)=a_1(S)+a_2(S)+a_3(S)+a_5(S)+a_8(S)+a_{12}(S)+a_{13}(S)+a_{14}(S)$,  $W(\chi)c_\chi(S)=a_6(S)$, and $W(\chi)d_\chi(S)=a_9(S)+a_{10}(S)$.  Substituting the expressions for $a_l$ from above and applying the condition that $p^2\mid2\beta$, we have the desired expression for $b_\chi(S)$, $c_\chi(S)$, and $d_\chi(S)$.\\

 %proof of v
 \noindent{\bf Proof of (v):} Assume that $S\in A(Np^5)^+$ is such that $p^2\mid 2\beta$.  Then $a_l(S)=0$ for all $l\neq1,2,3,8$.  Hence,
 \begin{align*}
 a_\chi(S)
 =&p^{-1}\chi(-\gamma)\sum_{\substack{a,b\in(\Z/p\Z)^\times}}\chi(ab)a(S[\begin{bmatrix}1&-(a+b)p^{-1}\\&1\end{bmatrix}])\\
 &-p^{k-3}\sum_{\substack{b,x\in(\Z/p\Z)^\times\\x\not\equiv1(p)}}\chi\big((1-x)(-\gamma x^{-1}+2\beta  bp^{-2})\big)a(S[\begin{bmatrix}p^{-1}&-bp^{-2}\\&1\end{bmatrix}])\\
&+p^{k-3}\chi(\gamma)(1-p+p\chi(-4\det(S)p^{-4}))a(S[\begin{bmatrix}p^{-1}&\\&1\end{bmatrix}])\\
&+p^{2k-4}\chi( \gamma)\sum\limits_{\substack{b\in(\Z/p^2\Z)^\times\\z\in(\Z/p\Z)^\times\\z\not\equiv1(p)\\p^2\mid(\alpha p^{-4}b^2-2\beta p^{-2}b+\gamma)}}\chi(z(1-z))a(S[\begin{bmatrix}p^{-1}&-bp^{-3}\\&p^{-1}\end{bmatrix}]).
 \end{align*}
 By Lemma \ref{jlemma}, we have the result.  This completes the proof of the theorem.
\end{proof}
%ENDOFPROOFOFMAINTHEOREM

%Proof of Corollary
\begin{proof}[Proof of Corollary \ref{corollary}]
Let $F$ be in $S_k(\Gamma^{\mathrm{para}}(1))=S_k(\SSp(4,\Z))$ and assume further that $F$ is in the Maass space.  Then by \cite{EZ} Theorem 2.2 and by formula (19) of \cite{Ma}, there exists a Jacobi form 
$$\phi(\tau,z)=\sum_{\substack{D,2\beta\in\Z, D\leq 0\\D\equiv (2\beta)^2(4)}}C_\phi(D)q^{((2\beta)^2-D)/4}\zeta^{2\beta}\,\in\, J_{k,1},$$
with $q=e^{2\pi i\tau}$ and $\zeta=e^{2\pi i z}$, such that the Fourier coefficients of $F$ are given by
\begin{equation}\label{maassfourier}
a(S)=\sum_{d\mid(\alpha,2\beta,\gamma)}d^{k-1}C_\phi(\frac{-4\det(S)}{d^2})=\sum_{d\mid(\alpha,2\beta,\gamma)}d^{k-1}C_\phi(\frac{D(S)}{d^2}),
\end{equation}
for $S\in A(1)^+$ and $D(S)=-4\det(S)$.  We show that $a_\chi(S)=0$ for every $S\in A(p^4)^+$ by considering cases (i)--(v) of Theorem \ref{maintheorem}.  Let $S\in A(p^4)^+$, as in \eqref{A(N)definition}.  To simplify the calculations, we note the following fact.  Let $q\neq p$ be a prime, $P$ be a matrix of the form $\left[\begin{smallmatrix}p^j&ap^k\\&p^l\end{smallmatrix}\right]$ with $a, j, k, l \in\Z$, and let $S[P]=\left[\begin{smallmatrix}\alpha'&\beta'\\\beta'&\gamma'\end{smallmatrix}\right]$; then $q^t\mid(\alpha,2\beta,\gamma)$ if and only if $q^t\mid(\alpha',2\beta',\gamma')$, for $t$ a non-negative integer. Hence if $f:\Z\rightarrow\C$ is a function then
$$
\sum_{d\mid(\alpha',2\beta',\gamma')}f(d)=\sum_{\substack{k\in\Z_{\geq0}\\p^k\mid(\alpha',2\beta',\gamma')}}\sum_{\substack{d\mid(\alpha,2\beta,\gamma)\\p\nmid d}}f(p^kd).
$$
%Case i
\noindent{\bf Case (i)}:  We assume that $p\nmid 2\beta$.  Then by Theorem \ref{maintheorem}, we have
\begin{align*}
a_\chi(S)=&p^{1-k}\chi(2\beta)\sum_{\substack{b\in(\Z/p\Z)^\times}}\chi(b)a(S[\begin{bmatrix}1&-bp^{-1}\\&p\end{bmatrix}])\\
=&p^{1-k}\chi(2\beta)\sum_{\substack{b\in(\Z/p\Z)^\times}}\chi(b)a(\begin{bmatrix}\alpha&p\beta-b\alpha p^{-1}\\p\beta-b\alpha p^{-1}&p^2\gamma-2b\beta+b^2\alpha p^{-2}\end{bmatrix})\\
=&p^{1-k}\chi(2\beta)\sum_{\substack{b\in(\Z/p\Z)^\times}}\chi(b)\sum_{d\mid(\alpha,2\beta, \gamma)}d^{k-1}C_\phi(\frac{p^2D(S)}{d^2})\\
=&0.
\end{align*}
%Case ii
\noindent{\bf Case (ii)}: We assume that $p^4\mid\mid\alpha$ and $p\mid\mid 2\beta$.  Let $x\alpha p^{-4}\equiv2\beta p^{-1}(p^2)$ and $y2\beta p^{-1}\equiv -\gamma(p^2)$.  Then by Theorem \ref{maintheorem}, we have
\begin{align*}
a_\chi(S)&=p^{-1}\sum_{\substack{a,b\in(\Z/p\Z)^\times}}\chi\big(ab(2\beta p^{-1}a -\gamma)\big)a(S[\begin{bmatrix}1&-(a+b)p^{-1}\\&1\end{bmatrix}])\\\nonumber
 &+p^{-1}\sum_{\substack{a,z\in(\Z/p\Z)^\times\\z\not\equiv1(p)}} \chi\big(az(1-z)(az\alpha p^{-4}-2\beta p^{-1})\big)a(S[\begin{bmatrix}p^{-1}&-ap^{-2}\\&p\end{bmatrix}])\\\nonumber
&-p^{-1}\chi(\alpha p^{-4})a(S[\begin{bmatrix}p^{-2}&\\&p^2\end{bmatrix}])+p^{k-2}\chi(-\alpha p^{-4})a(S[\begin{bmatrix}p^{-1}&-xp^{-3}\\&1\end{bmatrix}]) \\
&+p^{k-2}\chi(-\gamma)a(S[\begin{bmatrix}1&yp^{-2}\\&p^{-1}\end{bmatrix}])\\
=&p^{-1}\sum_{\substack{a,b\in(\Z/p\Z)^\times}}\chi\big(ab(2\beta p^{-1}a -\gamma)\big)a(\begin{bmatrix}\alpha&\beta-\alpha p^{-1}(a+b)\\\beta-\alpha p^{-1}(a+b)&\gamma-2\beta p^{-1}(a+b)+\alpha p^{-2}(a+b)^2\end{bmatrix})\\
 &+p^{-1}\sum_{\substack{a,z\in(\Z/p\Z)^\times\\z\not\equiv1(p)}}\chi\big(az(1-z)(az\alpha p^{-4}-2\beta p^{-1})\big)a(\begin{bmatrix}\alpha p^{-2}&\beta-a\alpha p^{-3}\\\beta-a\alpha p^{-3}&p^2\gamma-2\beta p^{-1}a+\alpha p^{-4}a^2\end{bmatrix})\\
&-p^{-1}\chi(\alpha p^{-4})a(\begin{bmatrix}\alpha p^{-4}&\beta\\\beta&p^4\gamma\end{bmatrix})\\
&+p^{k-2}\chi(-\alpha p^{-4})a(\begin{bmatrix}\alpha p^{-2}&\beta p^{-1}-x\alpha p^{-4}\\\beta p^{-1}-x\alpha p^{-4}&\gamma-(2\beta p^{-1}-x\alpha p^{-4})xp^{-2}\end{bmatrix}) \\
&+p^{k-2}\chi(-\gamma)a(\begin{bmatrix}\alpha&\beta p^{-1}+y\alpha p^{-2}\\\beta p^{-1}+y\alpha p^{-2}&(\gamma+2\beta p^{-1}y)p^{-2}+y^2\alpha p^{-4}\end{bmatrix}).
\end{align*}
To further evaluate this Fourier coefficient, we will use formula \eqref{maassfourier} and the remarks from the beginning of the proof. We consider the first summand and notice that the $p\mid(\alpha',2\beta',\gamma')$ if and only if $\gamma\equiv2\beta p^{-1}(a+b)\bmod p$.  Therefore the first summand is
\begin{align*}
&p^{-1}\Big(\!\!\!\!\!\sum_{\substack{a,b\in(\Z/p\Z)^\times\\ \gamma\equiv2\beta p^{-1}(a+b)\bmod p}}\!\!\!\!\!\chi\big(ab(2\beta p^{-1}a -\gamma)\big)\Big)\cdot\sum_{\substack{ d\mid(\alpha,2\beta, \gamma)\\p\nmid d}}\big(d^{k-1}C_\phi(\frac{D(S)}{d^2})+(pd)^{k-1}C_\phi(\frac{D(S)}{(pd)^2})\big)\\
&+p^{-1}\Big(\sum_{\substack{a,b\in(\Z/p\Z)^\times\\ \gamma\not\equiv2\beta p^{-1}(a+b)\bmod p}}\chi\big(ab(2\beta p^{-1}a -\gamma)\big)\Big)\cdot\sum_{\substack{d\mid(\alpha,2\beta, \gamma)\\p\nmid d}}d^{k-1}C_\phi(\frac{D(S)}{d^2})\\
=&p^{-1}\Big(\!\!\!\!\!\sum_{\substack{a,b\in(\Z/p\Z)^\times\\ \gamma\equiv2\beta p^{-1}(a+b)\bmod p}}\!\!\!\!\!\chi\big(ab(2\beta p^{-1}a -\gamma)\big)\Big)\cdot\sum_{\substack{ d\mid(\alpha,2\beta, \gamma)\\p\nmid d}}(pd)^{k-1}C_\phi(\frac{D(S)}{(pd)^2})\\
=&p^{-1}\Big(\!\!\!\!\!\sum_{\substack{a\in(\Z/p\Z)^\times\\ \gamma\not\equiv2\beta p^{-1}a\bmod p}}\!\!\!\!\!\chi\big(-2\beta p^{-1}a)\big)\Big)\cdot\sum_{\substack{ d\mid(\alpha,2\beta, \gamma)\\p\nmid d}}(pd)^{k-1}C_\phi(\frac{D(S)}{(pd)^2})\\
=&-p^{-1}\chi(-\gamma)\sum_{\substack{ d\mid(\alpha,2\beta, \gamma)\\p\nmid d}}(pd)^{k-1}C_\phi(\frac{D(S)}{(pd)^2}).
\end{align*}
We consider the second summand and notice that $p\mid(\alpha',2\beta',\gamma')$ if and only if $2\beta p^{-1}\equiv \alpha p^{-4}a (p)$, and moreover $p^2\nmid(\alpha',2\beta',\gamma')$.  Therefore the second summand is 
\begin{align*}
&p^{-1}\Big(\!\!\!\!\!\sum_{\substack{a,z\in(\Z/p\Z)^\times\\z\not\equiv 1(p)\\2\beta p^{-1}\equiv \alpha p^{-4}a (p)}}\!\!\!\!\!\chi\big(az(1-z)(az\alpha p^{-4}-2\beta p^{-1})\big)\Big)\cdot\sum_{\substack{ d\mid(\alpha,2\beta, \gamma)\\p\nmid d}}\big(d^{k-1}C_\phi(\frac{D(S)}{d^2})+(pd)^{k-1}C_\phi(\frac{D(S)}{(pd)^2})\big)\\
&+p^{-1}\Big(\sum_{\substack{a,z\in(\Z/p\Z)^\times\\z\not\equiv 1(p)\\2\beta p^{-1}\not\equiv \alpha p^{-4}a (p)}} \chi\big(az(1-z)(az\alpha p^{-4}-2\beta p^{-1})\big)\Big)\cdot\sum_{\substack{d\mid(\alpha,2\beta, \gamma)\\p\nmid d}}d^{k-1}C_\phi(\frac{D(S)}{d^2})\\
=&p^{-1}\Big(\sum_{\substack{a,z\in(\Z/p\Z)^\times\\z\not\equiv 1(p)\\2\beta p^{-1}\equiv \alpha p^{-4}a (p)}}\chi\big(az(1-z)(az\alpha p^{-4}-2\beta p^{-1})\big)\Big)\cdot\sum_{\substack{ d\mid(\alpha,2\beta, \gamma)\\p\nmid d}}(pd)^{k-1}C_\phi(\frac{D(S)}{(pd)^2})\\
&+p^{-1}\Big(\sum_{\substack{a,z\in(\Z/p\Z)^\times\\z\not\equiv 1(p)}} \chi\big(az(1-z)(az\alpha p^{-4}-2\beta p^{-1})\big)\Big)\cdot\sum_{\substack{d\mid(\alpha,2\beta, \gamma)\\p\nmid d}}d^{k-1}C_\phi(\frac{D(S)}{d^2})\\
=&-p^{-1}\chi(-\alpha p^{-4})\sum_{\substack{ d\mid(\alpha,2\beta, \gamma)\\p\nmid d}}(pd)^{k-1}C_\phi(\frac{D(S)}{(pd)^2})\\
&+p^{-1}\chi(\alpha p^{-4})\sum_{\substack{d\mid(\alpha,2\beta, \gamma)\\p\nmid d}}d^{k-1}C_\phi(\frac{D(S)}{d^2}).
\end{align*} 
Here we have used Lemma \ref{jlemma} to calculate the character sum. The third summand is easily computed. Since $p\nmid \alpha p^{-4}$, it is given by
$$
-p^{-1}\chi(\alpha p^{-4})\sum_{\substack{d\mid(\alpha,2\beta, \gamma)\\p\nmid d}}d^{k-1}C_\phi(\frac{D(S)}{d^2}).
$$
For the fourth summand, we note that the conditions defining $x$ imply that $p\nmid 2(\beta p^{-1}-x\alpha p^{-4})$.  Hence, the fourth summand is
$$
p^{k-2}\chi(-\alpha p^{-4})\sum_{\substack{d\mid(\alpha,2\beta, \gamma)\\p\nmid d}}d^{k-1}C_\phi(\frac{p^{-2}D(S)}{d^2}).
$$
For the final summand, we also have $p\nmid 2(\beta p^{-1}+y\alpha p^{-2})$, so that the summand is
$$
p^{k-2}\chi(-\gamma)\sum_{\substack{d\mid(\alpha,2\beta, \gamma)\\p\nmid d}}d^{k-1}C_\phi(\frac{p^{-2}D(S)}{d^2}).
$$
Evidently, the calculated summands cancel so that $a_\chi(S)=0$.\\

%Case (iii)
\noindent{\bf Case (iii):} We assume that $p\mid\mid 2\beta$ and $p^5\mid\alpha$ and let $y\in(\Z/p^2\Z)^\times$ be such that $2\beta p^{-1}y\equiv -\gamma(p^2).$  Then by Theorem \ref{maintheorem}
\begin{align*}
a_\chi(S) =&p^{-1}\sum_{\substack{a,b\in(\Z/p\Z)^\times}}\chi\big(ab( 2\beta p^{-1}a-\gamma)\big)a(S[\begin{bmatrix}1&-(a+b)p^{-1}\\&1\end{bmatrix}])\\
 &+p^{k-2}\chi(-\gamma)a(S[\begin{bmatrix}1&yp^{-2}\\&p^{-1}\end{bmatrix}])-p^{-1}\chi(2\beta p^{-1})\sum_{\substack{a\in(\Z/p\Z)^\times}}\chi(a)a(S[\begin{bmatrix}p^{-1}&-ap^{-2}\\&p\end{bmatrix}])\\
  =&p^{-1}\sum_{\substack{a,b\in(\Z/p\Z)^\times}}\chi\big(ab( 2\beta p^{-1}a-\gamma)\big)a(\begin{bmatrix}\alpha&\beta-(a+b)\alpha p^{-1}\\\beta-(a+b)\alpha p^{-1}&\gamma-2\beta(a+b)p^{-1}+(a+b)^2\alpha p^{-2}\end{bmatrix})\\
  &+p^{k-2}\chi(-\gamma)a(\begin{bmatrix}\alpha&\beta p^{-1}+y\alpha p^{-2}\\\beta p^{-1}+y\alpha p^{-2}&(\gamma+y2\beta p^{-1})p^{-2}+y^2\alpha p^{-4}\end{bmatrix})\\
  &-p^{-1}\chi(2\beta p^{-1})\sum_{\substack{a\in(\Z/p\Z)^\times}}\chi(a)a(\begin{bmatrix}\alpha p^{-2}&\beta-a\alpha p^{-3}\\\beta-a\alpha p^{-3}&p^2\gamma-2a\beta p^{-1}+a^2\alpha p^{-4}\end{bmatrix}).
 \end{align*}
 The first summand is calculated as in the previous case to be
 $$
 -p^{-1}\chi(-\gamma)\sum_{\substack{ d\mid(\alpha,2\beta, \gamma)\\p\nmid d}}(pd)^{k-1}C_\phi(\frac{D(S)}{(pd)^2}).
 $$
   The second summand is also calculated as in the previous case to be 
 $$
 p^{k-2}\chi(-\gamma)\sum_{\substack{d\mid(\alpha,2\beta, \gamma)\\p\nmid d}}d^{k-1}C_\phi(\frac{p^{-2}D(S)}{d^2}).
 $$
 The third summand is zero since $p\nmid p^2\gamma-2a\beta p^{-1}+a^2\alpha p^{-4}$, and so the summation on $a$ causes the term to vanish.  Hence we see that $a_\chi(S)=0$.\\
 
 %Case (iv)
 \noindent{\bf Case (iv):} In this case, we let $f_S(x)=\alpha p^{-4}x^2-2\beta p^{-2} x+\gamma$.
By assumption, $p^2\mid 2\beta$ and $p^4\mid\mid \alpha$.  As in the proof of case (iv) of Theorem \ref{maintheorem}, $a_\chi(S)=b_\chi(S)+c_\chi(S)+c_\chi(S)$.  Moreover, $W(\chi)b_\chi(S)=a_1(S)+a_2(S)+a_3(S)+a_5(S)+a_8(S)+a_{12}(S)+a_{13}(S)+a_{14}(S)$.  For convenience we will write $a'_l(S)$ for $W(\chi)^{-1}a_l(S)$.  We have
 \begin{align*}
a'_1(S) =&(1-p^{-1})\chi(\gamma)a(\begin{bmatrix}\alpha&\beta\\\beta&\gamma\end{bmatrix})-p^{-1}\chi( \gamma)\sum_{\substack{b\in(\Z/p\Z)^\times}}a(\begin{bmatrix}\alpha&\beta-b\alpha p^{-1}\\\beta-b\alpha p^{-1}&\gamma-2\beta bp^{-1}+b^2\alpha p^{-2}\end{bmatrix})\\\
a'_2(S)=&p^{k-3}\sum_{\substack{b,x,y\in(\Z/p\Z)^\times\\x,y\not\equiv1(p)}}\chi\big(y(1-x) (\alpha  p^{-4}(1-y)^{-1}(y-x)b^2 +2\beta  bp^{-2}-\gamma x^{-1})\big)\\
 &\qquad a(\begin{bmatrix}\alpha p^{-2}&\beta p^{-1}-b\alpha p^{-3}\\\beta p^{-1}-b\alpha p^{-3}&\gamma-2\beta p^{-2} b+\alpha p^{-4}b^2\end{bmatrix}),\\
a'_3(S)=&p^{k-3}\big(\chi(\gamma)+p\chi(-4\det(S)p^{-4}\gamma)-\chi( -\alpha p^{-4})W(\mathbf{1},2\beta p^{-2})\big)\\&\qquad a(\begin{bmatrix}\alpha p^{-2}&\beta p^{-1}\\\beta p^{-1}&\gamma\end{bmatrix}),\\
a'_5(S)=&p^{k-3}\chi(-\alpha p^{-4})\sum_{x\in\Z/p\Z}W(\mathbf{1},2\beta p^{-2}-x\alpha p^{-4})a(\begin{bmatrix}\alpha p^{-2}&\beta p^{-1}-x\alpha p^{-3}\\\beta p^{-1}-x\alpha p^{-3}&\gamma-2\beta p^{-2}x+\alpha p^{-4} x^2\end{bmatrix}),\\
a'_8(S)=&p^{2k-4}\!\!\!\sum\limits_{\substack{b\in(\Z/p^2\Z)^\times\\z\in(\Z/p\Z)^\times\\z\not\equiv1(p)\\f_S(b)\equiv0(p^2)}}\!\!\!\chi\big(z(1-z)( \gamma-z\alpha p^{-4}b^2)\big)a(\begin{bmatrix}\alpha p^{-2}&\beta p^{-2}-b\alpha p^{-4}\\\beta p^{-2}-b\alpha p^{-4}&p^{-2}(\gamma-2\beta p^{-2}b+\alpha p^{-4}b^2)\end{bmatrix}),\\
a'_{12}(S)=&-p^{-1}\chi(\alpha p^{-4})\sum_{\substack{a\in(\Z/p\Z)^\times}} a(\begin{bmatrix}\alpha p^{-2}&\beta-a\alpha p^{-3}\\\beta-a\alpha p^{-3}&p^2\gamma-2\beta p^{-1}a+\alpha p^{-4} a^2\end{bmatrix}),\\
a'_{13}(S)=&p^{k-3}\chi(\alpha p^{-4})(p\chi(-4\det(S)p^{-4})-W(\mathbf{1},\gamma))a(\begin{bmatrix}\alpha p^{-4}&\beta p^{-1}\\\beta p^{-1}&p^2\gamma\end{bmatrix}),\\
a'_{14}(S)=&(1-p^{-1})\chi(\alpha p^{-4})a(\begin{bmatrix}\alpha p^{-4}&\beta\\\beta&\gamma p^4\end{bmatrix}).
 \end{align*}
 We consider each summand of $b_\chi(S)$ separately.  First, $a'_1(S)$ vanishes.  This is clear if $p\mid \gamma$.    Assume that $p\nmid\gamma$.  Then $p\nmid \gamma-2\beta bp^{-1}+b^2\alpha p^{-2}$.  Hence, applying the formula \eqref{maassfourier}, we see that the two terms sum to zero.
 
 For $a'_2(S)$, we assume first that $p\mid \gamma$.  Then, $p\mid(\alpha',2\beta',\gamma')$ if and only if $2\beta p^{-2} \equiv \alpha p^{-4}b (p)$.  Notice that this also implies that in this case $p^2\mid\mid2\beta$.  Moreover, $p^2\nmid (\alpha',2\beta',\gamma')$.  Hence,
 \begin{align*}
a'_2(S)=&  p^{k-3}\Big(\sum_{\substack{b,x,y\in(\Z/p\Z)^\times\\x,y\not\equiv1(p)\\2\beta p^{-2}\not\equiv \alpha p^{-4}b(p)}}\chi\big((1-x)y( 2\beta  bp^{-2}+\alpha (1-y)^{-1}(y-x)b^2 p^{-4} )\big)\Big)\\
&\qquad\cdot\sum_{\substack{d\mid(\alpha, 2\beta,\gamma)\\p\nmid d}}d^{k-1}C_\phi(\frac{p^{-2}D(S)}{d^2})\\
  &+  p^{k-3}\Big(\sum_{\substack{b,x,y\in(\Z/p\Z)^\times\\x,y\not\equiv1(p)\\2\beta p^{-2}\equiv \alpha p^{-4}b(p)}}\chi\big((1-x)y(2\beta  bp^{-2}+\alpha (1-y)^{-1}(y-x)b^2 p^{-4} )\big)\Big)\\
  &\qquad\cdot\big(\sum_{\substack{d\mid(\alpha, 2\beta,\gamma)\\p\nmid d}}d^{k-1}C_\phi(\frac{p^{-2}D(S)}{d^2})+\sum_{\substack{d\mid(\alpha, 2\beta,\gamma)\\p\nmid d}}(pd)^{k-1}C_\phi(\frac{p^{-2}D(S)}{(pd)^2})\big)\\
  =&  p^{k-3}\Big(\sum_{\substack{b,x,y\in(\Z/p\Z)^\times\\x,y\not\equiv1(p)}}\chi\big((1-x)y(2\beta  bp^{-2}+\alpha (1-y)^{-1}(y-x)b^2 p^{-4} )\big)\Big)\\
  &\qquad\cdot\sum_{\substack{d\mid(\alpha, 2\beta,\gamma)\\p\nmid d}}d^{k-1}C_\phi(\frac{p^{-2}D(S)}{d^2})\\
  &+  p^{k-3}\Big(\sum_{\substack{b,x,y\in(\Z/p\Z)^\times\\x,y\not\equiv1(p)\\2\beta p^{-2}\equiv \alpha p^{-4}b(p)}}\chi\big((1-x)y( 2\beta  bp^{-2}+\alpha (1-y)^{-1}(y-x)b^2 p^{-4})\big)\Big)\\
  &\qquad\cdot\sum_{\substack{d\mid(\alpha, 2\beta,\gamma)\\p\nmid d}}(pd)^{k-1}C_\phi(\frac{p^{-2}D(S)}{(pd)^2})\\
  =&  p^{k-3}W(\mathbf{1},2\beta p^{-2})\Big(\sum_{\substack{x,y\in(\Z/p\Z)^\times\\x,y\not\equiv1(p)}}\chi\big((1-x)y(\alpha (1-y)^{-1}(y-x)p^{-4} )\big)\Big)\\
  &\qquad\cdot\sum_{\substack{d\mid(\alpha, 2\beta,\gamma)\\p\nmid d}}d^{k-1}C_\phi(\frac{p^{-2}D(S)}{d^2})\\
  &- p^{k-3}(p-2)\chi(-\alpha p^{-4})\sum_{\substack{b\in(\Z/p\Z)^\times\\2\beta p^{-2}\equiv \alpha p^{-4}b(p)}}\sum_{\substack{d\mid(\alpha, 2\beta,\gamma)\\p\nmid d}}(pd)^{k-1}C_\phi(\frac{p^{-2}D(S)}{(pd)^2})\\
   =&  p^{k-3} W(\mathbf{1},2\beta p^{-2})\chi(\alpha p^{-4} )\Big(\sum_{\substack{x,y\in(\Z/p\Z)^\times}}\chi\big((1-x)y(x+y)(1-(x+y))\big)\Big)\\&\qquad\cdot\sum_{\substack{d\mid(\alpha, 2\beta,\gamma)\\p\nmid d}}d^{k-1}C_\phi(\frac{p^{-2}D(S)}{d^2})\\
  &- p^{k-3}(p-2)\chi(-\alpha p^{-4})\Big(\sum_{\substack{b\in(\Z/p\Z)^\times\\2\beta p^{-2}\equiv\alpha p^{-4}b(p)}}1\Big)\sum_{\substack{d\mid(\alpha, 2\beta,\gamma)\\p\nmid d}}(pd)^{k-1}C_\phi(\frac{p^{-2}D(S)}{(pd)^2})\\
     =&  p^{-2} W(\mathbf{1},2\beta p^{-2})\Big(\chi(\alpha p^{-4} )+\chi(-\alpha p^{-4})\Big)\cdot\sum_{\substack{d\mid(\alpha, 2\beta,\gamma)\\p\nmid d}}(pd)^{k-1}C_\phi(\frac{D(S)}{(pd)^2})\\
  &- p^{-2}(p-2)\chi(-\alpha p^{-4})\Big(\sum_{\substack{b\in(\Z/p\Z)^\times\\2\beta p^{-2}\equiv\alpha p^{-4}b(p)}}1\Big)\sum_{\substack{d\mid(\alpha, 2\beta,\gamma)\\p\nmid d}}(p^2d)^{k-1}C_\phi(\frac{D(S)}{(p^2d)^2}).\\
 \end{align*}
Considering still $a'_2(S)$, we suppose that $p\nmid\gamma$.   Then, $p\mid (\alpha',2\beta',\gamma')$ if and only if $\gamma-2\beta p^{-2}b+\alpha p^{-4}b^2\equiv0(p)$.  Further, $p^2\mid(\alpha',2\beta',\gamma')$ if and only if $2\beta p^{-1}\equiv2b\alpha p^{-3}(p^2)$ and $\gamma-2\beta p^{-2}b+\alpha p^{-4}b^2\equiv0(p^2)$; together, these imply that $\gamma\equiv\alpha p^{-4}b^2(p^2)$ and that $p^6\mid\det(S)$.  We now consider cases depending upon the divisibility of $\det(S)$.  First assume that $p^6\nmid\det(S)$. Then we see that $p^2\nmid(\alpha',2\beta',\gamma')$ for all $b\in(\Z/p\Z)^\times$.  Hence, 
\begin{align*}
 a'_2(S)=&p^{k-3}\Big(\sum_{\substack{b,x,y\in(\Z/p\Z)^\times\\x,y\not\equiv1(p)\\f_S(b)\equiv0(p)}}\chi\big(y(1-x) (\alpha  p^{-4}(1-y)^{-1}(y-x)b^2 +2\beta  bp^{-2}-\gamma x^{-1})\big)\Big)\\
 &\cdot\big(\sum_{\substack{d\mid(\alpha, 2\beta,\gamma)\\p\nmid d}}d^{k-1}C_\phi(\frac{p^{-2}D(S)}{d^2})+\sum_{\substack{d\mid(\alpha, 2\beta,\gamma)\\p\nmid d}}(pd)^{k-1}C_\phi(\frac{p^{-2}D(S)}{(pd)^2})\big)\\
 &+ p^{k-3}\Big(\sum_{\substack{b,x,y\in(\Z/p\Z)^\times\\x,y\not\equiv1(p)\\f_S(b)\not\equiv0(p)}}\chi\big(y(1-x) (\alpha  p^{-4}(1-y)^{-1}(y-x)b^2 +2\beta  bp^{-2}-\gamma x^{-1})\big)\Big)\\
 &\qquad\cdot\sum_{\substack{d\mid(\alpha, 2\beta,\gamma)\\p\nmid d}}d^{k-1}C_\phi(\frac{p^{-2}D(S)}{d^2})\\
 =&p^{k-3}\Big(\sum_{\substack{b,x,y\in(\Z/p\Z)^\times\\x,y\not\equiv1(p)\\f_S(b)\equiv0(p)}}\chi\big(y(1-x) (\alpha  p^{-4}(1-y)^{-1}(y-x)b^2 +2\beta  bp^{-2}-\gamma x^{-1})\big)\Big)\\
 &\cdot\sum_{\substack{d\mid(\alpha, 2\beta,\gamma)\\p\nmid d}}(pd)^{k-1}C_\phi(\frac{p^{-2}D(S)}{(pd)^2})\\
 &+ p^{k-3}\Big(\sum_{\substack{b,x,y\in(\Z/p\Z)^\times\\x,y\not\equiv1(p)}}\chi\big(y(1-x) (\alpha  p^{-4}(1-y)^{-1}(y-x)b^2 +2\beta  bp^{-2}-\gamma x^{-1})\big)\Big)\\
 &\qquad\cdot\sum_{\substack{d\mid(\alpha, 2\beta,\gamma)\\p\nmid d}}d^{k-1}C_\phi(\frac{p^{-2}D(S)}{d^2})\\
  =&p^{k-3}\Big(\sum_{\substack{b,x,y\in(\Z/p\Z)^\times\\x,y\not\equiv1(p)\\f_S(b)\equiv0(p)}}\chi\big(y( \alpha p^{-4}(1-y)^{-1}b^2  -\gamma x^{-1})\big)\Big)\cdot\sum_{\substack{d\mid(\alpha, 2\beta,\gamma)\\p\nmid d}}(pd)^{k-1}C_\phi(\frac{p^{-2}D(S)}{(pd)^2})\\
 &+ p^{k-3}\Big(\sum_{\substack{b,x,y\in(\Z/p\Z)^\times\\x,y\not\equiv1(p)}}\chi\big(y(1-x) (\alpha  p^{-4}(1-y)^{-1}(y-x)b^2 +2\beta  bp^{-2}-\gamma x^{-1})\big)\Big)\\
 &\qquad\cdot\sum_{\substack{d\mid(\alpha, 2\beta,\gamma)\\p\nmid d}}d^{k-1}C_\phi(\frac{p^{-2}D(S)}{d^2}).
\end{align*}
 Still considering $a'_2(S)$, now with the assumptions that $p\nmid\gamma$ and $p^6\mid\det(S)$, we have
 \begin{align*}
 a'_2(S)=&p^{k-3}\Big(\sum_{\substack{b,x,y\in(\Z/p\Z)^\times\\x,y\not\equiv1(p)\\2\beta p^{-2}\equiv2\alpha p^{-4} b(p)}}\chi\big(y(1-x) (\alpha  p^{-4}(1-y)^{-1}(y-x)b^2 +2\beta  bp^{-2}-\gamma x^{-1})\big)\Big)\\
 &\cdot\big(\sum_{\substack{d\mid(\alpha, 2\beta,\gamma)\\p\nmid d}}d^{k-1}C_\phi(\frac{p^{-2}D(S)}{d^2})
 +\sum_{\substack{d\mid(\alpha, 2\beta,\gamma)\\p\nmid d}}(pd)^{k-1}C_\phi(\frac{p^{-2}D(S)}{(pd)^2})\\
 &\qquad+\sum_{\substack{d\mid(\alpha, 2\beta,\gamma)\\(p\nmid d}}(p^2d)^{k-1}C_\phi(\frac{p^{-2}D(S)}{(p^2d)^2})\big)\\
 &+p^{k-3}\Big(\sum_{\substack{b,x,y\in(\Z/p\Z)^\times\\x,y\not\equiv1(p)\\2\beta p^{-2}\not\equiv2\alpha p^{-4} b(p)\\f_S(b)\equiv0(p)}}\chi\big(y(1-x) (\alpha  p^{-4}(1-y)^{-1}(y-x)b^2 +2\beta  bp^{-2}-\gamma x^{-1})\big)\Big)\\
 &\qquad\cdot\big(\sum_{\substack{d\mid(\alpha, 2\beta,\gamma)\\p\nmid d}}d^{k-1}C_\phi(\frac{p^{-2}D(S)}{d^2})+\sum_{\substack{d\mid(\alpha, 2\beta,\gamma)\\p\nmid d}}(pd)^{k-1}C_\phi(\frac{p^{-2}D(S)}{(pd)^2})\big)\\
  &+p^{k-3}\Big(\sum_{\substack{b,x,y\in(\Z/p\Z)^\times\\x,y\not\equiv1(p)\\2\beta p^{-2}\not\equiv2\alpha p^{-4} b(p)\\f_S(b)\not\equiv0(p)}}\chi\big(y(1-x) (\alpha  p^{-4}(1-y)^{-1}(y-x)b^2 +2\beta  bp^{-2}-\gamma x^{-1})\big)\Big)\\
 &\qquad\cdot\sum_{\substack{d\mid(\alpha, 2\beta,\gamma)\\p\nmid d}}d^{k-1}C_\phi(\frac{p^{-2}D(S)}{d^2})\\
 = &p^{k-3}\Big(\sum_{\substack{b,x,y\in(\Z/p\Z)^\times\\x,y\not\equiv1(p)\\2\beta p^{-2}\equiv2\alpha p^{-4} b(p)}}\chi\big(y(1-x) (\alpha  p^{-4}(1-y)^{-1}(y-x)b^2 +2\beta  bp^{-2}-\gamma x^{-1})\big)\Big)\\
 &\qquad\cdot\sum_{\substack{d\mid(\alpha, 2\beta,\gamma)\\(p\nmid d}}(p^2d)^{k-1}C_\phi(\frac{p^{-2}D(S)}{(p^2d)^2})\\
 &+p^{k-3}\Big(\sum_{\substack{b,x,y\in(\Z/p\Z)^\times\\x,y\not\equiv1(p)\\f_S(b)\equiv0(p)}}\chi\big(y(1-x) (\alpha  p^{-4}(1-y)^{-1}(y-x)b^2 +2\beta  bp^{-2}-\gamma x^{-1})\big)\Big)\\
 &\qquad\cdot\sum_{\substack{d\mid(\alpha, 2\beta,\gamma)\\p\nmid d}}(pd)^{k-1}C_\phi(\frac{p^{-2}D(S)}{(pd)^2})\\
  &+p^{k-3}\Big(\sum_{\substack{b,x,y\in(\Z/p\Z)^\times\\x,y\not\equiv1(p)}}\chi\big(y(1-x) (\alpha  p^{-4}(1-y)^{-1}(y-x)b^2 +2\beta  bp^{-2}-\gamma x^{-1})\big)\Big)\\
 &\qquad\cdot\sum_{\substack{d\mid(\alpha, 2\beta,\gamma)\\p\nmid d}}d^{k-1}C_\phi(\frac{p^{-2}D(S)}{d^2}).
 \end{align*}
 We now consider $a'_3(S)$.  We see that $p\mid(\alpha',2\beta',\gamma')$ if and only if $p\mid\gamma$.  Further $p^2\mid(\alpha',2\beta',\gamma')$ if and only if $p^3\mid2\beta$ and $p^2\mid\gamma$; in this case, we also have that $p^6\mid4\det(S)$. Note that $p^3\nmid\alpha'$.  Hence, if $p\nmid\gamma$, 
  \begin{align*}
 a'_3(S)=&p^{k-3}\big(\chi(\gamma)+p\chi(D(S)p^{-4}\gamma)-\chi(-\alpha p^{-4})W(\mathbf{1},2\beta p^{-2})\big)\sum_{\substack{d\mid(\alpha, 2\beta,\gamma)\\p\nmid d}}d^{k-1}C_\phi(\frac{p^{-2}D(S)}{d^2}).
 \end{align*}
 If $p\mid\gamma$ but $p^2\nmid\gamma$ or $p^3\nmid2\beta$,
 \begin{align*}
a'_3(S)=&-p^{k-3}\chi( -\alpha p^{-4})W(\mathbf{1},2\beta p^{-2})\big( \!\!\sum_{\substack{d\mid(\alpha, 2\beta,\gamma)\\p\nmid d}}d^{k-1}C_\phi(\frac{p^{-2}D(S)}{d^2})+\!\!\!\sum_{\substack{d\mid(\alpha, 2\beta,\gamma)\\p\nmid d}}(pd)^{k-1}C_\phi(\frac{p^{-2}D(S)}{(pd)^2})\big).
 \end{align*}
  If $p^2\mid\gamma$ and $p^3\mid2\beta$, then 
   \begin{align*}
 a'_3(S)= &-p^{k-3}\chi( -\alpha p^{-4})(p-1)\big( \sum_{\substack{d\mid(\alpha, 2\beta,\gamma)\\p\nmid d}}d^{k-1}C_\phi(\frac{p^{-2}D(S)}{d^2})\\
 & \qquad+\sum_{\substack{d\mid(\alpha, 2\beta,\gamma)\\p\nmid d}}(pd)^{k-1}C_\phi(\frac{p^{-2}D(S)}{(pd)^2})+\sum_{\substack{d\mid(\alpha, 2\beta,\gamma)\\p\nmid d}}(p^2d)^{k-1}C_\phi(\frac{p^{-2}D(S)}{(p^2d)^2})\big).
 \end{align*}
 We consider $a'_5(S)$. We have that $p\mid(\alpha',2\beta',\gamma')$ if and only if $p\mid(\gamma-2\beta p^{-2}x+\alpha p^{-4}x^2)$.  Further, $p^2\mid(\alpha',2\beta',\gamma')$ if and only if $2\beta p^{-2}\equiv 2x\alpha p^{-4} (p)$ and $p^6\mid D(S)$.  Finally, $p^3\nmid \alpha'$.  Hence, in the case that $p^6\nmid D(S)$, we have 
 \begin{align*}
 a'_5(S)=&p^{k-3}\chi(-\alpha p^{-4})\Big(\sum_{\substack{x\in\Z/p\Z\\f_S(x)\equiv0(p)}}W(\mathbf{1},2\beta p^{-2}-x\alpha p^{-4})\Big)\sum_{\substack{d\mid(\alpha, 2\beta,\gamma)\\p\nmid d}}\big(d^{k-1}C_\phi(\frac{p^{-2}D(S)}{d^2})\\
 &\qquad+(pd)^{k-1}C_\phi(\frac{p^{-2}D(S)}{(pd)^2})\big)\\
 &+ p^{k-3}\chi(-\alpha p^{-4})\Big(\sum_{\substack{x\in\Z/p\Z\\f_S(x)\not\equiv0(p)}}W(\mathbf{1},2\beta p^{-2}-x\alpha p^{-4})\Big)\sum_{\substack{d\mid(\alpha, 2\beta,\gamma)\\p\nmid d}}d^{k-1}C_\phi(\frac{p^{-2}D(S)}{d^2})\\
 =&p^{k-3}\chi(-\alpha p^{-4})\Big(\sum_{\substack{x\in\Z/p\Z\\f_S(x)\equiv0(p)}}W(\mathbf{1},2\beta p^{-2}-x\alpha p^{-4})\Big)\sum_{\substack{d\mid(\alpha, 2\beta,\gamma)\\p\nmid d}}(pd)^{k-1}C_\phi(\frac{p^{-2}D(S)}{(pd)^2})\\
 &+ p^{k-3}\chi(-\alpha p^{-4})\Big(\sum_{\substack{x\in\Z/p\Z}}W(\mathbf{1},2\beta p^{-2}-x\alpha p^{-4})\Big)\sum_{\substack{d\mid(\alpha, 2\beta,\gamma)\\p\nmid d}}d^{k-1}C_\phi(\frac{p^{-2}D(S)}{d^2})\\
 =&p^{k-3}\chi(-\alpha p^{-4})\Big(\sum_{\substack{x\in\Z/p\Z\\f_S(x)\equiv0(p)}}W(\mathbf{1},2\beta p^{-2}-x\alpha p^{-4})\Big)\sum_{\substack{d\mid(\alpha, 2\beta,\gamma)\\p\nmid d}}(pd)^{k-1}C_\phi(\frac{p^{-2}D(S)}{(pd)^2}).
 \end{align*}
 In the case that $p^6\mid D(S)$, we have 
 \begin{align*}
 a'_5(S)=&p^{k-3}\chi(-\alpha p^{-4})\Big(\sum_{\substack{x\in\Z/p\Z\\2\beta p^{-2}\equiv2x\alpha p^{-4}}}W(\mathbf{1},2\beta p^{-2}-x\alpha p^{-4})\Big)\sum_{\substack{d\mid(\alpha, 2\beta,\gamma)\\p\nmid d}}\big(d^{k-1}C_\phi(\frac{p^{-2}D(S)}{d^2})\\
 &\qquad+(pd)^{k-1}C_\phi(\frac{p^{-2}D(S)}{(pd)^2})+(p^2d)^{k-1}C_\phi(\frac{p^{-2}D(S)}{(p^2d)^2})\big)\\
 &+p^{k-3}\chi(-\alpha p^{-4})\Big(\sum_{\substack{x\in\Z/p\Z\\2\beta p^{-2}\not\equiv2x\alpha p^{-4}}}W(\mathbf{1},2\beta p^{-2}-x\alpha p^{-4})\Big)\sum_{\substack{d\mid(\alpha, 2\beta,\gamma)\\p\nmid d}}d^{k-1}C_\phi(\frac{p^{-2}D(S)}{d^2})\\
 =&p^{k-3}\chi(-\alpha p^{-4})\Big(\sum_{\substack{x\in\Z/p\Z\\2\beta p^{-2}\equiv2x\alpha p^{-4}}}W(\mathbf{1},x\alpha p^{-4})\Big)\sum_{\substack{d\mid(\alpha, 2\beta,\gamma)\\p\nmid d}}\big((pd)^{k-1}C_\phi(\frac{p^{-2}D(S)}{(pd)^2})\\
& \qquad+(p^2d)^{k-1}C_\phi(\frac{p^{-2}D(S)}{(p^2d)^2})\big)\\
 &+p^{k-3}\chi(-\alpha p^{-4})\Big(\sum_{\substack{x\in\Z/p\Z}}W(\mathbf{1},2\beta p^{-2}-x\alpha p^{-4})\Big)\sum_{\substack{d\mid(\alpha, 2\beta,\gamma)\\p\nmid d}}d^{k-1}C_\phi(\frac{p^{-2}D(S)}{d^2})\\
  =&p^{k-3}\chi(-\alpha p^{-4})\Big(\sum_{\substack{x\in\Z/p\Z\\2\beta p^{-2}\equiv2x\alpha p^{-4}}}W(\mathbf{1},x\alpha p^{-4})\Big)\sum_{\substack{d\mid(\alpha, 2\beta,\gamma)\\p\nmid d}}\big((pd)^{k-1}C_\phi(\frac{p^{-2}D(S)}{(pd)^2})\\
& \qquad+(p^2d)^{k-1}C_\phi(\frac{p^{-2}D(S)}{(p^2d)^2})\big).
 \end{align*}
 \noindent
 We now consider $a'_8(S)$.  We first notice that the term vanishes unless there are $b\in (\Z/p^2\Z)^\times$, such that $f_S(b)$ vanishes modulo $p^2$.  By Lemma \ref{quadeqlemma}, we see that such $b$ exist if and only if  
 1) $p^4\mid\mid D(S)$ and $D(S)p^{-4}$ is a square modulo $p^2$ or 2)  $p^6\mid D(S)$.  Thus, if $p^5\mid\mid D(S)$, $a'_8(S)$ vanishes.  
 
 Assume 1) holds, and let $s\in(\Z/p^2\Z)^\times$ be a square root of $D(S)p^{-4}$.  We see by Lemma \ref{ccondlemma} that $b=b_{\pm}$ where $b_\pm:= (2\beta p^{-2}\pm s)(2\alpha p^{-4})^{-1}$  and that $p\nmid(\alpha',2\beta',\gamma')$. We have 
\begin{align*} 
 a'_8(S)=&p^{2k-4}\Big(\sum\limits_{\substack{b\in(\Z/p^2\Z)^\times\\z\in(\Z/p\Z)^\times\\b=b_\pm\\z\not\equiv 1(p)}}\!\!\!\chi\big(z(1-z)(\gamma  -z\alpha p^{-4}b^2\big)\Big)\sum_{\substack{d\mid(\alpha, 2\beta,\gamma)\\p\nmid d}}d^{k-1}C_\phi(\frac{p^{-4}D(S)}{d^2}).
\end{align*}
Now assume that $p^6\mid\mid D(S)$.  Then, $a_8(S)=0$ unless $p^2\mid\mid2\beta$.  Assume this.  Then by Lemmas \ref{quadeqlemma} and \ref{ccondlemma}, $\{b=2\beta p^{-2}(2\alpha p^{-4})^{-1}+py\,:\,y\in\Z/p\Z\}\subset(\Z/p^2\Z)^\times$ are solutions of the polynomial $f_S(x)$ modulo $p^2$.   Further, we have that $p\mid(\alpha',2\beta',\gamma')$ if and only if $D(S)p^{-6}$ is a square modulo $p$ and $(2\alpha p^{-4}y)^2\equiv D(S)p^{-6}(p)$. Moreover, $p^2\nmid(\alpha',2\beta',\gamma')$.  Hence,
\begin{align*} 
 a'_8(S)=&p^{2k-4}\Big(\sum\limits_{\substack{y\in\Z/p\Z\\z\in(\Z/p\Z)^\times\\z\not\equiv 1(p)}}\!\!\!\chi\big(z(1-z)(\gamma  -z\alpha p^{-4}(2\beta p^{-2}(2\alpha p^{-4})^{-1}+py)^2)\big)\Big)\\
 &\cdot\sum_{\substack{d\mid(\alpha, 2\beta,\gamma)\\p\nmid d}}d^{k-1}C_\phi(\frac{p^{-4}D(S)}{d^2})\\
 +&\frac{(1+\chi(D(S)p^{-6}))}{2} p^{2k-4}\Big(\!\!\!\!\!\sum\limits_{\substack{y\in\Z/p\Z\\z\in(\Z/p\Z)^\times\\(2\alpha p^{-4}y)^2\equiv D(S)p^{-6}(p)\\z\not\equiv1(p)}} \!\!\!\!\!\chi\big(z(1-z)(\gamma-z\alpha p^{-4}(2\beta p^{-2}(2\alpha p^{-4})^{-1}+py)^2)\big)\Big)\\
 &\cdot\sum_{\substack{d\mid(\alpha, 2\beta,\gamma)\\p\nmid d}}(pd)^{k-1}C_\phi(\frac{p^{-4}D(S)}{(pd)^2})\big)\\
  =&p^{2k-3}\Big(\!\!\!\sum\limits_{\substack{z\in(\Z/p\Z)^\times\\z\not\equiv 1(p)}}\!\!\!\chi\big(z(1-z)(\gamma -4^{-1}(\alpha p^{-4})^{-1}(2\beta p^{-2})^2z)\big)\Big)\sum_{\substack{d\mid(\alpha, 2\beta,\gamma)\\p\nmid d}}d^{k-1}C_\phi(\frac{p^{-4}D(S)}{d^2})\\
 &+(1+\chi(D(S)p^{-6})) p^{2k-4}\Big(\sum\limits_{\substack{z\in(\Z/p\Z)^\times\\z\not\equiv1(p)}}\!\!\!
 \chi\big(z(1-z)(\gamma-4^{-1}(\alpha p^{-4})^{-1}(2\beta p^{-2})^2z)\big)\Big)\\
 &\cdot\sum_{\substack{d\mid(\alpha, 2\beta,\gamma)\\p\nmid d}}(pd)^{k-1}C_\phi(\frac{p^{-4}D(S)}{(pd)^2})\\
  =&p^{2k-3}\Big(\!\!\!\sum\limits_{\substack{z\in(\Z/p\Z)^\times\\z\not\equiv 1(p)}}\!\!\!\chi(z \alpha p^{-4})\Big)\sum_{\substack{d\mid(\alpha, 2\beta,\gamma)\\p\nmid d}}d^{k-1}C_\phi(\frac{p^{-4}D(S)}{d^2})\\
 &+(1+\chi(D(S)p^{-6})) p^{2k-4}\Big(\sum\limits_{\substack{z\in(\Z/p\Z)^\times\\z\not\equiv1(p)}}\!\!\!
 \chi(z \alpha p^{-4})\Big)\sum_{\substack{d\mid(\alpha, 2\beta,\gamma)\\p\nmid d}}(pd)^{k-1}C_\phi(\frac{p^{-4}D(S)}{(pd)^2})\\
   =&-p^{2k-3}\chi(\alpha p^{-4})\sum_{\substack{d\mid(\alpha, 2\beta,\gamma)\\p\nmid d}}d^{k-1}C_\phi(\frac{p^{-4}D(S)}{d^2})\\
 &-p^{2k-4}(1+\chi(D(S)p^{-6})) \chi (\alpha p^{-4})\sum_{\substack{d\mid(\alpha, 2\beta,\gamma)\\p\nmid d}}(pd)^{k-1}C_\phi(\frac{p^{-4}D(S)}{(pd)^2}).
 \end{align*}
 We now assume that $p^7\mid\mid D(S)$.  Then, $a'_8(S)=0$ unless $p^2\mid\mid2\beta$.  Assume this. Then by Lemmas \ref{quadeqlemma} and \ref{ccondlemma} $\{b=2\beta p^{-2}(2\alpha p^{-4})^{-1}+py\,:\,y\in\Z/p\Z\}\subset(\Z/p^2\Z)^\times$ are solutions of $f_S(x)$ modulo $p^2$.  Further, we have that $p\mid(\alpha',2\beta',\gamma')$ if and only if $b\equiv 2\beta p^{-2}(2\alpha p^{-4})^{-1}(p^2)$.  Moreover, $p^2\nmid(\alpha',2\beta',\gamma')$.  Hence,
 \begin{align*}
 a'_8(S)=&p^{2k-4}\Big(\!\!\!\!\!\sum\limits_{\substack{y\in\Z/p\Z\\z\in(\Z/p\Z)^\times\\z\not\equiv 1(p)}}\!\!\!\!\!\chi\big(z(1-z)(\gamma -z\alpha p^{-4}(2\beta p^{-2}(2\alpha p^{-4})^{-1}+py)^2)\big)\Big)\sum_{\substack{d\mid(\alpha, 2\beta,\gamma)\\p\nmid d}}d^{k-1}C_\phi(\frac{p^{-4}D(S)}{d^2})\\
 &+ p^{2k-4}\Big(\!\!\!\!\!\sum\limits_{\substack{\\z\in(\Z/p\Z)^\times\\z\not\equiv1(p)}}\!\!\!\!\!
 \chi\big(z(1-z)(\gamma -z\alpha p^{-4}(2\beta p^{-2}(2\alpha p^{-4})^{-1})^2)\big)\Big)\sum_{\substack{d\mid(\alpha, 2\beta,\gamma)\\p\nmid d}}(pd)^{k-1}C_\phi(\frac{p^{-4}D(S)}{(pd)^2})\\
 =&-p^{2k-3}\chi( \alpha p^{-4})\sum_{\substack{d\mid(\alpha, 2\beta,\gamma)\\p\nmid d}}d^{k-1}C_\phi(\frac{p^{-4}D(S)}{d^2})- p^{2k-4}\chi(\alpha p^{-4})\sum_{\substack{d\mid(\alpha, 2\beta,\gamma)\\p\nmid d}}(pd)^{k-1}C_\phi(\frac{p^{-4}D(S)}{(pd)^2}).
\end{align*}
Finally, assume that $p^8\mid D(S)$.  Then, $a'_8(S)=0$ unless $p^2\mid\mid2\beta$.  Assume this. Then by Lemmas  \ref{quadeqlemma} and \ref{ccondlemma} $\{b=2\beta p^{-2}(2\alpha p^{-4})^{-1}+py\,:\,y\in\Z/p\Z\}\subset(\Z/p\Z)^\times$ are solutions of the polynomial $f_S(x)$ modulo $p^2$.  Further, we have that $p\mid(\alpha',2\beta',\gamma')$ if and only if $b\equiv 2\beta p^{-2}(2\alpha p^{-4})^{-1}(p^2)$ if and only if $p^2\mid(\alpha',2\beta',\gamma')$.   Moreover, $p^3\nmid(\alpha',2\beta',\gamma')$.  Hence,
\begin{align*}
a'_8(S)=&p^{2k-4}\Big(\!\!\!\sum\limits_{\substack{y\in\Z/p\Z\\z\in(\Z/p\Z)^\times\\z\not\equiv 1(p)}}\!\!\!\chi\big(z(1-z)(\gamma -z\alpha p^{-4}(2\beta p^{-2}(2\alpha p^{-4})^{-1}+py)^2)\big)\Big)\sum_{\substack{d\mid(\alpha, 2\beta,\gamma)\\p\nmid d}}d^{k-1}C_\phi(\frac{p^{-4}D(S)}{d^2})\\
 &+ p^{2k-4}\Big(\sum\limits_{\substack{\\z\in(\Z/p\Z)^\times\\z\not\equiv1(p)}}\!\!\!
 \chi\big(z(1-z) (\gamma-z\alpha p^{-4}(2\beta p^{-2}(2\alpha p^{-4})^{-1})^2)\big)\Big)\\
 &\cdot\sum_{\substack{d\mid(\alpha, 2\beta,\gamma)\\p\nmid d}}\big((pd)^{k-1}C_\phi(\frac{p^{-4}D(S)}{(pd)^2})+(p^2d)^{k-1}C_\phi(\frac{p^{-4}D(S)}{(p^2d)^2})\big)\\
 =&-p^{2k-3}\chi(\alpha p^{-4})\sum_{\substack{d\mid(\alpha, 2\beta,\gamma)\\p\nmid d}}d^{k-1}C_\phi(\frac{p^{-4}D(S)}{d^2})\\
 &-p^{2k-4}\chi(\alpha p^{-4})\sum_{\substack{d\mid(\alpha, 2\beta,\gamma)\\p\nmid d}}\big((pd)^{k-1}C_\phi(\frac{p^{-4}D(S)}{(pd)^2})+(p^2d)^{k-1}C_\phi(\frac{p^{-4}D(S)}{(p^2d)^2})\big).
 \end{align*}
  For $a'_{12}(S)$, we see that $p\nmid \gamma'$ and hence
  \begin{align*}
  a'_{12}(S)=&-(1-p^{-1})\chi(\alpha p^{-4})\sum_{\substack{d\mid(\alpha, 2\beta,\gamma)\\p\nmid d}}d^{k-1}C_\phi(\frac{D(S)}{d^2}).
  \end{align*}
  For $a'_{13}(S)$, we see that $p\nmid\alpha'$ and hence
  \begin{align*}
 a'_{13}(S)= &p^{k-3}\chi(\alpha p^{-4})\big(p\chi(D(S)p^{-4})-W(\mathbf{1},\gamma)\big)\sum_{\substack{d\mid(\alpha, 2\beta,\gamma)\\p\nmid d}} d^{k-1}C_\phi(\frac{p^{-2}D(S)}{d^2}).
  \end{align*}
Finally, for $a'_{14}(S)$, we see that $p\nmid\alpha'$ 
\begin{align*}
a'_{14}(S)=&(1-p^{-1})\chi(\alpha p^{-4})\sum_{\substack{d\mid(\alpha, 2\beta,\gamma)\\p\nmid d}}d^{k-1}C_\phi(\frac{D(S)}{d^2}).
\end{align*}
%a_chi''
Next, $c_\chi(S)$ vanishes unless $p^5\mid D(S)$ and $\chi(\alpha p^{-4})=\chi(\gamma)$.  Assume these conditions.  Then, 
\begin{align*}
c_\chi(S)=&a'_6(S)=p^{2k-4}\chi(\alpha p^{-4})W(\mathbf{1},D(S)p^{-5})a([\begin{bmatrix}\alpha p^{-4}&2\beta p^{-2}\\2\beta p^{-2}&\gamma\end{bmatrix}])\\
=&p^{2k-4}\chi(\alpha p^{-4})W(\mathbf{1},D(S)p^{-5})\sum_{\substack{d\mid(\alpha,2\beta,\gamma)\\p\nmid d}}d^{k-1}C_\phi(\frac{p^{-4}D(S)}{d^2}).
\end{align*}
%a_\chi'''
Now, $d_\chi(S)=a'_9(S)+a'_{10}(S)$.  Consider first $a'_9(S)$, which vanishes unless $p^6\mid D(S)$ and there exists an $a\in (\Z/p\Z)^\times$ satisfying $2\alpha p^{-4}a\equiv 2\beta p^{-2}$.  Assume this.  Then, 
\begin{align*}
a'_9(S)=&p^{3k-5}\chi(\alpha p^{-4})\chi(D(S)p^{-6})a(S[\begin{bmatrix}p^{-2}&-ap^{-3}\\&p^{-1}\end{bmatrix}])\\
=&p^{3k-5}\chi(\alpha p^{-4})\chi(D(S)p^{-6})a(S[\begin{bmatrix}\alpha p^{-4}&(\beta p^{-2}-a\alpha p^{-4})p^{-1}\\(\beta p^{-2}-a\alpha p^{-4})p^{-1}&p^{-2}(\gamma-2a\beta p^{-2}+a^2\alpha p^{-4})\end{bmatrix}])\\
=&p^{3k-5}\chi(\alpha p^{-4})\chi(D(S)p^{-6})\sum_{\substack{d\mid(\alpha,2\beta,\gamma)\\p\nmid d}}d^{k-1}C_\phi(\frac{p^{-6}D(S)}{d^2}).
\end{align*}
Consider $a'_{10}(S)$, which vanishes unless $p^8\mid D(S)$ and $p^2\mid\mid 2\beta$.  Assume this.  Then
\begin{align*}
a'_{10}(S)=&p^{4k-6}\chi(\alpha p^{-4})\sum_{\substack{d\mid(\alpha,2\beta,\gamma)\\p\nmid d}}d^{k-1}C_\phi(\frac{p^{-8}D(S)}{d^2}).
\end{align*}

%Term 4 full sum
We now show that $a_\chi(S)$ vanishes.  First suppose that $p^4\mid\mid D(S)$.  Then $c_\chi(S)=d_\chi(S)=0$.  Notice that the terms $a'_{14}(S)$ and $a'_{12}(S)$ of $b_\chi(S)$ cancel one another, so we consider the five remaining terms.  

Assume first that $p\mid\gamma$, and hence that $p^2\mid\mid 2\beta$.  Then, 
\begin{align*}
a_\chi(S)&=b_\chi(S)=a'_2(S)+a'_3(S)+a'_5(S)+a'_8(S)+a'_{13}(S)\\
&=   -p^{-2} \Big(\chi(\alpha p^{-4} )+\chi(-\alpha p^{-4})\Big)\cdot\sum_{\substack{d\mid(\alpha, 2\beta,\gamma)\\p\nmid d}}(pd)^{k-1}C_\phi(\frac{D(S)}{(pd)^2})\\
  &- p^{-2}(p-2)\chi(-\alpha p^{-4})\sum_{\substack{d\mid(\alpha, 2\beta,\gamma)\\p\nmid d}}(p^2d)^{k-1}C_\phi(\frac{D(S)}{(p^2d)^2})\\
   &+p^{-2}\chi( -\alpha p^{-4})\big( \!\!\sum_{\substack{d\mid(\alpha, 2\beta,\gamma)\\p\nmid d}}(pd)^{k-1}C_\phi(\frac{D(S)}{(pd)^2})+\!\!\!\sum_{\substack{d\mid(\alpha, 2\beta,\gamma)\\p\nmid d}}(p^2d)^{k-1}C_\phi(\frac{D(S)}{(p^2d)^2})\big)\\
      &+p^{-2}\chi(-\alpha p^{-4})\Big(\sum_{\substack{x\in\Z/p\Z\\f_S(x)\equiv0(p)}}W(\mathbf{1},2\beta p^{-2}-x\alpha p^{-4})\Big)\sum_{\substack{d\mid(\alpha, 2\beta,\gamma)\\p\nmid d}}(p^2d)^{k-1}C_\phi(\frac{D(S)}{(p^2d)^2})\\
&+p^{-2}\Big(\sum\limits_{\substack{b\in(\Z/p^2\Z)^\times\\z\in(\Z/p\Z)^\times\\b=b_\pm\\z\not\equiv 1(p)}}\!\!\!\chi\big(z(1-z)(\gamma  -z\alpha p^{-4}b^2\big)\Big)\sum_{\substack{d\mid(\alpha, 2\beta,\gamma)\\p\nmid d}}(p^2d)^{k-1}C_\phi(\frac{D(S)}{(p^2d)^2})\\
 &+p^{-2}\chi(\alpha p^{-4})\sum_{\substack{d\mid(\alpha, 2\beta,\gamma)\\p\nmid d}} (pd)^{k-1}C_\phi(\frac{D(S)}{(pd)^2})\\
  &=p^{-2}\Big(-(p-3)\chi(-\alpha p^{-4})+\chi(-\alpha p^{-4})\big(\sum_{\substack{x\in\Z/p\Z\\f_S(x)\equiv0(p)}}W(\mathbf{1},2\beta p^{-2}-x\alpha p^{-4})\big)\\
  &+\sum\limits_{\substack{b\in(\Z/p^2\Z)^\times\\z\in(\Z/p\Z)^\times\\b=b_\pm\\z\not\equiv 1(p)}}\!\!\!\chi\big(z(1-z)(\gamma  -z\alpha p^{-4}b^2\big)\Big)\cdot\sum_{\substack{d\mid(\alpha, 2\beta,\gamma)\\p\nmid d}}(p^2d)^{k-1}C_\phi(\frac{D(S)}{(p^2d)^2})\\
  &=p^{-2}\chi(-\alpha p^{-4})\Big(-(p-3)+(p-2)+\sum\limits_{\substack{z\in(\Z/p\Z)^\times\\z\not\equiv 1(p)}}\!\!\!\chi(1-z)\Big)\cdot\sum_{\substack{d\mid(\alpha, 2\beta,\gamma)\\p\nmid d}}(p^2d)^{k-1}C_\phi(\frac{D(S)}{(p^2d)^2})\\
  &=0.
\end{align*}
We now consider the case that $p\nmid\gamma$.  We will further assume that $p^{-4}D(S)$ is a square modulo $p$, and hence by Hensel's lemma is a square in $\Z_p$.  Let $s\in \Z_p$ be a square root of $p^{-4}D(S)$. We have
\begin{align*}
a_\chi(S)=&b_\chi(S)=a'_2(S)+a'_3(S)+a'_5(S)+a'_8(S)+a'_{13}(S)\\
=&p^{k-3}\Big(\sum_{\substack{b,x,y\in(\Z/p\Z)^\times\\x,y\not\equiv1(p)\\f_S(b)\equiv0(p)}}\chi\big(y( \alpha p^{-4}(1-y)^{-1}b^2  -\gamma x^{-1})\big)\Big)\cdot\sum_{\substack{d\mid(\alpha, 2\beta,\gamma)\\p\nmid d}}(pd)^{k-1}C_\phi(\frac{p^{-2}D(S)}{(pd)^2})\\
 &+ p^{k-3}\Big(\sum_{\substack{b,x,y\in(\Z/p\Z)^\times\\x,y\not\equiv1(p)}}\chi\big(y(1-x) (\alpha  p^{-4}(1-y)^{-1}(y-x)b^2 +2\beta  bp^{-2}-\gamma x^{-1})\big)\Big)\\
 &\qquad\cdot\sum_{\substack{d\mid(\alpha, 2\beta,\gamma)\\p\nmid d}}d^{k-1}C_\phi(\frac{p^{-2}D(S)}{d^2})\\
 &+p^{k-3}\big(\chi(\gamma)(p+1)-\chi(-\alpha p^{-4})W(\mathbf{1},2\beta p^{-2})\big)\sum_{\substack{d\mid(\alpha, 2\beta,\gamma)\\p\nmid d}}d^{k-1}C_\phi(\frac{p^{-2}D(S)}{d^2})\\
 &+p^{k-3}\chi(-\alpha p^{-4})\Big(\sum_{\substack{x\in\Z/p\Z\\f_S(x)\equiv0(p)}}W(\mathbf{1},2\beta p^{-2}-x\alpha p^{-4})\Big)\sum_{\substack{d\mid(\alpha, 2\beta,\gamma)\\p\nmid d}}(pd)^{k-1}C_\phi(\frac{p^{-2}D(S)}{(pd)^2})\\
 &+p^{2k-4}\Big(\sum\limits_{\substack{b\in(\Z/p^2\Z)^\times\\z\in(\Z/p\Z)^\times\\b=b_\pm\\z\not\equiv 1(p)}}\!\!\!\chi\big(z(1-z)(\gamma  -z\alpha p^{-4}b^2\big)\Big)\sum_{\substack{d\mid(\alpha, 2\beta,\gamma)\\p\nmid d}}d^{k-1}C_\phi(\frac{p^{-4}D(S)}{d^2})\\
 &+p^{k-3}\chi(\alpha p^{-4})(p+1)\sum_{\substack{d\mid(\alpha, 2\beta,\gamma)\\p\nmid d}} d^{k-1}C_\phi(\frac{p^{-2}D(S)}{d^2})\\
 =&p^{-2}\Big(\sum_{\substack{b,x,y\in(\Z/p\Z)^\times\\x,y\not\equiv1(p)\\f_S(b)\equiv0(p)}}\chi\big(y( \alpha p^{-4}(1-y)^{-1}b^2  -\gamma x^{-1})\big)+\sum\limits_{\substack{b\in(\Z/p^2\Z)^\times\\z\in(\Z/p\Z)^\times\\b=b_\pm\\z\not\equiv 1(p)}}\!\!\!\chi\big(z(1-z)(\gamma  -z\alpha p^{-4}b^2)\big)\\
 &\qquad-2\chi(-\alpha p^{-4})\Big)\cdot\sum_{\substack{d\mid(\alpha, 2\beta,\gamma)\\p\nmid d}}(p^2d)^{k-1}C_\phi(\frac{D(S)}{(p^2d)^2})\\
 &+p^{-2}\Big(\sum_{\substack{b,x,y\in(\Z/p\Z)^\times\\x,y\not\equiv1(p)}}\chi\big(y(1-x) (\alpha  p^{-4}(1-y)^{-1}(y-x)b^2 +2\beta  bp^{-2}-\gamma x^{-1})\big)\\
 &+\chi(\gamma)(p+1)-\chi(-\alpha p^{-4})W(\mathbf{1},2\beta p^{-2})+\chi(\alpha p^{-4})(p+1)\Big)\cdot\sum_{\substack{d\mid(\alpha, 2\beta,\gamma)\\p\nmid d}}(pd)^{k-1}C_\phi(\frac{D(S)}{(pd)^2}).
\end{align*}
Let
\begin{align*}
A&=\sum_{\substack{b,x,y\in(\Z/p\Z)^\times\\x,y\not\equiv1(p)\\f_S(b)\equiv0(p)}}\chi\big(y( \alpha p^{-4}(1-y)^{-1}b^2  -\gamma x^{-1})\big)+\sum\limits_{\substack{b\in(\Z/p^2\Z)^\times\\z\in(\Z/p\Z)^\times\\b=b_\pm\\z\not\equiv 1(p)}}\!\!\!\chi\big(z(1-z)(\gamma  -z\alpha p^{-4}b^2)\big)\\
 &\qquad-2\chi(-\alpha p^{-4}).
\end{align*}
We will show that $A$ vanishes.  Indeed, 
\begin{align*}
A=&-2\chi(-\alpha p^{-4})+\sum_{\substack{x\in\Z/p\Z\\b,y\in(\Z/p\Z)^\times\\y\not\equiv1(p)\\f_S(b)\equiv0(p)}}\chi\big(y( \alpha p^{-4}(1-y)^{-1}b^2  -\gamma x)\big)-\sum_{\substack{b,y\in(\Z/p\Z)^\times\\y\not\equiv1(p)\\f_S(b)\equiv0(p)}}\chi\big(y( \alpha p^{-4}(1-y)^{-1}b^2)\big)\\
&-\sum_{\substack{b,y\in(\Z/p\Z)^\times\\y\not\equiv1(p)\\f_S(b)\equiv0(p)}}\chi\big(y( \alpha p^{-4}(1-y)^{-1}b^2  -\gamma )\big)+\sum\limits_{\substack{b\in(\Z/p^2\Z)^\times\\z\in(\Z/p\Z)^\times\\b=b_\pm\\z\not\equiv 1(p)}}\!\!\!\chi\big(z(1-z)(\gamma  -z\alpha p^{-4}b^2)\big)\\
=&-2\chi(-\alpha p^{-4})-2\chi(\alpha p^{-4})\sum_{\substack{y\in(\Z/p\Z)^\times\\y\not\equiv1(p)}}\chi(y(1-y)) \\
&-\sum_{\substack{b,y\in(\Z/p\Z)^\times\\y\not\equiv1(p)\\f_S(b)\equiv0(p)}}\chi\big(y(1-y)(\gamma-y \alpha p^{-4}b^2  )\big)+\sum\limits_{\substack{b\in(\Z/p^2\Z)^\times\\z\in(\Z/p\Z)^\times\\b=b_\pm\\z\not\equiv 1(p)}}\!\!\!\chi\big(z(1-z)(\gamma  -z\alpha p^{-4}b^2)\big)\\
=&\,0.
\end{align*}
Here we have used Lemma \ref{jlemma} for the last equality and two sequential changes of variable on $y$ for the penultimate equality.  Now, applying Lemma \ref{mmlemma} we have that $a_\chi(S)=0$.  To complete the calculation for $p^4\mid\mid D(S)$ we consider the case that  $p\nmid\gamma$ and $D(S)$ is not a square modulo $p$. Since the polynomial $f_S(x)$ has no roots modulo $p$, several of the terms in $a_\chi(S)$ vanish.  Therefore, again applying Lemma \ref{mmlemma}, we have
\begin{align*}
a_\chi(S)=&b_\chi(S)=a'_2(S)+a'_3(S)+a'_5(S)+a'_8(S)+a'_{13}(S)\\
=& p^{k-3}\Big(\sum_{\substack{b,x,y\in(\Z/p\Z)^\times\\x,y\not\equiv1(p)}}\chi\big(y(1-x) (\alpha  p^{-4}(1-y)^{-1}(y-x)b^2 +2\beta  bp^{-2}-\gamma x^{-1})\big)\Big)\cdot\\
 &\qquad \sum_{\substack{d\mid(\alpha, 2\beta,\gamma)\\p\nmid d}}d^{k-1}C_\phi(\frac{p^{-2}D(S)}{d^2})\\
 &+p^{k-3}\big(\chi(\gamma)-p\chi(\gamma)-\chi(-\alpha p^{-4})W(\mathbf{1},2\beta p^{-2})\big)\sum_{\substack{d\mid(\alpha, 2\beta,\gamma)\\p\nmid d}}d^{k-1}C_\phi(\frac{p^{-2}D(S)}{d^2})\\
 &+p^{k-3}\chi(\alpha p^{-4})(1-p)\sum_{\substack{d\mid(\alpha, 2\beta,\gamma)\\p\nmid d}} d^{k-1}C_\phi(\frac{p^{-2}D(S)}{d^2})\\
=&0.
\end{align*}

%p^5 divides -4det(S)
Next, we consider the case when $p^5\mid\mid D(S)$.  There are two possibilities,  $p\mid\gamma$ and $p^3\mid 2\beta$ or $p\nmid\gamma$ and $p^2\mid\mid2\beta$.  We consider first the case when $p\mid\gamma$ so that $p^3\mid 2\beta$. Notice that in this case, $p^2\nmid\gamma$.
We have $a_\chi(S)=b_\chi(S)$ so that 
\begin{align*} 
a_\chi(S)=&  p^{-2} (p-1)\Big(\chi(\alpha p^{-4} )+\chi(-\alpha p^{-4})\Big)\cdot\sum_{\substack{d\mid(\alpha, 2\beta,\gamma)\\p\nmid d}}(pd)^{k-1}C_\phi(\frac{D(S)}{(pd)^2})\\%a2
&-p^{-2}(p-1)\chi( -\alpha p^{-4})\big( \!\!\sum_{\substack{d\mid(\alpha, 2\beta,\gamma)\\p\nmid d}}(pd)^{k-1}C_\phi(\frac{D(S)}{(pd)^2})+\!\!\!\sum_{\substack{d\mid(\alpha, 2\beta,\gamma)\\p\nmid d}}(p^2d)^{k-1}C_\phi(\frac{D(S)}{(p^2d)^2})\big)\\%a3
&+ p^{-2}\chi(-\alpha p^{-4})(p-1)\sum_{\substack{d\mid(\alpha, 2\beta,\gamma)\\p\nmid d}}(p^2d)^{k-1}C_\phi(\frac{D(S)}{(p^2d)^2})\\%a5
&-p^{-2}(p-1)\chi(\alpha p^{-4})\sum_{\substack{d\mid(\alpha, 2\beta,\gamma)\\p\nmid d}} (pd)^{k-1}C_\phi(\frac{D(S)}{(pd)^2})\\%a13
=&0.
\end{align*}
Now assume that $p\nmid\gamma$ so that $p^2\mid\mid2\beta$.  Then we must have that  $\chi(\alpha p^{-4})=\chi(\gamma)$.  Hence, $a_\chi(S)=b_\chi(S)+c_\chi(S)$ so that
\begin{align*}
a_\chi(S)=&p^{-2}\Big(\sum_{\substack{b,x,y\in(\Z/p\Z)^\times\\x,y\not\equiv1(p)\\f_S(b)\equiv0(p)}}\chi\big(y( \alpha p^{-4}(1-y)^{-1}b^2  -\gamma x^{-1})\big)\Big)\cdot\sum_{\substack{d\mid(\alpha, 2\beta,\gamma)\\p\nmid d}}(p^2d)^{k-1}C_\phi(\frac{D(S)}{(p^2d)^2})\\
 &+ p^{-2}\Big(\sum_{\substack{b,x,y\in(\Z/p\Z)^\times\\x,y\not\equiv1(p)}}\chi\big(y(1-x) (\alpha  p^{-4}(1-y)^{-1}(y-x)b^2 +2\beta  bp^{-2}-\gamma x^{-1})\big)\Big)\\
 &\qquad\cdot\sum_{\substack{d\mid(\alpha, 2\beta,\gamma)\\p\nmid d}}(pd)^{k-1}C_\phi(\frac{p^{-2}D(S)}{(pd)^2})\\%a2
 &+p^{-2}\big(\chi(\gamma)+\chi(-\alpha p^{-4})\big)\sum_{\substack{d\mid(\alpha, 2\beta,\gamma)\\p\nmid d}}(pd)^{k-1}C_\phi(\frac{D(S)}{(pd)^2})\\%a3
 &+p^{-2}\chi(-\alpha p^{-4})\Big(\sum_{\substack{x\in\Z/p\Z\\f_S(x)\equiv0(p)}}W(\mathbf{1},2\beta p^{-2}-x\alpha p^{-4})\Big)\sum_{\substack{d\mid(\alpha, 2\beta,\gamma)\\p\nmid d}}(p^2d)^{k-1}C_\phi(\frac{D(S)}{(p^2d)^2})\\%a5
 &+p^{-2}\chi(\alpha p^{-4})\sum_{\substack{d\mid(\alpha, 2\beta,\gamma)\\p\nmid d}} (pd)^{k-1}C_\phi(\frac{D(S)}{(pd)^2})\\%a13
 &-p^{-2}\chi(\alpha p^{-4})\sum_{\substack{d\mid(\alpha,2\beta,\gamma)\\p\nmid d}}(p^2)d^{k-1}C_\phi(\frac{D(S)}{(p^2d)^2})\\%a6
 =&p^{-2}\Big(\chi(\gamma)+\chi(-\gamma)\Big)\cdot\sum_{\substack{d\mid(\alpha, 2\beta,\gamma)\\p\nmid d}}(p^2d)^{k-1}C_\phi(\frac{D(S)}{(p^2d)^2})\\
 &+ p^{-2}\Big(-2\chi(\gamma)-\chi(-\alpha p^{-4})\Big)\cdot\sum_{\substack{d\mid(\alpha, 2\beta,\gamma)\\p\nmid d}}(pd)^{k-1}C_\phi(\frac{p^{-2}D(S)}{(pd)^2})\\%a2
 &+p^{-2}\big(\chi(\gamma)+\chi(-\alpha p^{-4})\big)\sum_{\substack{d\mid(\alpha, 2\beta,\gamma)\\p\nmid d}}(pd)^{k-1}C_\phi(\frac{D(S)}{(pd)^2})\\%a3
 &-p^{-2}\chi(-\alpha p^{-4})\sum_{\substack{d\mid(\alpha, 2\beta,\gamma)\\p\nmid d}}(p^2d)^{k-1}C_\phi(\frac{D(S)}{(p^2d)^2})\\%a5
 &+p^{-2}\chi(\alpha p^{-4})\sum_{\substack{d\mid(\alpha, 2\beta,\gamma)\\p\nmid d}} (pd)^{k-1}C_\phi(\frac{D(S)}{(pd)^2})\\%a13
 &-p^{-2}\chi(\alpha p^{-4})\sum_{\substack{d\mid(\alpha,2\beta,\gamma)\\p\nmid d}}(p^2)d^{k-1}C_\phi(\frac{D(S)}{(p^2d)^2})\\%a6
 =&0.
 \end{align*}
 Here we have used Lemmas \ref{mslemma} and \ref{mmlemma} to evaluate the character sums.
 
%p^6 or p^7
We now assume that $p^6\mid D(S)$ and $p^8\nmid D(S)$.  Assume first that $p\mid \gamma$.  In this case, we see that in fact, $p^2\mid \gamma$ and $p^3\mid 2\beta$.  Hence,
\begin{align*}
a_\chi(S)=&b_\chi(S)+c_\chi(S)+d_\chi(S)\\
=&  p^{-2} (p-1)\Big(\chi(\alpha p^{-4} )+\chi(-\alpha p^{-4})\Big)\cdot\sum_{\substack{d\mid(\alpha, 2\beta,\gamma)\\p\nmid d}}(pd)^{k-1}C_\phi(\frac{D(S)}{(pd)^2})\\%a2
   &-p^{-2}\chi( -\alpha p^{-4})(p-1)\big( \sum_{\substack{d\mid(\alpha, 2\beta,\gamma)\\p\nmid d}}(pd)^{k-1}C_\phi(\frac{D(S)}{(pd)^2})\\
 & \qquad+\sum_{\substack{d\mid(\alpha, 2\beta,\gamma)\\p\nmid d}}(p^2d)^{k-1}C_\phi(\frac{D(S)}{(p^2d)^2})+\sum_{\substack{d\mid(\alpha, 2\beta,\gamma)\\p\nmid d}}(p^3d)^{k-1}C_\phi(\frac{D(S)}{(p^3d)^2})\big)\\%a3
&+p^{-2}(p-1)\chi(-\alpha p^{-4})\sum_{\substack{d\mid(\alpha, 2\beta,\gamma)\\p\nmid d}}\big((p^2d)^{k-1}C_\phi(\frac{D(S)}{(p^2d)^2})+(p^3d)^{k-1}C_\phi(\frac{D(S)}{(p^3d)^2})\big)\\%a5 
&-p^{-2}(p-1)\chi(\alpha p^{-4})\sum_{\substack{d\mid(\alpha, 2\beta,\gamma)\\p\nmid d}} (pd)^{k-1}C_\phi(\frac{D(S)}{(pd)^2})\\%a13
=&0.
\end{align*}
Now suppose that $p\nmid \gamma$ so that $p^2\mid\mid 2\beta$ and $\chi(\alpha p^{-4})=\chi(\gamma)$.  Further we assume that $p^6\mid\mid D(S)$.  Then,
\begin{align*}
a_\chi(S)=&b_\chi(S)+c_\chi(S)+d_\chi(S)\\
= &p^{-2}\Big(\sum_{\substack{b,x,y\in(\Z/p\Z)^\times\\x,y\not\equiv1(p)\\2\beta p^{-2}\equiv2\alpha p^{-4} b(p)}}\chi\big(y(1-x) (\alpha  p^{-4}(1-y)^{-1}(y-x)b^2 +2\beta  bp^{-2}-\gamma x^{-1})\big)\Big)\\
 &\qquad\cdot\Big(\sum_{\substack{d\mid(\alpha, 2\beta,\gamma)\\(p\nmid d}}(p^3d)^{k-1}C_\phi(\frac{D(S)}{(p^3d)^2})+\sum_{\substack{d\mid(\alpha, 2\beta,\gamma)\\p\nmid d}}(p^2d)^{k-1}C_\phi(\frac{D(S)}{(p^2d)^2})\Big)\\
  &+p^{-2}\Big(\sum_{\substack{b,x,y\in(\Z/p\Z)^\times\\x,y\not\equiv1(p)}}\chi\big(y(1-x) (\alpha  p^{-4}(1-y)^{-1}(y-x)b^2 +2\beta  bp^{-2}-\gamma x^{-1})\big)\Big)\\
 &\qquad\cdot\sum_{\substack{d\mid(\alpha, 2\beta,\gamma)\\p\nmid d}}(pd)^{k-1}C_\phi(\frac{D(S)}{(pd)^2})\\%a2
&+p^{-2}\big(\chi(\gamma)+\chi(-\alpha p^{-4})\big)\sum_{\substack{d\mid(\alpha, 2\beta,\gamma)\\p\nmid d}}(pd)^{k-1}C_\phi(\frac{D(S)}{(pd)^2})\\%a3
 &-p^{-2}\chi(-\alpha p^{-4})\sum_{\substack{d\mid(\alpha, 2\beta,\gamma)\\p\nmid d}}\big((p^2d)^{k-1}C_\phi(\frac{D(S)}{(p^2d)^2})+(p^3d)^{k-1}C_\phi(\frac{D(S)}{(p^3d)^2})\big)\\%a5
&-p^{-1}\chi(\alpha p^{-4})\sum_{\substack{d\mid(\alpha, 2\beta,\gamma)\\p\nmid d}}(p^2d)^{k-1}C_\phi(\frac{D(S)}{(p^2d)^2})\\
 &-p^{-2}(1+\chi(D(S)p^{-6})) \chi (\alpha p^{-4})\sum_{\substack{d\mid(\alpha, 2\beta,\gamma)\\p\nmid d}}(p^3d)^{k-1}C_\phi(\frac{D(S)}{(p^3d)^2})\\%a8
 &+p^{-2}\chi(\alpha p^{-4})\sum_{\substack{d\mid(\alpha, 2\beta,\gamma)\\p\nmid d}} (pd)^{k-1}C_\phi(\frac{D(S)}{(pd)^2})\\%a13
 &+p^{-2}(p-1)\chi(\alpha p^{-4})\sum_{\substack{d\mid(\alpha,2\beta,\gamma)\\p\nmid d}}(p^2d)^{k-1}C_\phi(\frac{D(S)}{(p^2d)^2})\\%c
&+p^{-2}\chi(\alpha p^{-4})\chi(D(S)p^{-6})\sum_{\substack{d\mid(\alpha,2\beta,\gamma)\\p\nmid d}}(p^3d)^{k-1}C_\phi(\frac{D(S)}{(p^3d)^2})\\
= &p^{-2}\Big(\chi(\gamma)+\chi(-\gamma))\cdot\Big(\sum_{\substack{d\mid(\alpha, 2\beta,\gamma)\\(p\nmid d}}(p^3d)^{k-1}C_\phi(\frac{D(S)}{(p^3d)^2})+\sum_{\substack{d\mid(\alpha, 2\beta,\gamma)\\p\nmid d}}(p^2d)^{k-1}C_\phi(\frac{D(S)}{(p^2d)^2})\Big)\\
  &-p^{-2}\Big(2\chi(\gamma)+\chi(-\alpha p^{-4})\Big)\cdot\sum_{\substack{d\mid(\alpha, 2\beta,\gamma)\\p\nmid d}}(pd)^{k-1}C_\phi(\frac{D(S)}{(pd)^2})\\%a2
&+p^{-2}\big(\chi(\gamma)+\chi(-\alpha p^{-4})\big)\sum_{\substack{d\mid(\alpha, 2\beta,\gamma)\\p\nmid d}}(pd)^{k-1}C_\phi(\frac{D(S)}{(pd)^2})\\%a3
 &-p^{-2}\chi(-\alpha p^{-4})\sum_{\substack{d\mid(\alpha, 2\beta,\gamma)\\p\nmid d}}\big((p^2d)^{k-1}C_\phi(\frac{D(S)}{(p^2d)^2})+(p^3d)^{k-1}C_\phi(\frac{D(S)}{(p^3d)^2})\big)\\%a5
&-p^{-1}\chi(\alpha p^{-4})\sum_{\substack{d\mid(\alpha, 2\beta,\gamma)\\p\nmid d}}(p^2d)^{k-1}C_\phi(\frac{D(S)}{(p^2d)^2})\\
 &-p^{-2}(1+\chi(D(S)p^{-6})) \chi (\alpha p^{-4})\sum_{\substack{d\mid(\alpha, 2\beta,\gamma)\\p\nmid d}}(p^3d)^{k-1}C_\phi(\frac{D(S)}{(p^3d)^2})\\%a8
 &+p^{-2}\chi(\alpha p^{-4})\sum_{\substack{d\mid(\alpha, 2\beta,\gamma)\\p\nmid d}} (pd)^{k-1}C_\phi(\frac{D(S)}{(pd)^2})\\%a13
 &+p^{-2}(p-1)\chi(\alpha p^{-4})\sum_{\substack{d\mid(\alpha,2\beta,\gamma)\\p\nmid d}}(p^2d)^{k-1}C_\phi(\frac{D(S)}{(p^2d)^2})\\%c
&+p^{-2}\chi(\alpha p^{-4})\chi(D(S)p^{-6})\sum_{\substack{d\mid(\alpha,2\beta,\gamma)\\p\nmid d}}(p^3d)^{k-1}C_\phi(\frac{D(S)}{(p^3d)^2})\\
 =&0.
 \end{align*}
 Here, we have used Lemmas \ref{mslemma} and \ref{mmlemma} to evaluate the character sum.
We are still in the case that $p\nmid \gamma$ so that $p^2\mid\mid 2\beta$ and $\chi(\alpha p^{-4})=\chi(\gamma)$.  Further we assume that $p^7\mid\mid D(S)$. Then,
\begin{align*}
a_\chi(S)&=b_\chi(S)+c_\chi(S)+d_\chi(S)\\
= &p^{-2}\Big(\sum_{\substack{b,x,y\in(\Z/p\Z)^\times\\x,y\not\equiv1(p)\\2\beta p^{-2}\equiv2\alpha p^{-4} b(p)}}\chi\big(y(1-x) (\alpha  p^{-4}(1-y)^{-1}(y-x)b^2 +2\beta  bp^{-2}-\gamma x^{-1})\big)\Big)\\
 &\qquad\cdot\Big(\sum_{\substack{d\mid(\alpha, 2\beta,\gamma)\\(p\nmid d}}(p^3d)^{k-1}C_\phi(\frac{D(S)}{(p^3d)^2})+\sum_{\substack{d\mid(\alpha, 2\beta,\gamma)\\p\nmid d}}(p^2d)^{k-1}C_\phi(\frac{D(S)}{(p^2d)^2})\Big)\\
  &+p^{-2}\Big(\sum_{\substack{b,x,y\in(\Z/p\Z)^\times\\x,y\not\equiv1(p)}}\chi\big(y(1-x) (\alpha  p^{-4}(1-y)^{-1}(y-x)b^2 +2\beta  bp^{-2}-\gamma x^{-1})\big)\Big)\\
 &\qquad\cdot\sum_{\substack{d\mid(\alpha, 2\beta,\gamma)\\p\nmid d}}(pd)^{k-1}C_\phi(\frac{D(S)}{(pd)^2})\\%a2
 &+p^{-2}\big(\chi(\gamma)+\chi(-\alpha p^{-4})\big)\sum_{\substack{d\mid(\alpha, 2\beta,\gamma)\\p\nmid d}}(pd)^{k-1}C_\phi(\frac{D(S)}{(pd)^2})\\%a3
 &-p^{-2}\chi(-\alpha p^{-4})\sum_{\substack{d\mid(\alpha, 2\beta,\gamma)\\p\nmid d}}\big((p^2d)^{k-1}C_\phi(\frac{D(S)}{(p^2d)^2})+(p^2d)^{k-1}C_\phi(\frac{p^{-2}D(S)}{(p^2d)^2})\big)\\%a5
&-p^{-1}\chi( \alpha p^{-4})\sum_{\substack{d\mid(\alpha, 2\beta,\gamma)\\p\nmid d}}(p^2d)^{k-1}C_\phi(\frac{D(S)}{(p^2d)^2})- p^{-2}\chi(\alpha p^{-4})\sum_{\substack{d\mid(\alpha, 2\beta,\gamma)\\p\nmid d}}(p^3d)^{k-1}C_\phi(\frac{D(S)}{(p^3d)^2})\\%a8
 &+p^{-2}\chi(\alpha p^{-4})\sum_{\substack{d\mid(\alpha, 2\beta,\gamma)\\p\nmid d}} (pd)^{k-1}C_\phi(\frac{D(S)}{(pd)^2})\\%a13
 &+p^{-2}(p-1)\chi(\alpha p^{-4})\sum_{\substack{d\mid(\alpha,2\beta,\gamma)\\p\nmid d}}(p^2d)^{k-1}C_\phi(\frac{D(S)}{(p^2d)^2})\\%a''
 =&p^{-2}\Big(\chi(\gamma)+\chi(-\gamma)\Big)\cdot\Big(\sum_{\substack{d\mid(\alpha, 2\beta,\gamma)\\(p\nmid d}}(p^3d)^{k-1}C_\phi(\frac{D(S)}{(p^3d)^2})+\sum_{\substack{d\mid(\alpha, 2\beta,\gamma)\\p\nmid d}}(p^2d)^{k-1}C_\phi(\frac{D(S)}{(p^2d)^2})\Big)\\
  &-p^{-2}\Big(2\chi(\gamma)+\chi(-\alpha p^{-4})\Big)\cdot\sum_{\substack{d\mid(\alpha, 2\beta,\gamma)\\p\nmid d}}(pd)^{k-1}C_\phi(\frac{D(S)}{(pd)^2})\\%a2
 &+p^{-2}\big(\chi(\gamma)+\chi(-\alpha p^{-4})\big)\sum_{\substack{d\mid(\alpha, 2\beta,\gamma)\\p\nmid d}}(pd)^{k-1}C_\phi(\frac{D(S)}{(pd)^2})\\%a3
 &-p^{-2}\chi(-\alpha p^{-4})\sum_{\substack{d\mid(\alpha, 2\beta,\gamma)\\p\nmid d}}\big((p^2d)^{k-1}C_\phi(\frac{D(S)}{(p^2d)^2})+(p^3d)^{k-1}C_\phi(\frac{D(S)}{(p^3d)^2})\big)\\%a5
&-p^{-1}\chi( \alpha p^{-4})\sum_{\substack{d\mid(\alpha, 2\beta,\gamma)\\p\nmid d}}(p^2d)^{k-1}C_\phi(\frac{D(S)}{(p^2d)^2})- p^{-2}\chi(\alpha p^{-4})\sum_{\substack{d\mid(\alpha, 2\beta,\gamma)\\p\nmid d}}(p^3d)^{k-1}C_\phi(\frac{D(S)}{(p^3d)^2})\\%a8
 &+p^{-2}\chi(\alpha p^{-4})\sum_{\substack{d\mid(\alpha, 2\beta,\gamma)\\p\nmid d}} (pd)^{k-1}C_\phi(\frac{D(S)}{(pd)^2})%a13
+p^{-2}(p-1)\chi(\alpha p^{-4})\sum_{\substack{d\mid(\alpha,2\beta,\gamma)\\p\nmid d}}(p^2d)^{k-1}C_\phi(\frac{D(S)}{(p^2d)^2})\\%a''
=&0.
\end{align*}
%p^8
Here we have again used Lemmas \ref{mslemma} and \ref{mmlemma} in evaluating the character sums.  We now consider the case where $p^8\mid D(S)$.  We first assume that $p\mid\gamma$.  In this case, we have that $p^3\mid2\beta$ and in fact $p^2\mid\gamma$.
\begin{align*}
a_\chi(S)&=b_\chi(S)+c_\chi(S)+d_\chi(S)\\
 =& p^{-2} (p-1)\Big(\chi(\alpha p^{-4} )+\chi(-\alpha p^{-4})\Big)\cdot\sum_{\substack{d\mid(\alpha, 2\beta,\gamma)\\p\nmid d}}(pd)^{k-1}C_\phi(\frac{D(S)}{(pd)^2})\\%a2
  &-p^{-2}\chi( -\alpha p^{-4})(p-1) \sum_{\substack{d\mid(\alpha, 2\beta,\gamma)\\p\nmid d}}\big((pd)^{k-1}C_\phi(\frac{D(S)}{(pd)^2})+(p^2d)^{k-1}C_\phi(\frac{D(S)}{(p^2d)^2})\\
  &\qquad+(p^3d)^{k-1}C_\phi(\frac{D(S)}{(p^3d)^2})\big)\\%a3
 &+p^{-2}(p-1)\chi(-\alpha p^{-4})\sum_{\substack{d\mid(\alpha, 2\beta,\gamma)\\p\nmid d}}\big((p^2d)^{k-1}C_\phi(\frac{D(S)}{(p^2d)^2})+(p^3d)^{k-1}C_\phi(\frac{D(S)}{(p^3d)^2})\big)\\%a5
&-p^{-2}(p-1)\chi(\alpha p^{-4})\sum_{\substack{d\mid(\alpha, 2\beta,\gamma)\\p\nmid d}} (pd)^{k-1}C_\phi(\frac{D(S)}{(pd)^2})\\%a13
=&0.
\end{align*}
 Finally, consider the case where $p^8\mid D(S)$ and $p\nmid\gamma$.  Then
 \begin{align*}
 a_\chi(S)&=b_\chi(S)+c_\chi(S)+d_\chi(S)\\
 =&p^{-2}\Big(\sum_{\substack{b,x,y\in(\Z/p\Z)^\times\\x,y\not\equiv1(p)\\2\beta p^{-2}\equiv2\alpha p^{-4} b(p)}}\chi\big(y(1-x) (\alpha  p^{-4}(1-y)^{-1}(y-x)b^2 +2\beta  bp^{-2}-\gamma x^{-1})\big)\Big)\\
 &\qquad\cdot\Big(\sum_{\substack{d\mid(\alpha, 2\beta,\gamma)\\(p\nmid d}}(p^3d)^{k-1}C_\phi(\frac{D(S)}{(p^3d)^2})
+\sum_{\substack{d\mid(\alpha, 2\beta,\gamma)\\p\nmid d}}(p^2d)^{k-1}C_\phi(\frac{D(S)}{(p^2d)^2})\Big)\\
  &+p^{-2}\Big(\sum_{\substack{b,x,y\in(\Z/p\Z)^\times\\x,y\not\equiv1(p)}}\chi\big(y(1-x) (\alpha  p^{-4}(1-y)^{-1}(y-x)b^2 +2\beta  bp^{-2}-\gamma x^{-1})\big)\Big)\\
 &\qquad\cdot\sum_{\substack{d\mid(\alpha, 2\beta,\gamma)\\p\nmid d}}(pd)^{k-1}C_\phi(\frac{D(S)}{(pd)^2})\\%a2
 &+p^{-2}\big(\chi(\gamma)+\chi(-\alpha p^{-4})\big)\sum_{\substack{d\mid(\alpha, 2\beta,\gamma)\\p\nmid d}}(pd)^{k-1}C_\phi(\frac{D(S)}{(pd)^2})\\%a3
   &-p^{-2}\chi(-\alpha p^{-4})\sum_{\substack{d\mid(\alpha, 2\beta,\gamma)\\p\nmid d}}\big((p^2d)^{k-1}C_\phi(\frac{D(S)}{(p^2d)^2})+(p^3d)^{k-1}C_\phi(\frac{D(S)}{(p^3d)^2})\big)\\%a5
&-p^{-1}\chi(\alpha p^{-4})\sum_{\substack{d\mid(\alpha, 2\beta,\gamma)\\p\nmid d}}(p^2d)^{k-1}C_\phi(\frac{D(S)}{(p^2d)^2})\\
 &-p^{-2}\chi(\alpha p^{-4})\sum_{\substack{d\mid(\alpha, 2\beta,\gamma)\\p\nmid d}}\big((p^3d)^{k-1}C_\phi(\frac{D(S)}{(p^3d)^2})+(p^4d)^{k-1}C_\phi(\frac{D(S)}{(p^4d)^2})\big)\\%a8
 &+p^{-2}\chi(\alpha p^{-4})\sum_{\substack{d\mid(\alpha, 2\beta,\gamma)\\p\nmid d}} (pd)^{k-1}C_\phi(\frac{D(S)}{(pd)^2})\\%a13
 &+p^{-2}(p-1)\chi(\alpha p^{-4})\sum_{\substack{d\mid(\alpha,2\beta,\gamma)\\p\nmid d}}(p^2d)^{k-1}C_\phi(\frac{D(S)}{(p^2d)^2})\\%a''
&+p^{-2}\chi(\alpha p^{-4})\sum_{\substack{d\mid(\alpha,2\beta,\gamma)\\p\nmid d}}(p^4d)^{k-1}C_\phi(\frac{D(S)}{(p^4d)^2})\\%a10
=&p^{-2}\Big(\chi(\gamma)+\chi(-\gamma)\Big)\cdot\Big(\sum_{\substack{d\mid(\alpha, 2\beta,\gamma)\\(p\nmid d}}(p^3d)^{k-1}C_\phi(\frac{D(S)}{(p^3d)^2})+\sum_{\substack{d\mid(\alpha, 2\beta,\gamma)\\p\nmid d}}(p^2d)^{k-1}C_\phi(\frac{D(S)}{(p^2d)^2})\Big)\\
  &-p^{-2}\Big(2\chi(\gamma)+\chi(-\alpha p^{-4})\Big)\qquad\cdot\sum_{\substack{d\mid(\alpha, 2\beta,\gamma)\\p\nmid d}}(pd)^{k-1}C_\phi(\frac{D(S)}{(pd)^2})\\%a2
 &+p^{-2}\big(\chi(\gamma)+\chi(-\alpha p^{-4})\big)\sum_{\substack{d\mid(\alpha, 2\beta,\gamma)\\p\nmid d}}(pd)^{k-1}C_\phi(\frac{D(S)}{(pd)^2})\\%a3
   &-p^{-2}\chi(-\alpha p^{-4})\sum_{\substack{d\mid(\alpha, 2\beta,\gamma)\\p\nmid d}}\big((p^2d)^{k-1}C_\phi(\frac{D(S)}{(p^2d)^2})+(p^3d)^{k-1}C_\phi(\frac{D(S)}{(p^3d)^2})\big)\\%a5
&-p^{-1}\chi(\alpha p^{-4})\sum_{\substack{d\mid(\alpha, 2\beta,\gamma)\\p\nmid d}}(p^2d)^{k-1}C_\phi(\frac{D(S)}{(p^2d)^2})\\
 &-p^{-2}\chi(\alpha p^{-4})\sum_{\substack{d\mid(\alpha, 2\beta,\gamma)\\p\nmid d}}\big((p^3d)^{k-1}C_\phi(\frac{D(S)}{(p^3d)^2})+(p^4d)^{k-1}C_\phi(\frac{D(S)}{(p^4d)^2})\big)\\%a8
 &+p^{-2}\chi(\alpha p^{-4})\sum_{\substack{d\mid(\alpha, 2\beta,\gamma)\\p\nmid d}} (pd)^{k-1}C_\phi(\frac{D(S)}{(pd)^2})\\%a13
 &+p^{-2}(p-1)\chi(\alpha p^{-4})\sum_{\substack{d\mid(\alpha,2\beta,\gamma)\\p\nmid d}}(p^2d)^{k-1}C_\phi(\frac{D(S)}{(p^2d)^2})\\%a''
&+p^{-2}\chi(\alpha p^{-4})\sum_{\substack{d\mid(\alpha,2\beta,\gamma)\\p\nmid d}}(p^4d)^{k-1}C_\phi(\frac{D(S)}{(p^4d)^2})\\%a10
=&0.
\end{align*}
 Hence, $a_\chi(S)=0$.\\
 
 %Case (v)
 \noindent{\bf Case (v):}  By assumption, $p^2\mid 2\beta$ and $p^5\mid \alpha$.  Then by Theorem \ref{maintheorem} we have 
 \begin{align*}
a_\chi(S) =&(1-p^{-1})\chi(\gamma)a(S)-p^{-1}\chi( \gamma)\sum_{\substack{b\in(\Z/p\Z)^\times}}a(S[\begin{bmatrix}1&-bp^{-1}\\&1\end{bmatrix}])\\
 &-p^{k-3}\sum_{b\in(\Z/p\Z)^\times}\sum_{\substack{x\in(\Z/p\Z)^\times\\x\not\equiv1(p)}}\chi\big((1-x) (2\beta  bp^{-2}-\gamma x^{-1})\big)a(S[\begin{bmatrix}p^{-1}&-bp^{-2}\\&1\end{bmatrix}])\\
&+p^{k-3}\chi(\gamma)(1-p+p\chi(-4\det(S)p^{-4}))a(S[\begin{bmatrix}p^{-1}&\\&1\end{bmatrix}])\\
&-p^{2k-4}\chi( -\gamma)\sum\limits_{\substack{b\in(\Z/p^2\Z)^\times\\p^2\mid(\alpha p^{-4}b^2-2\beta p^{-2}b+\gamma)}}a(S[\begin{bmatrix}p^{-1}&-bp^{-3}\\&p^{-1}\end{bmatrix}])\\
=&(1-p^{-1})\chi(\gamma)a(\begin{bmatrix}\alpha&\beta\\\beta&\gamma\end{bmatrix})-p^{-1}\chi( \gamma)\sum_{\substack{b\in(\Z/p\Z)^\times}}a(\begin{bmatrix}\alpha&\beta-b\alpha p^{-1}\\\beta-b\alpha p^{-1}&\gamma-2\beta bp^{-1}+b^2\alpha p^{-2}\end{bmatrix})\\
 &-p^{k-3}\sum_{\substack{b,x\in(\Z/p\Z)^\times\\x\not\equiv1(p)}}\chi\big((1-x) (2\beta  bp^{-2}-\gamma x^{-1})\big)a(\begin{bmatrix}\alpha p^{-2}&\beta p^{-1}-b\alpha p^{-3}\\\beta p^{-1}-b\alpha p^{-3}&\gamma-b(2\beta p^{-2})+b^2\alpha p^{-4}\end{bmatrix})\\
&+p^{k-3}\chi(\gamma)(1-p+p\chi(-4\det(S)p^{-4}))a(\begin{bmatrix}\alpha p^{-2}&\beta p^{-1}\\\beta p^{-1}&\gamma\end{bmatrix})\\
&-p^{2k-4}\chi( -\gamma)\sum\limits_{\substack{b\in(\Z/p^2\Z)^\times\\p^2\mid(\alpha p^{-4}b^2-2\beta p^{-2}b+\gamma)}}a(\begin{bmatrix}\alpha p^{-2}&\beta p^{-2}-b\alpha p^{-4}\\\beta p^{-2}-b\alpha p^{-4}&p^{-2}(\gamma-2\beta p^{-2} b+b^2\alpha p^{-4})\end{bmatrix})\\
\end{align*}
We consider the first and second summands together.  They both clearly vanish in the case that $p\mid\gamma$.  Assume that $p\nmid\gamma$.  Then $p\nmid \gamma-2\beta bp^{-1}+b^2\alpha p^{-2}$.  Hence, applying the formula \eqref{maassfourier}, we see that the two terms sum to zero.
 For the third summand, we consider two cases.  First, assume that $p\mid \gamma$.  We claim that the summand is zero.  Evidently, $\chi( -\gamma x^{-1}+2\beta b p^{-2})=0$ if $p^3\mid2\beta$, so assume that $p^2\mid\mid 2\beta$.  This implies that $p\nmid \gamma-b(2\beta p^{-2})+b^2\alpha p^{-4}$.  Hence in this case, the third term is 
$$
-p^{k-3}\Big(\sum_{\substack{b,x\in(\Z/p\Z)^\times\\x\not\equiv1(p)}}\chi\big((1-x) 2\beta  bp^{-2}\big)\Big)\cdot\sum_{\substack{d\mid(\alpha, 2\beta,\gamma)\\p\nmid d}}d^{k-1}C_\phi(\frac{p^{-2}D(S)}{d^2}).
$$
Summing over $b$, we see that this is zero.  Now consider the case when $p\nmid\gamma$.  Then, $p\mid(\alpha',2\beta',\gamma')$ if and only if $\gamma\equiv 2\beta p^{-2} b (p)$, and moreover $p^2\nmid (\alpha',2\beta',\gamma')$.  Hence the third summand is 
\begin{align*}
&-p^{k-3}\Big(\sum_{\substack{b,x\in(\Z/p\Z)^\times\\x\not\equiv1(p)\\\gamma\equiv 2\beta p^{-2} b (p)}}\chi(-\gamma x)\Big)\cdot \sum_{\substack{d\mid(\alpha, 2\beta,\gamma)\\p\nmid d}}(d^{k-1}C_\phi(\frac{p^{-2}D(S)}{d^2})+(pd)^{k-1}C_\phi(\frac{p^{-2}D(S)}{(pd)^2}))\\
&-p^{k-3}\Big(\sum_{\substack{b,x\in(\Z/p\Z)^\times\\x\not\equiv1(p)\\\gamma\not\equiv 2\beta p^{-2} b (p)}}\chi\big((1-x)( 2\beta  bp^{-2}-\gamma x^{-1})\big)\Big)\cdot\sum_{\substack{d\mid(\alpha, 2\beta,\gamma)\\p\nmid d}}d^{k-1}C_\phi(\frac{p^{-2}D(S)}{d^2})\\
=&-p^{k-3}\Big(\sum_{\substack{b,x\in(\Z/p\Z)^\times\\x\not\equiv1(p)\\\gamma\equiv 2\beta p^{-2} b (p)}}\chi(-\gamma x) \Big)\cdot\sum_{\substack{d\mid(\alpha, 2\beta,\gamma)\\p\nmid d}}(pd)^{k-1}C_\phi (\frac{p^{-2}D(S)}{(pd)^2})\\
&-p^{k-3}\Big(\sum_{\substack{b,x\in(\Z/p\Z)^\times\\x\not\equiv1(p)}}\chi\big((1-x)( 2\beta  bp^{-2}-\gamma x^{-1})\big)\Big)\cdot\sum_{\substack{d\mid(\alpha, 2\beta,\gamma)\\p\nmid d}}d^{k-1}C_\phi(\frac{p^{-2}D(S)}{d^2})\\
=&p^{k-3}\chi(-\gamma)\sum_{\substack{b\in(\Z/p\Z)^\times\\\gamma\equiv 2\beta p^{-2} b (p)}}\sum_{\substack{d\mid(\alpha, 2\beta,\gamma)\\p\nmid d}}(pd)^{k-1}C_\phi (\frac{p^{-2}D(S)}{(pd)^2})\\
&+p^{k-3}\chi(\gamma)W(\mathbf{1},2\beta p^{-2})\sum_{\substack{d\mid(\alpha, 2\beta,\gamma)\\p\nmid d}}d^{k-1}C_\phi(\frac{p^{-2}D(S)}{d^2}).
\end{align*}
For the fourth summand, we see that the term is zero if $p\mid \gamma$, so assume that $p\nmid\gamma$.  Then, the term is 
\begin{align*}
&p^{k-3}\chi(\gamma)(1-p+p\chi(-4\det(S)p^{-4}))\sum_{\substack{d\mid(\alpha, 2\beta,\gamma)\\p\nmid d}}d^{k-1}C_\phi(\frac{p^{-2}D(S)}{d^2}).
\end{align*}
For the fifth summand, we observe first that if $p\mid\gamma$ or $p^3\mid 2\beta$ the sum vanishes. So assume, that $p\nmid\gamma$ and $p^2\mid\mid 2\beta$.  Then, we see that there is a unique solution to the polynomial modulo $p^2$, so that the fifth summand is
\begin{align*}
&-p^{2k-4}\chi( -\gamma)\sum_{\substack{d\mid(\alpha, 2\beta,\gamma)\\p\nmid d}}d^{k-1}C_\phi(\frac{p^{-4}D(S)}{d^2}).
\end{align*}
Adding the five summands together, we see that $a_\chi(S)=0$.
\end{proof}

%%%%%%%%%%%%%%%%%%%%%%%%%%%%%%%%%%
%%%%%%%%%%%SOME LEMMAS%%%%%%%%%%%%%%%%
%%%%%%%%%%%%%%%%%%%%%%%%%%%%%%%%%%%
\section{Lemmas}
\label{lemmassec}
Continuing with the notation of Section \ref{notation} for $p$ and $\chi$, we collect several technical lemmas.   Let $A,B,C \in \Z$ and set $D=B^2-4AC$. In this section we will often  use the following polynomial identity:
\begin{equation}
\label{quadideq}
4A(AX^2+BX+C)=(2AX+B)^2-D.
\end{equation}
The following lemmas are applied in the proofs of Theorem \ref{maintheorem} and Corollary \ref{corollary}, typically with $A=\alpha p^{-4}$, $B=-2\beta p^{-2}$, and $C=\gamma$.

\begin{lemma}
\label{p22lemma}
Let $s_1,s_2 \in \Z$ with $(s_1,p)=(s_2,p)=1$. If $s_1^2 \equiv s_2^2\ (p^2)$, then $s_1 \equiv \pm s_2\ (p^2)$. 
\end{lemma}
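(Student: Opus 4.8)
The plan is to work from the difference-of-squares factorization and exploit that $p$ is an odd prime. First I would rewrite the hypothesis $s_1^2 \equiv s_2^2 \ (p^2)$ as $p^2 \mid s_1^2 - s_2^2 = (s_1-s_2)(s_1+s_2)$. In particular $p$ divides this product, so, $p$ being prime, it divides at least one of the two factors $s_1 - s_2$ or $s_1 + s_2$.

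The key step is the observation that $p$ cannot divide \emph{both} factors at once. Indeed, if $p \mid s_1 - s_2$ and $p \mid s_1 + s_2$, then $p$ divides their sum $(s_1 - s_2) + (s_1 + s_2) = 2 s_1$; since $p$ is odd we have $p \nmid 2$, so $p \mid s_1$, contradicting the assumption $(s_1,p)=1$. Hence exactly one of $s_1 - s_2$, $s_1 + s_2$ is divisible by $p$, while the other is a unit modulo $p$.

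From here the conclusion follows by a two-line case analysis. If $p \mid s_1 - s_2$ and $p \nmid s_1 + s_2$, then since $p^2$ divides the product $(s_1-s_2)(s_1+s_2)$ and the factor $s_1+s_2$ contributes no power of $p$, the entire factor $p^2$ must divide $s_1 - s_2$, giving $s_1 \equiv s_2 \ (p^2)$. Symmetrically, if $p \mid s_1 + s_2$ and $p \nmid s_1 - s_2$, one obtains $p^2 \mid s_1 + s_2$, that is $s_1 \equiv -s_2 \ (p^2)$. In either case $s_1 \equiv \pm s_2 \ (p^2)$, as desired.

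I do not expect a genuine obstacle here; this is an elementary unique-factorization argument. The only point demanding care is the use of the oddness of $p$, which is invoked precisely to exclude the scenario in which $p$ divides both $s_1 - s_2$ and $s_1 + s_2$ simultaneously; without it the clean dichotomy "all of $p^2$ lands in one factor" would break down, and the case analysis would have to be reorganized.
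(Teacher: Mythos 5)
Your proof is correct and takes essentially the same route as the paper: both rest on the factorization $s_1^2-s_2^2=(s_1-s_2)(s_1+s_2)$ together with the key observation that $p$, being odd, cannot divide both factors, since $p\mid 2s_1$ would contradict $(s_1,p)=1$. The only difference is presentational — the paper argues by contradiction (assuming $s_1\not\equiv \pm s_2\ (p^2)$ and deducing $p\mid s_1-s_2$ and $p\mid s_1+s_2$), whereas you run the same dichotomy directly.
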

\begin{proof}
Assume that $s_1 \not\equiv s_2\ (p^2)$ and $s_1 \not\equiv -s_2\ (p^2)$; we will obtain a contradiction.  We have $(s_1-s_2)(s_1+s_2) \equiv 0\ (p^2)$. It follows from  our assumption that $s_1 \equiv s_2\ (p)$ and $s_1 \equiv -s_2\ (p)$. Therefore, $2s_1 \equiv 0\ (p)$. This contradicts $(s_1,p) = 1$. 
\end{proof}

\begin{lemma}
\label{a2lemma}
Let $A,B,C \in \Z$, and assume that $(A,p)=1$. Set $f(X) = AX^2+BX+C$. Let $D =B^2-4AC$. For $r \in \Z$ and $k \in \{1,2\}$ we say that $r$ satisfies $R(k)$ if 
$$
\text{R(k):\quad $2Ar +B \equiv 0 \, (p^{k-1})$ and $f(r) \equiv 0\, (p^{k}).$}
$$
Let $r_1,r_2 \in \Z$ and $k \in \{1,2\}$. Assume that $r_1\equiv r_2\, (p)$. Then $r_1$ satisfies $R(k)$ if and only if $r_2$ satisfies $R(k)$. Thus, the concept of an element of $\Z/p\Z$ satisfying $R(k)$ is well-defined. 
\renewcommand{\theenumi}{\roman{enumi}}%
\begin{enumerate}
\item Assume that $(D,p)=1$ and  $D$ is not a square mod $p$. Then no element of $\Z/p\Z$ satisfies $R(k)$ for $k \in \{1,2\}$.
\item Assume that $(D,p)=1$ and $D$ is a square mod $p$, so that there exists $s \in \Z$ such that $D \equiv s^2\, (p)$. The set of elements of $\Z/p \Z$ that satisfy $R(1)$ is $\{(-B+s)(2A)^{-1},(-B-s)(2A)^{-1}\}$. No element of $ \Z/p\Z$  satisfies $R(2)$.
\item Assume that $p||D$. Then the set of elements of $\Z/p\Z$ that satisfy $R(1)$ is $\{-B(2A)^{-1}\}$. No element of $\Z/p\Z$ satisfies  $R(2)$.
\item Assume that $p^2|D$. Then the set of elements of $\Z/p\Z$ that satisfy $R(1)$ is $\{-B(2A)^{-1}\}$, and the set of elements of $\Z/p\Z$ that satisfy $R(2)$ is $\{-B(2A)^{-1}\}$. 
\end{enumerate}
\end{lemma}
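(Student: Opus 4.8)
The plan is to reduce everything to the single quadratic identity \eqref{quadideq}, which with $X=r$ reads $4A\,f(r)=(2Ar+B)^2-D$. Since $p$ is odd and $(A,p)=1$, the quantity $4A$ is a unit modulo every power of $p$, so $f(r)\equiv 0\,(p^m)$ is equivalent to $(2Ar+B)^2\equiv D\,(p^m)$. First I would record what the two conditions actually say. For $k=1$ the congruence $2Ar+B\equiv 0\,(p^{0})$ is vacuous, so $R(1)$ is simply $f(r)\equiv 0\,(p)$, equivalently $(2Ar+B)^2\equiv D\,(p)$. For $k=2$ the condition is $2Ar+B\equiv 0\,(p)$ together with $f(r)\equiv 0\,(p^2)$; and once $2Ar+B\equiv 0\,(p)$ holds one has $(2Ar+B)^2\equiv 0\,(p^2)$, so $4A\,f(r)\equiv -D\,(p^2)$ and hence $f(r)\equiv 0\,(p^2)$ holds if and only if $p^2\mid D$. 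Thus $R(2)$ is equivalent to the pair of conditions $2Ar+B\equiv 0\,(p)$ and $p^2\mid D$.

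For the well-definedness claim I would argue as follows. Because $f$ has integer coefficients, $r_1\equiv r_2\,(p)$ gives $f(r_1)\equiv f(r_2)\,(p)$, which settles the case $k=1$. For $k=2$, the first condition $2Ar+B\equiv 0\,(p)$ plainly depends only on $r\bmod p$. If in addition both $r_1,r_2$ satisfy it, then $(2Ar_i+B)^2\equiv 0\,(p^2)$ for $i=1,2$, so by \eqref{quadideq} we get $4A\,f(r_1)\equiv -D\equiv 4A\,f(r_2)\,(p^2)$, and cancelling the unit $4A$ yields $f(r_1)\equiv f(r_2)\,(p^2)$. Hence $r_1$ satisfies $R(2)$ if and only if $r_2$ does, and $R(k)$ descends to a well-defined condition on $\Z/p\Z$.

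With these reductions the four cases are immediate upon substituting $u=2Ar+B$, a bijection of $\Z/p\Z$. For $R(1)$ I solve $u^2\equiv D\,(p)$: in case (i) $D$ is a nonzero nonsquare, so there is no solution; in case (ii) the solutions are $u\equiv\pm s$, i.e. $r\equiv(-B\pm s)(2A)^{-1}$, which are distinct since $s\not\equiv 0\,(p)$; in cases (iii) and (iv) $p\mid D$ forces $u\equiv 0$, i.e. the single class $r\equiv -B(2A)^{-1}$. For $R(2)$ I apply the criterion above: in cases (i)--(iii) one has $p^2\nmid D$, so no element satisfies $R(2)$, while in case (iv) $p^2\mid D$ and $R(2)$ reduces to $u\equiv 0\,(p)$, giving the single class $r\equiv -B(2A)^{-1}$. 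This exhausts all the assertions.

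The computation is short; the one place that needs genuine care, rather than a routine congruence, is the well-definedness of $R(2)$ modulo $p$. The naive expectation that $r_1\equiv r_2\,(p)$ forces $f(r_1)\equiv f(r_2)\,(p^2)$ is false in general, and the statement is rescued only because the defining condition $2Ar+B\equiv 0\,(p)$ lets me replace $f(r)$ by $-D(4A)^{-1}$ modulo $p^2$ through \eqref{quadideq}; I would take care to invoke that identity, and not mere substitution, at this step. Lemma \ref{p22lemma} is not needed here, since $R(2)$ never requires comparing two square roots modulo $p^2$.
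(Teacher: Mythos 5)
Your proposal is correct and follows essentially the same route as the paper's proof: both arguments rest entirely on the identity \eqref{quadideq} together with the fact that $4A$ and $2A$ are units modulo powers of $p$, so the four cases reduce to solving $(2Ar+B)^2\equiv D$ at the appropriate level. Your version is slightly streamlined in that you first extract the clean equivalences $R(1)\iff (2Ar+B)^2\equiv D\,(p)$ and $R(2)\iff 2Ar+B\equiv 0\,(p)$ and $p^2\mid D$, and your cautionary remark about well-definedness of $R(2)$ correctly identifies the same subtlety the paper's cross-term computation handles implicitly.
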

\begin{proof}
Suppose that $r_1$ satisfies $R(k)$. Write $r_2=r_1+np$ for some $n \in \Z$. Then $2Ar_2+B \equiv 2Ar_1+B +2Anp \equiv 0\, (p^{k-1})$. Also, using \eqref{quadideq}, a computation shows that $4Af(r_2) \equiv 0\, (p^k)$. Hence, $r_2$ satisfies $R(k)$. 

(i) This follows from \eqref{quadideq}. 

(ii) The identity \eqref{quadideq} implies that the elements of $\{(-B+s)(2A)^{-1},(-B-s)(2A)^{-1}\} \subset \Z/p\Z$ satisfy $R(1)$. 
Conversely, assume that $r \in \Z$ satisfies $R(1)$. Then \eqref{quadideq} implies that $(2Ar+B)^2 \equiv D \equiv s^2\, (p)$; hence, the image of $r$ in $\Z/p\Z$ is in $ \{(-B+s)(2A)^{-1},(-B-s)(2A)^{-1}\}\subset \Z/p\Z $. Assume that $r \in \Z$ satisfies $R(2)$. Then by \eqref{quadideq} we have $(2Ar+B)^2 \equiv D \equiv s^2\, (p)$. Therefore, $2Ar+B  \equiv \pm s\, (p)$. By $R(2)$, we also have $2Ar+B \equiv 0\, (p)$. Hence, $s \equiv 0\, (p)$, a contradiction. 

(iii) The identity \eqref{quadideq} and the assumption $p||D$ imply that the element of $\{-B(2A)^{-1}\} \subset \Z/p\Z$ satisfies $R(1)$. Conversely, suppose that $r \in \Z$ satisfies $R(1)$. By \eqref{quadideq} we have $(2Ar+B)^2 \equiv 0\, (p)$, so that $r = -B(2A)^{-1}$ in $\Z/p\Z$. 
Suppose that there exists $r \in \Z$ that satisfies $R(2)$; we will obtain a contradiction. By \eqref{quadideq} we have $(2Ar+B)^2 \equiv D \equiv 0\, (p^2)$. Therefore, since $D \equiv 0\, (p)$,  $2Ar+B \equiv 0\, (p)$. This implies that $(2Ar+B)^2 \equiv 0\, (p^2)$. Hence, $D \equiv 0\, (p^2)$, contradicting $p||D$. 

(iv) This is similar to previous cases.
\end{proof}

\begin{lemma}
\label{quadeqlemma}
Let $A,B,C \in \Z$, and assume that $(A,p)=1$. Set $f(X) = AX^2+BX+C$. Let $D =B^2-4AC$. Let $S$ be the set of $r \in \Z/p^2\Z$ such that $f(r) \equiv 0\ (p^2)$. 
\renewcommand{\theenumi}{\roman{enumi}}%
\begin{enumerate}
\item Assume that $(D,p)=1$. Then $S$ is non-empty if and only if $D$ is a square mod $p^2$. Moreover, if $D$ is a square in $\Z/p^2\Z$, and $s \in \Z$ is such that $D \equiv s^2\ (p^2)$, then $S=\{(-B+s)(2A)^{-1}, (-B-s)(2A)^{-1} \}$.
\item If $p||D$, then $S$ is empty.
\item If $p^2|D$, then $S=\{-B(2A)^{-1}+py: y \in \Z/p\Z\}$. 
\end{enumerate}
\end{lemma}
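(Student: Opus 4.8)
The plan is to reduce all three cases to the single problem of describing the square roots of $D$ modulo $p^2$, by completing the square via the identity \eqref{quadideq}, and then to read off each case directly.

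First I would note that since $(A,p)=1$ and $p$ is odd, both $2A$ and $4A$ are units in $\Z/p^2\Z$. Consequently the affine map $r\mapsto u:=2Ar+B$ is a bijection of $\Z/p^2\Z$ onto itself, with inverse $u\mapsto (u-B)(2A)^{-1}$. By \eqref{quadideq} we have $4Af(r)=(2Ar+B)^2-D$, and because $4A$ is a unit mod $p^2$ this gives $f(r)\equiv 0\ (p^2)$ if and only if $u^2\equiv D\ (p^2)$. Hence $S$ is exactly the image under $u\mapsto(u-B)(2A)^{-1}$ of the set $T:=\{u\in\Z/p^2\Z:u^2\equiv D\ (p^2)\}$ of square roots of $D$, and it suffices to determine $T$ in each of the three cases and then transport the answer back through the bijection.

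For (i), with $(D,p)=1$: the set $T$ is nonempty precisely when $D$ is a square mod $p^2$, which is the first assertion. If $D\equiv s^2\ (p^2)$ for some fixed $s$, then $s$ is coprime to $p$ and $\pm s\in T$; conversely any $u\in T$ satisfies $u^2\equiv s^2\ (p^2)$ with $(u,p)=1$, so $u\equiv\pm s\ (p^2)$ by Lemma \ref{p22lemma}. Thus $T=\{s,-s\}$, and applying the bijection yields $S=\{(-B+s)(2A)^{-1},(-B-s)(2A)^{-1}\}$. For (ii), with $p\|D$: any $u\in T$ would satisfy $u^2\equiv D\equiv 0\ (p)$, forcing $p\mid u$ and hence $p^2\mid u^2\equiv D$, contradicting $p\|D$; so $T=\emptyset$ and $S$ is empty. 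For (iii), with $p^2\mid D$: here $u^2\equiv 0\ (p^2)$ holds if and only if $p\mid u$, so $T=\{pw:w\in\Z/p\Z\}$; since $(2A)^{-1}$ is a unit one may reparametrize the multiples of $p$, giving $S=\{-B(2A)^{-1}+py:y\in\Z/p\Z\}$.

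There is no serious obstacle here; the argument is essentially bookkeeping once the completion of the square is in place, and it relies crucially on $p$ being odd so that $2$ and $4$ are invertible modulo $p^2$. The only genuine external input is Lemma \ref{p22lemma}, which pins down the square roots of a unit modulo $p^2$ to be exactly $\pm s$ and thereby produces the precise two-element description in (i); the remaining cases require only the elementary observations that $u^2\equiv 0\ (p^2)$ forces $p\mid u$ and, conversely, that $p\mid u$ forces $p^2\mid u^2$.
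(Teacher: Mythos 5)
Your proof is correct and takes essentially the same approach as the paper's: both complete the square via the identity \eqref{quadideq} to convert $f(r)\equiv 0\ (p^2)$ into $(2Ar+B)^2\equiv D\ (p^2)$, invoke Lemma \ref{p22lemma} to pin the square roots down to $\pm s$ in case (i), and use the same elementary divisibility observations in cases (ii) and (iii). Your packaging of the argument through the single bijection $r\mapsto 2Ar+B$ is only a cosmetic streamlining of what the paper carries out case by case.
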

\begin{proof}
(i) Assume that $r \in \Z$ is such that $f(r) \equiv 0\ (p^2)$. By \eqref{quadideq}, 
$(2Ar+B)^2 \equiv D\ (p^2)$. Thus, $D$ is a square in $\Z/p^2\Z$. Conversely, assume that $D$ is a square in $\Z/p^2\Z$, and let $s \in \Z$ be such that $D \equiv s^2\ (p^2)$. By \eqref{quadideq} if $r \in \Z$ is such that $r \equiv (-B\pm s)(2A)^{-1}\ (p^2)$, then $f(r)\equiv 0\ (p^2)$. It follows that $(-B\pm s)(2A)^{-1} \in S$. Finally, assume that $r' \in S$. Again, we have $(2Ar'+B)^2 \equiv D\ (p^2)$. By Lemma \ref{p22lemma} we have $2Ar'+B\equiv \pm s\ (p^2)$, i.e., $r'\equiv(-B\pm s)(2A)^{-1}\ (p^2)$. Thus, $S$ is as specified.

(ii) Assume that $S$ is non-empty; we will obtain a contradiction. Let $r \in S$.
By \eqref{quadideq}, $(2Ar+B)^2 \equiv D\ (p^2)$. Since $p|D$, it follows that $2Ar+B\equiv 0\ (p)$. This implies, in turn, that $p^2 |D$, a contradiction. 

(iii) In this case we have the identity $f(X) \equiv (2AX+B)^2\ (p^2)$. It follows that if $r \in \{-B(2A)^{-1}+py: y \in \Z/p\Z\}$, then $f(r) \equiv 0\ (p^2)$, i.e., $r \in S$. Conversely, suppose that $r \in S$. By the just mentioned identity, $(r+B(2A)^{-1})^2\equiv 0\ (p^2)$. This implies that $r+B(2A)^{-1} \equiv 0\ (p)$, so that $r \in \{-B(2A)^{-1}+py: y \in \Z/p\Z\}$. 
\end{proof}

\begin{lemma}
\label{ccondlemma}
Let the notation be as in Lemma \ref{quadeqlemma}. Let $r \in \Z$, and assume that the reduction mod $p^2$ of $r$ is in $S$, so that $f(r) \equiv 0\ (p^2)$; by Lemma \ref{quadeqlemma}, $(D,p)=1$, or $p^2|D$. For $k$ a positive integer, define the statement $R(k)$ as:
$$
\text{R(k): \quad $(2A)r \equiv -B\ (p^k)$ and $f(r) \equiv 0\ (p^{k+2}).$}
$$
Then:
\renewcommand{\theenumi}{\roman{enumi}}%
\begin{enumerate}
\item If $(p,D)=1$, then $R(1)$ does not hold.
\item Assume that $p^2||D$. Then R(1) holds if and only if  $r \equiv -B(2A)^{-1}+py\ (p^2)$ for some $y \in \Z$ with $(2A y)^2 \equiv Dp^{-2}\ (p)$. Also, R(2) does not hold. 
\item Assume that $p^3|| D$. Then R(1) holds if and only if $r \equiv -B(2A)^{-1}\ (p^2)$. Moreover, R(2) does not hold. 
\item Assume that $p^4 |D$. Then R(1) holds if and only if $r \equiv -B(2A)^{-1} (p^2)$. Also, R(2) holds if and only if $r \equiv -B(2A)^{-1} (p^2)$. 
\end{enumerate}
\end{lemma}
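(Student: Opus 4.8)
The plan is to collapse both clauses of $R(k)$ into congruences on the single quantity $u=2Ar+B$, and then settle each case by inspecting the power of $p$ dividing $D$. Since $p$ is odd and $(A,p)=1$, we have $(2A,p)=(4A,p)=1$; hence the first clause $(2A)r\equiv -B\ (p^k)$ is equivalent to $u\equiv 0\ (p^k)$, and also to $r\equiv -B(2A)^{-1}\ (p^k)$. For the second clause I would invoke the identity \eqref{quadideq}, which reads $4Af(r)=u^2-D$; as $(4A,p)=1$, the condition $f(r)\equiv 0\ (p^{k+2})$ is equivalent to $u^2\equiv D\ (p^{k+2})$. Therefore $R(k)$ holds if and only if
\[
u\equiv 0\ (p^k)\qquad\text{and}\qquad u^2\equiv D\ (p^{k+2}).
\]
Every assertion of the lemma will then be read off from these two congruences together with the elementary fact that the exact power of $p$ dividing the square $u^2$ is even.

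With this reformulation, cases (i), (iii) and (iv) are pure divisibility arguments. For (i), if $R(1)$ held then $u\equiv 0\ (p)$ would give $p^2\mid u^2$, and reducing $u^2\equiv D\ (p^3)$ modulo $p^2$ would force $p^2\mid D$, contradicting $(D,p)=1$. For (iii) and (iv), assume $p^3\mid D$; then the clause $u^2\equiv D\ (p^3)$ is just $p^3\mid u^2$, which by the evenness of the $p$-adic valuation of a square is equivalent to $p^4\mid u^2$, i.e.\ to $u\equiv 0\ (p^2)$, while conversely $u\equiv 0\ (p^2)$ makes $u^2\equiv 0\equiv D\ (p^3)$ automatic. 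Thus $R(1)$ holds exactly when $r\equiv -B(2A)^{-1}\ (p^2)$ in both cases. For $R(2)$ one has $u\equiv 0\ (p^2)$, whence $u^2\equiv 0\ (p^4)$, so the clause $u^2\equiv D\ (p^4)$ becomes $D\equiv 0\ (p^4)$: this fails when $p^3\mid\mid D$, giving the negative assertion of (iii), and holds when $p^4\mid D$, so that in (iv) $R(2)$ is again equivalent to $r\equiv -B(2A)^{-1}\ (p^2)$.

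The remaining case (ii) is the one I expect to be the main obstacle, since there $R(1)$ is governed by a genuine quadratic residue condition rather than by divisibility alone, and the conversion into a statement about $r$ requires tracking $u$ to second order in $p$. Writing $D=p^2e$ with $(e,p)=1$ and $u=pw$ (as forced by $u\equiv 0\ (p)$), the clause $u^2\equiv D\ (p^3)$ becomes $w^2\equiv e\equiv Dp^{-2}\ (p)$. To express this through $r$, I would parametrize $r\equiv -B(2A)^{-1}+py\ (p^2)$ and compute $u=2Ar+B\equiv 2Apy\ (p^2)$, so that $w=u/p\equiv 2Ay\ (p)$; substituting yields the stated criterion $(2Ay)^2\equiv Dp^{-2}\ (p)$. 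Finally, $R(2)$ again forces $D\equiv 0\ (p^4)$, which is impossible under $p^2\mid\mid D$, so $R(2)$ does not hold. Throughout, the standing hypothesis that the reduction of $r$ modulo $p^2$ lies in $S$ is used only to guarantee, via Lemma \ref{quadeqlemma}, that $(D,p)=1$ or $p^2\mid D$, so that the four cases are exhaustive.
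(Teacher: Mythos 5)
Your proposal is correct and takes essentially the same route as the paper's proof: both rest on the identity \eqref{quadideq} to convert the clause $f(r)\equiv 0\ (p^{k+2})$ into $(2Ar+B)^2\equiv D\ (p^{k+2})$, parametrize $r\equiv -B(2A)^{-1}+py\ (p^2)$ in case (ii), and settle (i), (iii), (iv) by comparing the $p$-divisibility of $(2Ar+B)^2$ with that of $D$. Your uniform reformulation of $R(k)$ in terms of $u=2Ar+B$, and your direct deduction of $p^4\mid D$ to rule out $R(2)$ in case (ii) (where the paper instead argues by contradiction via $(y,p)=1$), are only mild streamlinings of the same argument.
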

\begin{proof}
(i) Assume that $R(1)$ holds; we will obtain a contradiction. By \eqref{quadideq}, $(2Ar+B)^2 \equiv D\ (p^3)$; this implies $(2Ar+B)^2 \equiv D\ (p)$. By $R(1)$, $2Ar+B\equiv 0\ (p)$. Hence, $D \equiv 0\ (p)$, a contradiction. 

(ii) Assume that $p^2||D$. By (3) of Lemma \ref{quadeqlemma}, there exists $y,z \in \Z$ such that $r \equiv -B(2A)^{-1} +py+p^2z\ (p^3)$. By \eqref{quadideq}, $f(r) \equiv 0\ (p^3)  \iff(2A y)^2 \equiv Dp^{-2}\ (p)$. The first assertion of (2) follows from this equivalence. Assume that $R(2)$ holds; we will obtain a contradiction. Since $R(2)$ holds we have $(2A)r \equiv -B\ (p^2)$. On the other hand, $R(2)$ implies $R(1)$, so that by the first assertion of (ii) we have $r \equiv -B(2A)^{-1}+py\ (p^2)$ for some $y \in \Z$ with $(2A y)^2 \equiv Dp^{-2}\ (p)$; in particular, $(y,p)=1$ since $p^2||D$. This is a contradiction. 

(iii) Assume that $p^3||D$. Assume that $R(1)$ holds. Since $R(1)$ holds by \eqref{quadideq} we have $(2Ar+B)^2 \equiv D \equiv 0\ (p^3)$. Therefore, $r \equiv -B(2A)^{-1} (p^2)$. Assume that $r \equiv -B(2A)^{-1} (p^2)$. Then $4Af(r)\equiv (2Ar+B)^2-D  \equiv 0\ (p^3)$. Hence, R(1) holds. Assume that $R(2$) holds; we will obtain a contradiction. Now  $0 \equiv 4Af(r)\equiv (2Ar+B)^2 - D \equiv -D\ (p^4)$. This implies that $p^4 |D$, a contradiction. 

(iv) Assume that $p^4|D$.  Assume that $R(1)$ holds. By \eqref{quadideq},   $0\equiv 4A f(r)\equiv (2Ar+B)^2\ (p^3)$. It follows that $r \equiv -B(2A)^{-1} (p^2)$. Assume that $r \equiv -B(2A)^{-1} (p^2)$. Then \eqref{quadideq} implies that $f(r)\equiv 0\ (p^4)$, i.e., $R(2)$ holds. Finally, since $R(2)$ implies $R(1)$, the proof is complete. 
\end{proof}

\begin{lemma}
\label{mslemma}
Let $A,B,C \in \Z$. Assume that $(A,p)=1$ and $B^2-4AC \equiv 0\, (p)$. Then
\begin{align*}
&\sum_{\substack{b,x,y\in(\Z/p\Z)^\times\\x,y\not\equiv1(p)\\ b \equiv -B(2A)^{-1}\, (p)}}
\chi\big(y(1-x)( A (1-y)^{-1}(y-x)b^2 -B b-C x^{-1} )\big)\\
=&\sum_{\substack{b,x,y\in(\Z/p\Z)^\times\\x,y\not\equiv1(p)\\f_S(b)\equiv0(p)}}\chi\big(y(A(1-y)^{-1}b^2  -Cx^{-1})\big)=\chi(C)+\chi(-C).
\end{align*}
\end{lemma}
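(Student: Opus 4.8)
The plan is to split on whether $p$ divides $C$. Since $(A,p)=1$ and $B^2-4AC\equiv 0\,(p)$, we have $C\equiv B^2(4A)^{-1}\,(p)$, so $p\mid C$ if and only if $p\mid B$; in that case the double root $-B(2A)^{-1}$ is $\equiv 0\,(p)$, hence lies outside $(\Z/p\Z)^\times$, so both index sets are empty and both sums vanish, while $\chi(C)+\chi(-C)=\chi(0)+\chi(0)=0$. Thus the interesting case is $p\nmid C$, which I treat for the rest of the argument. Here I would first record, using the identity \eqref{quadideq}, that modulo $p$ the condition $f_S(b)\equiv 0$ (i.e.\ $Ab^2+Bb+C\equiv 0$) is equivalent to $(2Ab+B)^2\equiv 0$, hence to $b\equiv -B(2A)^{-1}\,(p)$. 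This shows the left-hand and middle sums run over the \emph{same} unique unit $b$, and it also yields the two relations $Ab^2\equiv C$ and $Bb\equiv -2Ab^2\equiv -2C\,(p)$, so in particular $\chi(Ab^2)=\chi(C)$.

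Next I would prove the first equality by an algebraic simplification of the left-hand summand. Substituting $Bb\equiv -2Ab^2$ and $C\equiv Ab^2$ into the argument of $\chi$ and factoring out $Ab^2$, the bracket becomes $(1-y)^{-1}(y-x)+2-x^{-1}$, which over the common denominator $x(1-y)$ has numerator $-x^2-xy+2x+y-1=(1-x)(x+y-1)$. Hence the left-hand summand equals
$$
\chi\Big(Ab^2\,\frac{y(1-x)^2(x+y-1)}{x(1-y)}\Big)=\chi(Ab^2)\,\chi\Big(\frac{y(x+y-1)}{x(1-y)}\Big),
$$
using that $(1-x)^2$ is a nonzero square so $\chi$ ignores it. Performing the same substitution $Ab^2\equiv C$ in the middle summand gives $\chi(C)\,\chi\big(y(x+y-1)/(x(1-y))\big)$, and since $\chi(Ab^2)=\chi(C)$ the two summands agree termwise. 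I expect spotting this factorization of the numerator as $(1-x)(x+y-1)$ to be the main obstacle, since it is exactly what produces the square $(1-x)^2$ that collapses the extra $(1-x)$ factor distinguishing the two sums.

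It then remains to prove the second equality, i.e.\ to evaluate
$$
T:=\sum_{\substack{x,y\in(\Z/p\Z)^\times\\x,y\not\equiv1(p)}}\chi\Big(\frac{y(x+y-1)}{x(1-y)}\Big)=\sum_{\substack{x,y\in(\Z/p\Z)^\times\\x,y\not\equiv1(p)}}\chi(y)\chi(1-y)\chi(x)\chi(x+y-1),
$$
where I have used multiplicativity and $\chi(u^{-1})=\chi(u)$. Summing first over $x$, Lemma \ref{jlemma} gives $\sum_{x\in\Z/p\Z}\chi(x)\chi(x+(y-1))=-1$ (as $y-1\not\equiv 0$), and correcting for the excluded $x=0,1$ (contributing $0$ and $\chi(y)$) yields inner sum $-1-\chi(y)$. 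Then
$$
T=-\sum_{\substack{y\ne0,1}}\chi(y)\chi(1-y)-\sum_{\substack{y\ne0,1}}\chi(1-y),
$$
using $\chi(y)^2=1$. By Lemma \ref{jlemma} the first sum is $-\chi(-1)$ and the second is $-1$, so $T=\chi(-1)+1$. Multiplying by the pulled-out factor $\chi(Ab^2)=\chi(C)$ gives $\chi(C)\big(1+\chi(-1)\big)=\chi(C)+\chi(-C)$, completing the proof. Throughout, the only genuine care needed beyond the factorization is the bookkeeping of the excluded values $x,y\equiv 0,1$ when passing between the full sums of Lemma \ref{jlemma} and the restricted sums appearing here.
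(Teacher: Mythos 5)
Your proposal is correct and follows essentially the same route as the paper: both exploit that $b$ is pinned to the double root $-B(2A)^{-1}$ (so $Ab^2\equiv C$ and $Bb\equiv -2C\ (p)$), factor the bracket as $(1-x)(x+y-1)x^{-1}(1-y)^{-1}$ times $C$ so that the extra $(1-x)$ collapses into the square $(1-x)^2$, and then evaluate the remaining sum with Lemma \ref{jlemma} together with the same bookkeeping for the excluded values $x,y\equiv 0,1$. Your separate treatment of the degenerate case $p\mid C$ is a harmless addition the paper handles implicitly via $\chi(0)=0$.
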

\begin{proof}
To prove the first equality, substitute $-Bb\equiv Ab^2+C(p)$. To complete the proof, we calculate
\begin{align*}
&\sum_{\substack{b,x,y\in(\Z/p\Z)^\times\\x,y\not\equiv1(p)\\ b \equiv -B(2A)^{-1}\, (p)}}
\chi\big(y(1-x)(A (1-y)^{-1}(y-x)b^2 -B b-C x^{-1} )\big)\\
&=\sum_{\substack{b,x,y\in(\Z/p\Z)^\times\\x,y\not\equiv1(p)}}
\chi\big(y(1-x)(A(1-y)^{-1}(y-x)(-B(2A)^{-1})^2)+B(-B(2A)^{-1}) -C x^{-1})\big)\\
&=\sum_{\substack{x,y\in(\Z/p\Z)^\times\\x,y\not\equiv1(p)}}
\chi\big(y(1-x) C(-x^{-1}+2+(1-y)^{-1}(y-x) )\big)\\
&=\sum_{\substack{x,y\in(\Z/p\Z)^\times\\x,y\not\equiv1(p)}}
\chi\big(y(1-x)C(x-1)(x+y-1)x^{-1}(1-y)^{-1}\big)\\
&=\chi(-C) \sum_{\substack{y\in(\Z/p\Z)^\times\\ y\not\equiv1(p)}}
\chi(y(1-y)) \sum_{\substack{x\in(\Z/p\Z)^\times\\ x\not\equiv1(p)}}  \chi(x) \chi(x+y-1)  \\
&=\chi(-C) \sum_{\substack{y\in(\Z/p\Z)^\times\\ y\not\equiv1(p)}}
\chi(y(1-y)) \big(-\chi(y)+ \sum_{\substack{x\in \Z/p\Z}}  \chi(x) \chi(x+y-1)  \big) \\
&=\chi(-C) \sum_{\substack{y\in(\Z/p\Z)^\times\\ y\not\equiv1(p)}}
\chi(y(1-y)) (-\chi(y)-1 ) \\
&=-\chi(-C) \sum_{\substack{y\in(\Z/p\Z)^\times\\ y\not\equiv1(p)}}
\chi(1-y) -\chi(-C) \sum_{\substack{y\in(\Z/p\Z)^\times\\ y\not\equiv1(p)}}
\chi(y(1-y))\\
&=\chi(-C)  +\chi(C).
\end{align*}
Here we have used Lemma \ref{jlemma}.
\end{proof}

\begin{lemma}
\label{mmlemma}
Let $A,B,C \in \Z$. Define
$$
M=\sum_{\substack{b,x,y\in(\Z/p\Z)^\times\\x,y\not\equiv1(p)}}
 \chi\big(y(1-x)(A (1-y)^{-1}(y-x)b^2-B b  -C x^{-1})\big).
$$
\renewcommand{\theenumi}{\roman{enumi}}%
\begin{enumerate}
\item If $A \not \equiv 0\, (p)$, $B \not \equiv 0 \, (p)$, $C \not\equiv 0\, (p) $ and $B^2-4AC$ is not a square mod $p$, then 
$$M=(p-1)\chi(C)-\chi(-A)+(p-1)\chi(A).$$
\item If $A \not \equiv 0\, (p)$, $B \not \equiv 0 \, (p)$, $C \not\equiv 0\, (p) $, $B^2-4AC  \not\equiv 0\, (p) $, and $B^2-4AC$ is a square mod $p$, then
$$M=-(p+1)\chi(C)-\chi(-A)-(p+1)\chi(A).$$
\item If $A \not \equiv 0\, (p)$, $B \not \equiv 0\, (p)$, $C \not \equiv 0\, (p)$ and $B^2-4AC \equiv 0\, (p)$, then
$$M=-2\chi(C)-\chi(-A).$$
\item If $A \not \equiv 0\, (p)$, $B \not \equiv  0\, (p)$ and $C\equiv  0\, (p)$, then
$$M=-\chi(-A)-\chi(A).$$
\item If $A \not \equiv 0\, (p)$, $B  \equiv 0\, (p)$, $C \not\equiv  0\, (p)$, and $B^2-4AC$ is a square mod $p$, then
$$M=-2\chi(C)-(p+1)\chi(A).$$
\item If $A \not \equiv 0\, (p)$, $B  \equiv 0\, (p)$, $C \not\equiv  0\, (p)$, and $B^2-4AC$ is not a square mod $p$, then
$$
M=(p-1)\chi(A).
$$
\item If $A \equiv 0\, (p)$, $B \not \equiv 0\, (p)$ and $C \not \equiv 0\, (p)$, then
$$
M=-\chi(C).
$$
\item If $A \not \equiv 0\, (p)$, $B \equiv 0\, (p)$ and $C \equiv 0\, (p)$, then
$$M=(p-1)\chi(-A)+(p-1)\chi(A).$$
\item If $A \equiv 0\, (p)$, $B \not \equiv 0\, (p)$ and $C \equiv 0\, (p)$, then
$$M=0.$$
\end{enumerate}

\end{lemma}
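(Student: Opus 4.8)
The plan is to perform the sum over $b$ first, for each fixed pair $(x,y)$, and then to carry out the remaining double sum over $x,y$ using Lemma~\ref{jlemma} and its bilinear companion. Since $\chi$ is multiplicative, I would write the summand as $\chi(y)\chi(1-x)\chi(Q_{x,y}(b))$, where
\[
Q_{x,y}(b)=A(1-y)^{-1}(y-x)b^2-Bb-Cx^{-1}
\]
is a polynomial in $b$ with leading coefficient $\lambda:=A(1-y)^{-1}(y-x)$ and whose discriminant in $b$ equals $B^2+4AC(1-y)^{-1}(y-x)x^{-1}$; using $4AC=B^2-D$ this simplifies to $x^{-1}(1-y)^{-1}\bigl(B^2y(1-x)-D(y-x)\bigr)$, so it vanishes mod $p$ exactly when $B^2y(1-x)\equiv D(y-x)\,(p)$. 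First I would dispose of the degenerate locus where $\lambda\equiv0$, namely $A\equiv0$ or $x\equiv y$: there $Q_{x,y}$ is linear or constant in $b$, and the inner sum over $b\in(\Z/p\Z)^\times$ equals $-\chi(-Cx^{-1})$ if $B\not\equiv0$ and $(p-1)\chi(-Cx^{-1})$ if $B\equiv0$ (interpreting $\chi(0)=0$ when $C\equiv0$). These boundary terms settle the cases $A\equiv0$, namely (vii) and (ix), outright, and supply the term of size $p$ in the $B\equiv0$ cases (v) and (vi) via the diagonal $x\equiv y$.

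On the main locus $A\not\equiv0$, $x\not\equiv y$, the polynomial $Q_{x,y}$ is a genuine quadratic in $b$. Extending the sum from $(\Z/p\Z)^\times$ to $\Z/p\Z$ by subtracting the value $\chi(-Cx^{-1})$ at $b=0$ and applying Lemma~\ref{jlemma}, I would obtain
\[
\sum_{b\in(\Z/p\Z)^\times}\chi(Q_{x,y}(b))=-\chi(\lambda)+p\,\chi(\lambda)\,\mathbf{1}\!\left[B^2y(1-x)\equiv D(y-x)\right]-\chi(-Cx^{-1}).
\]
Writing $\chi(\lambda)=\chi(A)\chi(1-y)\chi(y-x)$ and $\chi(-Cx^{-1})=\chi(-C)\chi(x)$ then decomposes $M$ into three double sums over $(x,y)$, together with the diagonal already handled above. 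The $-\chi(\lambda)$ sum and the $-\chi(-Cx^{-1})$ sum are bilinear in $x$ and in $y$ and evaluate directly from the second assertion of Lemma~\ref{jlemma}: summing over $x$ through $\chi(1-x)\chi(y-x)$, respectively $\chi(x)\chi(1-x)$, and then over $y$ through $\chi(y)\chi(1-y)$ and $\chi(1-y)$, these contribute the case-independent amounts $-\chi(-A)-\chi(A)$ and $-\chi(C)-\chi(-C)$, while the diagonal contributes $\chi(-C)$ when $B\not\equiv0$.

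The crux, and the main obstacle, is the discriminant-locus sum
\[
p\,\chi(A)\sum_{\substack{x,y\in(\Z/p\Z)^\times,\ x\neq y\\ B^2y(1-x)\equiv D(y-x)}}\chi(y)\chi(1-y)\chi(1-x)\chi(y-x).
\]
I would first observe that this locus is empty on the off-diagonal whenever $B\equiv0$ with $D\not\equiv0$ (cases (v), (vi)) or whenever $B\not\equiv0$ with $D\equiv0$ (cases (iii), (iv)), so the sum vanishes there; it equals the full off-diagonal sum when $B\equiv0$ and $D\equiv0$ (case (viii)), where it combines with the $-\chi(\lambda)$ sum to give $(p-1)\bigl(\chi(-A)+\chi(A)\bigr)$. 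The genuinely two-dimensional case is $B\not\equiv0$, $D\not\equiv0$ (cases (i), (ii)): here the congruence $y(B^2-D)\equiv x(B^2y-D)$ determines $x\equiv 4ACy\,(B^2y-D)^{-1}$ for each admissible $y$, reducing the sum to a single sum over $y$; after clearing denominators this is a sum $\sum_y\chi(\text{quadratic in }y)$ evaluable by Lemma~\ref{jlemma}, and the dependence of its value on whether $D$ is a square or a non-square mod $p$ is precisely what flips the sign of the $p$-term between (i) and (ii). The remaining work is bookkeeping: in each of the nine cases I would assemble the pieces and simplify using the relations among $\chi(A),\chi(C),\chi(-1)$ forced by the hypotheses—$D\equiv0\Rightarrow\chi(A)=\chi(C)$ when $B\not\equiv0$, and $\chi(-A)=\pm\chi(C)$ according as $-4AC$ is a square when $B\equiv0$—to recover the stated values. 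Managing these overlapping configurations of $A,B,C,D$ consistently is the part that demands the most care.
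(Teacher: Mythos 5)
Your plan is correct, and it takes a genuinely different route from the paper's proof, though both rest on Lemma~\ref{jlemma}. The paper first performs three changes of variables ($b\mapsto bx^{-1}$, $y\mapsto 1-y$, $x\mapsto x^{-1}$) so that $x$ enters the argument of $\chi$ linearly, then sums over $x$ \emph{first} via the bilinear part of Lemma~\ref{jlemma}; the case condition that emerges is whether $Ab^2+Bb+C\equiv 0\,(p)$, and the remaining $b$- and $y$-sums are again handled by Lemma~\ref{jlemma}, with the nine cases assembled one by one at the end. You instead sum over $b$ first, treating the summand as a quadratic in $b$ with leading coefficient $\lambda=A(1-y)^{-1}(y-x)$, which replaces the paper's root-counting condition by the discriminant locus $B^2y(1-x)\equiv D(y-x)\,(p)$ in the $(x,y)$-plane; after splitting off the degenerate locus ($A\equiv0$ or $x\equiv y$) you parametrize the locus by $x\equiv 4ACy(B^2y-D)^{-1}$ and reduce it to a one-variable sum. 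I verified your intermediate claims: the discriminant identity, the inner-sum formula, the case-independent contributions $-\chi(-A)-\chi(A)$ and $-\chi(C)-\chi(-C)$, the diagonal contributions $\chi(-C)$ for $B\not\equiv0$ and $-(p-1)\chi(-C)$ for $B\equiv0$, the emptiness or fullness of the locus in cases (iii)--(vi) and (viii), and, in cases (i)--(ii), the identities $1-x\equiv D(1-y)(D-B^2y)^{-1}$ and $y-x\equiv B^2y(1-y)(D-B^2y)^{-1}$ on the locus, which make the locus sum equal $-p\,\chi(A)\chi(D)\bigl(1+\chi(AC)\bigr)$; combining with $\chi(A)=\chi(C)$ in case (iii) and $\chi(-A)=\pm\chi(C)$ in cases (v)--(vi) reproduces all nine stated values. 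One small slip in your narration: you file case (iv) under ``$B\not\equiv0$ with $D\equiv0$,'' but there $C\equiv0\,(p)$ forces $D=B^2\not\equiv0\,(p)$; the locus is nonetheless empty, since the congruence degenerates to $x(1-y)\equiv0\,(p)$, which has no admissible solutions, so your conclusion is unaffected. As to what each route buys: the paper's substitutions keep every step a mechanical application of Lemma~\ref{jlemma} and avoid solving for the locus, while your organization isolates case-independent main terms, dispatches (vii) and (ix) at the outset, and makes structurally visible why the square/non-square dichotomy for $B^2-4AC$ flips the sign of the $p$-term between (i) and (ii).
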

\begin{proof}
We have:
\begin{align*}
M&=\sum_{\substack{b,x,y\in(\Z/p\Z)^\times\\x,y\not\equiv1(p)}}
\chi((1-x)y) \chi( -C x^{-1}-B b+A(1-y)^{-1}(y-x)b^2  )\\
&=\sum_{\substack{b,x,y\in(\Z/p\Z)^\times\\x,y\not\equiv1(p)}}
\chi(x(1-x)y(1-y)) \\
&\hspace{1.5in}\cdot \chi(-C (1-y)-B(1-y)(xb)+A (y-x)x^{-1}(xb)^2 )\\
\intertext{(change variables $b\mapsto bx^{-1}$:)}
&=\sum_{\substack{b,x,y\in(\Z/p\Z)^\times\\x,y\not\equiv1(p)}}
\chi(x(1-x)y(1-y)) \chi(-C(1-y)-B(1-y)b+A(y-x) x^{-1}b^2 )\\
\intertext{(change variables $y \mapsto 1-y:$)}
&=\sum_{\substack{b,x,y\in(\Z/p\Z)^\times\\x,y\not\equiv1(p)}}
\chi(x(1-x)y(1-y)) \chi(-C y -B yb+A (1-y-x) x^{-1}b^2 )\\
&=\sum_{\substack{b,x,y\in(\Z/p\Z)^\times\\x,y\not\equiv1(p)}}
\chi(x^{-1}-1)\chi(y(1-y)) \chi(A (1-y)b^2  x^{-1} -C y -Byb-Ab^2 ))\\
\intertext{(change variables $x \mapsto x^{-1}$:)}
&=\sum_{\substack{b,x,y\in(\Z/p\Z)^\times\\x,y\not\equiv1(p)}}
\chi(y(1-y))\chi(x-1) \chi(A (1-y)b^2  x -C y -Byb-A b^2 )\\
&=
\Big( -
\sum_{\substack{y\in(\Z/p\Z)^\times\\y\not\equiv1(p)}} \chi(y(1-y))\chi(-1)  \big(-\chi(-C y ) + \sum_{\substack{b \in \Z/p\Z}}
\chi(-C y -Byb-A b^2 )\big)\\
&\qquad+\sum_{\substack{b,y\in(\Z/p\Z)^\times\\y\not\equiv1(p)}}\chi(y(1-y))\sum_{\substack{x\in \Z/p\Z}}
\chi(x-1) \chi(A (1-y)b^2  x -C y -Byb-A b^2 ) \Big)\\
&=
\Big(
 \chi(C  )\big( -1+\sum_{\substack{y\in \Z/p\Z}} \chi(1-y)\big) \\
&\qquad -\chi(-1) \sum_{\substack{y\in(\Z/p\Z)^\times\\y\not\equiv1(p)}} \chi(y(1-y)) \sum_{\substack{b \in \Z/p\Z}}
\chi(-C y -B yb-A b^2 ) \\
&\qquad\qquad+\sum_{\substack{b,y\in(\Z/p\Z)^\times\\y\not\equiv1(p)}}\chi(y(1-y))\sum_{\substack{x\in \Z/p\Z}}
\chi(x-1) \chi(A (1-y)b^2  x -C y -B yb-A b^2 ) \Big)\\
M&=
\Big(
 -\chi(C  )
-\chi(-1) \sum_{\substack{y\in(\Z/p\Z)^\times\\y\not\equiv1(p)}} \chi(y(1-y)) \sum_{\substack{b \in \Z/p\Z}}
\chi(-C y -B yb-A b^2 ) \\
&\qquad\qquad+\sum_{\substack{b,y\in(\Z/p\Z)^\times\\y\not\equiv1(p)}}\chi(y(1-y))\sum_{\substack{x\in \Z/p\Z}}
\chi(x-1) \chi(A (1-y)b^2  x -C y -B yb-A b^2 ) \Big).
\end{align*}

(i) In this case,
\begin{align*}
M&=
\Big(
 -\chi(C  ) -\chi(-1) \chi\big( (4AC)B^{-2}(1-(4AC)B^{-2}) \big)(p-1)\chi(-A)\\
&\qquad\qquad-\chi(-1) \sum_{\substack{y\in(\Z/p\Z)^\times\\y\not\equiv1(p)\\ y \not\equiv (4AC)B^{-2}(p)}} \chi(y(1-y)) (-\chi(-A))\\
&\qquad\qquad+\sum_{\substack{b,y\in(\Z/p\Z)^\times\\y\not\equiv1(p)}}\chi(y(1-y))(-\chi(A(1-y))) \Big)\\
&=
\Big((p-1)\chi(C)+\chi(A) \sum_{\substack{y\in(\Z/p\Z)^\times\\y\not\equiv1(p)}} \chi(y(1-y))-\chi(A)(p-1)\sum_{\substack{y\in(\Z/p\Z)^\times\\y\not\equiv1(p)}}\chi(y) \Big)\\
&=
\Big((p-1)\chi(C)-\chi(-A)+(p-1)\chi(A) \Big)\\
&=(p-1)C)-\chi(-A)+(p-1)\chi(A).
\end{align*}
(ii) In this case,
\begin{align*}
M=&\Big(
 -\chi(C  )
-\chi(-1) \chi\big( (4AC)B^{-2}(1-(4AC)B^{-2}) \big)(p-1)\chi(-A)\\
&\qquad\qquad-\chi(-1) \sum_{\substack{y\in(\Z/p\Z)^\times\\y\not\equiv1(p)\\ y \not\equiv (4AC)B^{-2}(p)}} \chi(y(1-y)) (-\chi(-A))\\
&\qquad\qquad+(p-1)\chi(A)\sum_{\substack{b,y\in(\Z/p\Z)^\times\\y\not\equiv1(p)\\Ab^2+Bb+C\equiv0 (p)}}\chi(y)-\chi(A)\sum_{\substack{b,y\in(\Z/p\Z)^\times\\y\not\equiv1(p)\\Ab^2+Bb+C\not\equiv0 (p)}}\chi(y) \Big)\\
=&\Big(
 -(p+1)\chi(C)
-\chi(-A)-2(p-1)\chi(A)+(p-3)\chi(A) \Big)\\
=&-(p+1)\chi(C)-\chi(-A)-(p+1)\chi(A).
\end{align*}
(iii) In this case
\begin{align*}
M=&
\Big(
 -\chi(C  )
+\chi(A) \sum_{\substack{y\in(\Z/p\Z)^\times\\y\not\equiv1(p)}} \chi(y(1-y)) \\
&\qquad\qquad+(p-1)\chi(A)\sum_{\substack{b,y\in(\Z/p\Z)^\times\\y\not\equiv1(p)\\Ab^2+Bb+C\equiv0 (p)}}\chi(y)-\chi(A)\sum_{\substack{b,y\in(\Z/p\Z)^\times\\y\not\equiv1(p)\\Ab^2+Bb+C\not\equiv0 (p)}}\chi(y) \Big)\\
=&
\Big(
 -\chi(C  )
-\chi(-A)-(p-1)\chi(A)+(p-2)\chi(A) \Big)\\
=&-\chi(C)-\chi(-A)-\chi(A).
\end{align*}
(iv) In this case
\begin{align*}
M&=
\Big(\chi(A) \sum_{\substack{y\in(\Z/p\Z)^\times\\y\not\equiv1(p)}} \chi(y(1-y)) \\
&\qquad\qquad+(p-1)\chi(A)\sum_{\substack{b,y\in(\Z/p\Z)^\times\\y\not\equiv1(p)\\Ab^2+Bb+C\equiv0 (p)}}\chi(y)-\chi(A)\sum_{\substack{b,y\in(\Z/p\Z)^\times\\y\not\equiv1(p)\\Ab^2+Bb+C\not\equiv0 (p)}}\chi(y) \Big)\\
&=
\Big(-\chi(-A) -(p-1)\chi(A)+(p-2)\chi(A) \Big)\\
&=-\chi(-A)-\chi(A).
\end{align*}
(v) In this case
\begin{align*}
M&=
\Big(
 -\chi(C  )
+\chi(A) \sum_{\substack{y\in(\Z/p\Z)^\times\\y\not\equiv1(p)}} \chi(y(1-y))  \\
&\qquad\qquad+(p-1)\chi(A)\sum_{\substack{b,y\in(\Z/p\Z)^\times\\y\not\equiv1(p)\\Ab^2+Bb+C\equiv0 (p)}}\chi(y)-\chi(A)\sum_{\substack{b,y\in(\Z/p\Z)^\times\\y\not\equiv1(p)\\Ab^2+Bb+C\not\equiv0 (p)}}\chi(y) \Big)\\
&=
\Big(
 -\chi(C  )
-\chi(-A) -2(p-1)\chi(A)+(p-3)\chi(A))\\
&=-\chi(C)-\chi(-A)-(p+1)\chi(A).
\end{align*}
(vi) In this case
\begin{align*}
M&=
\Big(
 -\chi(C  )
-\chi(-A)  -\chi(A)\sum_{\substack{b,y\in(\Z/p\Z)^\times\\y\not\equiv1(p)}}\chi(y) \Big)\\
&=-\chi(C)-\chi(-A)+(p-1)\chi(A).
\end{align*}
(vii) In this case
\begin{align*}
M&=-\chi(C).
\end{align*}
(viii) In this case
\begin{align*}
M&=
\Big(
-(p-1)\chi(A) \sum_{\substack{y\in(\Z/p\Z)^\times\\y\not\equiv1(p)}} \chi(y(1-y)) -\chi(A)\sum_{\substack{b,y\in(\Z/p\Z)^\times\\y\not\equiv1(p)}}\chi(y) \Big)\\
&=(p-1)\chi(-A)+(p-1)\chi(A).
\end{align*}
(ix) It is clear that in this case, $M=0$.
\end{proof}


\begin{thebibliography}{XXXX}
\bibitem[An]{An}{A. Andrianov,} {\em Twisting of Siegel modular forms with characters, and L-functions}, St. Petersburg Math. J. {\bf 20} (2009), 851--871.
\bibitem[BK]{BK}{A. Brumer and K. Kramer,} {\em Paramodular abelian varieties of odd conductor}, Trans. Amer. Math. Soc. {\bf 366} (2014), 2463--2516.
\bibitem[EZ]{EZ}{M. Eichler and D. Zagier}, {\em The Theory of Jacobi Forms} Progress in Mathematics, vol. 55,  Birkh\"auser, 1985.
\bibitem[JR2]{JR2} {J. Johnson-Leung and B. Roberts,} {\em Twists of paramodular vectors}, Int. J. Number Theory {\bf 10} (2014) 1043--1065.
\bibitem[JR3]{JR3} {J. Johnson-Leung and B. Roberts,} {\em Twisting of Siegel Paramodular Forms}, arXiv:1404.4596 (2014).
\bibitem[LMF]{LMFDB}The LMFDB Collaboration, The L-functions and Modular Forms Database, http://www.lmfdb.org, 2015.
\bibitem[Ma]{Ma}{Hans Maass,} {\em \"Uber eine Spezialschar von Modulformen zweiten Grades (II)}, Inventiones Mathematicae {\bf 53} (1979), 249--253.
\bibitem[PY]{PY}{C. Poor and D. Yuen,} {\em Paramodular cusp forms}, Math. Comp. {\bf 84} (2015), 1401--1438.
\bibitem[RS]{RS} {B. Roberts and R. Schmidt,} {\em Local Newforms for $\GSp(4)$}, Lecture Notes in Mathematics, vol.1918, Springer, Berlin, 2007.
\bibitem[RS2]{RS2}{B. Roberts and R. Schmidt,} {\em On modular forms for the paramodular group}, In {\em Automorphic Forms and Zeta Functions}, Proceedings of the Conference in Memory of Tsuneo Arakawa, (Eds)  S. Boecherer, T. Ibukiyama, M. Kaneko,  and F. Sato,  World Scientific, 2006. 
\bibitem[RT]{RT} {N. Ryan and G. Tornar\'ia,} {\em A B\"ocherer-type conjecture for paramodular forms.}  Int. J. Number Theory {\bf 7} (2011), 1395--1411.
\bibitem[Sc]{Sc}{R. Schmidt,} {\em On classical Saito-Kurokawa liftings}, J. Reine Agnew. Math. {\bf 604} (2007), 211--235. 
\bibitem[Sh]{Sh}{Shimura, G.:} {\em Introduction to the Arithmetic Theory of Automorphic Functions}, Princeton University Press, Princeton, New Jersey, 1971.
\end{thebibliography}
\end{document}